\newcommand{\bu}{\mathbf{u}}
\newcommand{\bv}{\mathbf{v}}
\newcommand{\bw}{\mathbf{w}}
\def\N{{\mathbb{N}}}
\def\Q{{\mathbb{Q}}}
\def\Z{{\mathbb{Z}}}
\DeclareMathOperator{\Apr}{Apr}
\DeclareMathOperator{\tor}{tor}
\DeclareMathOperator{\gp}{Grp}
\DeclareMathOperator{\lcm}{lcm}
\DeclareMathOperator{\rk}{rank}
\def\uRing#1{\kappa[\bu^{#1}]}
\def\fibersum#1#2#3{#1 \oplus_{#3} #2}  
\def\Fibersum#1#2#3{#1 \widetilde{\oplus}_{#3} #2}  
\def\tensor#1#2#3{#1 \otimes_{#3} #2}  
\def\Tensor#1#2#3{#1 \widetilde{\otimes}_{#3} #2}
\newtheorem{thm}{Theorem}[section]
\newtheorem{cor}[thm]{Corollary}
\newtheorem{lem}[thm]{Lemma}
\newtheorem{prop}[thm]{Proposition}
\newtheorem{notation}[thm]{Notation}
\theoremstyle{definition}
\newtheorem{defn}[thm]{Definition}
\newtheorem{example}[thm]{Example}
\begin{document}


\title[Affine Semigroup Algebras]{Affine Semigroup Algebras And Their Fibered Sums}
\author{C-Y. Jean Chan \and I-Chiau Huang \and Jung-Chen Liu}

\address{Department of Mathematics, Central Michigan University, Mt. Pleasant, MI 48859, U.S.A.}
\email{chan1cj@cmich.edu}

\address{Institute of Mathematics, Academia Sinica, Taipei, Taiwan.}
\email{ichuang@math.sinica.edu.tw}

\address{Department of Mathematics, National Taiwan Normal University, Taipei, Taiwan.}
\email{liujc@math.ntnu.edu.tw}

\begin{abstract}
We study affine semigroup rings as algebras over subsemigroup rings.
From this relative viewpoint with respect to a given subsemigroup ring, the fibered sum of two affine semigroup algebras is constructed. 
Such a construction is compared to the tensor product and to the classical gluings of affine semigroup rings as defined in Rosales\cite{ros:psN}.

While fibered sum can always be achieved, gluings of affine semigroup rings do not always exist. 
Therefore, we further investigate when the fibered sum of affine semigroup algebras gives rise to a gluing. 
A criterion is recovered in terms of the defining semigroups under which the gluing may take place. 
\end{abstract}

\keywords{affine semigroup, affine semigroup algebra, Ap\'{e}ry monomial, fibered sum, gluing}
\subjclass[2020]{
13F65; 
13B10; 
20M25; 
20M50
}

\maketitle


\section{Introduction}\label{sec:intro}


The main aim of this paper is to construct  fibered sums in the
category of affine semigroup rings  as our perception regarding gluing of affine semigroup rings. 
Our work is inspired by a question raised by Gimenez and Srinivasan~\cite{gim-srin:gswh} about when and how affine semigroup rings can be glued.
We recount the history of gluing in the literature below, following a brief introduction of affine semigroup rings.

In geometric perspective, an affine semigroup ring with the semigroup embedded in $\Z^d$ represents 
regular functions on an affine variety parametrized by $d$ variables.
For instance, numerical semigroup rings are associated with monomial curves. 
Higher dimensional affine semigroup rings can be in some cases associated with affine toric varieties.
Due to the multiple layers of interpretations, affine semigroup rings have been of great interests to many disciplines in
mathematics.

An affine semigroup ring is usually regarded as a finitely generated algebra over a field $\kappa$.
Besides such an absolute viewpoint, an affine semigroup ring can also be regarded as an algebra over another ring  
obtained from a subsemigroup.
The latter is what we call a relative viewpoint.
Works along this line among other topics have been done by the second author, Kim and
Jafari~\cite{hu-jaf:fnsg, hu-jaf:crnsa, hu-kim:nsa} in one-dimensional cases, that is,
for numerical semigroup rings.

In this paper, we implement our relative viewpoint  on affine semigroup rings to the concept of gluing, which was introduced originally in absolute cases.
Gluing of numerical semigroup rings in a special case appeared in Watanabe~\cite[Lemma~1]{wat:seodgd} following
the study of complete intersection numerical semigroup rings by Herzog~\cite{her:grassr}.
Delorme~\cite{del:smicn} treated gluing systematically for numerical semigroups and
obtained a thorough characterization for complete intersections.
Gluing was generalized to affine semigroup rings by Rosales~\cite{ros:psN}.
It was further studied in Gimenez and Srinivasan~\cite{gim-srin:gswh,gim-srin:gsNc}.

In the approach just mentioned above, gluing is defined for arbitrary two numerical semigroup rings.
However, gluing is not a general operation in higher dimensions; it exists only between those pairs satisfying certain conditions.
More precisely, let $S_1$ and $S_2$ be two semigroups in $\N^d$ for some positive $d$, and
consider their associated affine semigroup rings $\kappa[S_1]$ and $\kappa[S_2]$ over a field $\kappa$.
The gluing of $\kappa[S_1]$ and $\kappa[S_2]$ is defined if there exist two positive integers $a$ and $b$
such that
some specific identification holds between  $\kappa[aS_1]$ and $\kappa[bS_2]$
(see (\ref{glue_varphi}) and (\ref{glueKernel}) in Section~\ref{sec:gluing}), where
$aS_1$ is the rescaling of $S_1$ by the constant $a$, and similarly for $bS_2$.
In such a case, the gluing produces a larger affine semigroup ring $\kappa[a S_1 + b S_2]$,
which is not necessarily isomorphic to $\kappa[S_1+ S_2]$, even though $\kappa[aS_1]$ is  isomorphic to $\kappa[S_1]$ and so is $\kappa[bS_2]$ to $\kappa[S_2]$.
For example, the affine semigroup ring
$A=\kappa[x^3,x^2y,xy^2,y^3]$ can not be glued to itself as shown in
\cite[Example 1]{gim-srin:gswh}.
On the other hand,
$A$ has another two isomorphic presentations
$A_1=\kappa[x^3y^3,x^3y^2z,x^3yz^2,x^3z^3]$ and $A_2=\kappa[x^4,x^3y,x^2y^2,xy^3]$
(see Example~\ref{ex:2copies}).
The latter two copies $A_1$ and $A_2$ can be glued \cite[Example 7]{gim-srin:smfrsrog}.
It is crucial to observe that $A$ is a subring of a polynomial ring in two variables while a polynomial ring of three variables is required to house both $A_1$ and $A_2$ simultaneously.
This example illustrates that the existence of gluing depends on the relative position of the two rings, or precisely, how the affine semigroup rings are embedded.

Through another set of rings in Example~\ref{ex:normalcurve2} and the statements following it,
we will provide detailed explanations about the contrast of the phenomenon presented by $A$ versus $A_1$ and $A_2$ just described.
In particular, while an adjustment of rescaling by $a$ and $b$ is allowed for the purpose of possible gluing,
we see that if matrix multiplications are utilized instead of scalar multiplications,
then one can transform the affine semigroup rings into an appropriate ambient polynomial ring
where the gluing may take place. The resulting glued affine semigroup rings satisfy certain universal property. This leads to the fibered sum in our consideration.

In other words, the fibered sum of affine semigroup algebras constructed in this paper provides a more general perspective than gluing.
The consideration of algebras incorporates a coefficient ring which can be any affine subsemigroup ring in general.
Gluing is a special case of fibered sum where the coefficient ring is isomorphic to a polynomial ring in one variable.
We investigate also when the fibered sum of affine semigroup rings gives rise to a gluing.
An equivalent condition to the availability of gluing two affine semigroup rings with only scalar multiplications is worked out in Theorem~\ref{glueEqCond} recovering a criterion in \cite[Theorem 1.4]{ros:psN}.

We see in Theorem~\ref{thm:Fibersumetensor} that, under certain conditions, 
the fibered sum is the same as a tensor product. 
Discussions in this regard can be found in Examples~\ref{ex:notFlat} and \ref{ex:folding}.

Since affine semigroup rings can be viewed as
the exponential counterpart of affine semigroups,
our construction is built from the fibered sum of affine semigroups.
To make these statements precise, we begin with some definitions which will be used throughout the paper.

A {\em monoid} is a nonempty set with an associative binary operation and an identity element.
Since the inverse of any given element may not exist, a monoid is not necessarily a group.
The binary operator of a monoid under our consideration is always commutative.
An {\em affine semigroup} is a finitely generated monoid $S$ that satisfies the following intrinsic properties:
for any $s, s_1, s_2 \in S$,
\begin{enumerate}
\item[(i)] ({\em cancellative}) if $s + s_1 = s + s_2$, then $s_1 = s_2$;
\item[(ii)] ({\em torsion free}) if $n s_1 = n s_2$  in $S$ for some positive integer $n$, then $s_1 = s_2$.
\end{enumerate}
As seen in \cite[Corollary 3.4]{ros-gar:fgcm},
a finitely generated monoid is an affine semigroup if and only if it can be embedded in $\Z^d$
(equivalently in $\Q^d$) for some positive integer $d$.
We will see in Section~\ref{sec:monoid} that the cancellative and torsion free conditions are essential in many discussions.
As part of the definition, our affine semigroup always contains an identity element. 
Following the modern convention, we use $\N$ to denote the set of non-negative integers.

Let $\kappa$ be a field. Associated to an affine semigroup $S$, an {\em affine semigroup ring} is a 
$\kappa$-algebra generated by monomials with exponents in $S$.
More precisely, by embedding $S$ in $\Z^d$, we may identify each element $s \in S$ with a $d$-tuple $(s_1, \dots, s_d) \in \Z^d$
and consider the $\kappa$-algebra
\[\uRing S = \kappa [ \{ \bu^s=u_1^{s_1} \cdots u_d^{s_d} \mid s=(s_1, \dots, s_d) \in S\}] . \]
Obviously, the $\kappa$-algebra
$\uRing S$ is finitely generated and since it is a subring of the Laurent polynomial ring
$\kappa[ u_1^{\pm 1}, \dots, u_d^{\pm 1}]$, it is also an integral domain.
Henceforth, we will use $\kappa[\bu^S]$ to denote an affine semigroup ring instead of $\kappa[S]$.
It is essential to distinguish various embeddings of an affine semigroup ring.
The use of variables $\bu$ helps keep track of  changes of embeddings.
Such necessity will become clear in Sections~\ref{sec:algebra} and \ref{sec:gluing}, especially in the latter one. 

The paper is organized as follows:
Section~\ref{sec:monoid} first establishes fibered sums in the category of cancellative monoids.
A fibered sum of affine semigroups turns out to be a quotient of that of cancellative monoids.
Corollary~\ref{cor:emb} offers a glimpse of fibered sums in a special case: assuming that
$S_1$, $S_2$ and $S$ are subsemigroups contained in an affine semigroup $T$ with $S \subset S_1\cap S_2$,
then $S_1 + S_2$ in $T$ is the fibered sum of $S_1$ and $S_2$ over $S$ provided that $\gp(S_1) \cap \gp(S_2) = \gp(S)$, where
$\gp( \cdot )$ denotes the group generated by the argument in it.
The key fact that proves this statement is also used in proving Lemma~\ref{lem:gluingCond} and Theorem~\ref{glueEqCond} that provide a condition enabling the gluing to take place.

 An affine semigroup $S$ is {\em positive} if the only subgroup contained in $S$ is the trivial group.
In particular, a numerical semigroup is positive.
In Section~\ref{sec:Apery}, we extend the notion of Ap\'{e}ry elements from numerical semigroups to positive
affine semigroups.
The notion is relative in nature, with which we may talk about
representations of an element with respect to an affine
subsemigroup. This also leads to a criterion for flatness discussed in Section~\ref{sec:algebra}.

Built from the fibered sums of affine semigroups,
Section~\ref{sec:algebra} is dedicated to affine semigroup algebras and their fibered sums.
If $R'$ is a positive affine semigroup algebra over $R$, then, using Ap\'ery elements,
Theorem~\ref{prop1:freeflat} shows that $R'$ is free over $R$ if and only if it is flat over $R$.
We also explore the comparison between fibered sums and the usual tensor products.
 It is important to be reminded that the tensor product of two affine semigroup algebras is not necessarily an affine semigroup algebra.
Theorem~\ref{thm:Fibersumetensor} proves that under certain conditions, the fibered sum of affine semigroup algebras is isomorphic to the usual tensor product.

Section~\ref{sec:gluing} offers an overall comparison between fibered sums and the gluing defined by Rosales~\cite{ros:psN}.
As mentioned previously, a fibered sum is a more general notion and can be done between arbitrary
two affine semigroup algebras with a prescribed common coefficient ring.
Our study of fibered sums is inspired by the works of Gimenez and Srinivasan~\cite{gim-srin:gswh,gim-srin:gsNc}.
We discover that gluing, when available, satisfies certain universal property.
In fact, gluing is a special case of fibered sums when the coefficient ring is taken as a polynomial ring in one variable 
(see Theorem~\ref{thm:glueVfibersum}).
We also prove in Lemma~\ref{lem:gluingCond} that whether or not a gluing
exists depends on the relationship among the groups generated by the semigroups under consideration.


\section{Fibered Sums}\label{sec:monoid}


Through abelian groups, we construct and investigate fibered sums of cancellative monoids and those of affine semigroups in this section. In a category, the {\em fibered sum}
of morphisms $S\to S_1$ and $S\to S_2$ consists
of an object $S_1\sqcup_S S_2$ together with morphisms $S_1\to S_1\sqcup_S S_2$ and
$S_2\to S_1\sqcup_S S_2$ such that the diagram
\[
\xymatrix{
S_1  \ar[r]&S_1\sqcup_S S_2\\
S \ar[u]\ar[r] & S_2\ar[u]   }
\]
is commutative and universal in the following sense: for any commutative diagram
\[
\xymatrix{
S_1  \ar[r]&T\\
S \ar[u]\ar[r] & S_2\ar[u]   }
\]
in the category, there exists a unique morphism $S_1\sqcup_S S_2\to T$ making the
diagram
\[
\xymatrix{
&& T\\
S_1  \ar[r]\ar@/^1pc/[rru]&S_1\sqcup_S S_2\ar[ru]\\
S \ar[u]\ar[r] & S_2\ar[u]\ar@/_1pc/[ruu]   }
\]
commute. We also say that $S_1\sqcup_S S_2$ is the
{\em fibered sum of $S_1$ and $S_2$ over $S$.}

In this paper, a monoid $S$ with the associative binary operation $+$ and an identity
element $0$ is always assumed to be commutative.
A monoid is {\em cancellative} if
$s_1=s_2$ whenever $s_1+s_3=s_2+s_3$ for $s_1,s_2,s_3 \in S$.
Every cancellative monoid $S$ is naturally contained in its {\em group of differences}
$\gp(S)=\{s_1-s_2 \mid s_1,s_2\in S\}$ with the binary operation inherited from $S$.
A homomorphism $S\to S'$ of cancellative monoids induces a
homomorphism of abelian groups $\gp(S)\to\gp(S')$. We often denote the image of an element
$s\in S$ in $S'$ also by $s$. Within a commutative diagram of monoids, such abuse
of notation does not cause confusion.
A monoid $S$ is {\em torsion free} if for every $s_1,s_2\in S$ and positive integer $n$,
the condition $ns_1=ns_2$ implies $s_1=s_2$.
Given a monoid $S$, we define an equivalence relation: for $s_1,s_2\in S$,
\[
s_1\sim_{\tor} s_2 \iff  \text{ $\exists$   $n>0$ in $\Z$ such that } ns_1=ns_2.
\]
The quotient $S/ \sim_{\tor}$ is a torsion free monoid. If a monoid $S$ is
cancellative, then so is $S/\sim_{\tor}$.

\begin{example}
Any submonoid of the monoid $\N$ of nonnegative integers is finitely generated, cancellative and torsion free. 
The submonoid $\{(x,y) \mid x,y>0\}\cup\{(0,0)\}$ of
$\N^2$ is cancellative and torsion but not finitely generated.
\end{example}

An {\em affine semigroup} is a finitely generated monoid isomorphic to a submonoid of $\Q^d$. 
An affine semigroup that can be embedded into $\N$ is called a {\em numerical semigroup}.  
In the literature, the greatest common divisor of a numerical semigroup is usually assumed to be $1$. 
In our definition, we lift the restriction on the greatest common divisor to allow embeddings by rescaling.
Multiplied by an integer, an affine semigroup $\Q^d$ can be embedded into some $\Z^d$.
It is known that a finitely generated commutative monoid is an affine semigroup if and
only if it is cancellative and torsion free \cite[Corollary 3.4]{ros-gar:fgcm}.
For an affine semigroup $S$, the group $\gp(S)$ is a free abelian group,
whose rank is denoted by $\rk(S)$.

The first main result of this section is the existence of fibered sums in the category of cancellative monoids, and then by modulo
the equivalence relation $\sim_{\tor}$ defined above,
we obtain fibered sums of affine semigroups.
Let $S \to S_1$ and $S \to S_2$ be homomorphisms of cancellative monoids.
Regardless of $S$, the direct sum of $S_1$ and $S_2$ is denoted by $\fibersum{S_1}{S_2}{}$ as usual.
We shall construct a cancellative monoid $\fibersum{S_1}{S_2}{S}$, which is a quotient of  $\fibersum{S_1}{S_2}{}$, and show that it is a fibered sum in the category of cancellative monoids.
If the monoids $S, S_1, S_2$ are affine semigroups, we shall construct an affine semigroup $\Fibersum{S_1}{S_2}{S}$ and show that it is
a fibered sum in the category of affine semigroups.

Now, we begin to construct fibered sums in the category of cancellative monoids.
The homomorphisms $S\to S_1$ and $S\to S_2$ of cancellative monoids
 induce
the fibered sum $\gp(S_1)\sqcup_{\gp(S)}\gp(S_2)$ in the category of abelian groups
that is isomorphic to the direct sum $\gp(S_1)\oplus \gp(S_2)$ modulo the
subgroup consisting of elements $(s,-s)$, where $s\in\gp(S)$.

\begin{notation}\label{notation:oplus}
Let $S$, $S_1$ and $S_2$ be cancellative monoids.
The notation
$\fibersum{S_1}{S_2}{S}$ denotes the image of the
composition of the homomorphisms
\begin{equation}\label{eq:96881}
\fibersum{S_1}{S_2}{}\to \gp(S_1)\oplus \gp(S_2) \to
\gp(S_1)\sqcup_{\gp(S)}\gp(S_2).
\end{equation}
Given an element $(s_1,s_2)$ in the direct sum
$\fibersum{S_1}{S_2}{}$, its image in $\fibersum{S_1}{S_2}{S}$ is denoted by
$\fibersum{s_1}{s_2}{}$.
\end{notation}

 If $(s_1, s_2)$ and $(t_1,t_2)$ in $S_1 \oplus S_2$ have the same image, namely
$\fibersum{s_1}{s_2}{}=\fibersum{t_1}{t_2}{}$ in $\gp(S_1)\sqcup_{\gp(S)}\gp(S_2)$,
then we must have $(s_1, s_2) = (t_1, t_2) + (s, -s)$ in
$\gp(S_1)\oplus \gp(S_2)$ for some $s \in \gp(S)$. In this case, we may write $s = b-a$ for some $a,b \in S$, and have
\[
(s_1, s_2) + (a, -a) = (t_1, t_2) + ( b, -b)
\]
in $\gp(S_1)\oplus \gp(S_2)$, which is equivalent to
\[
(s_1, s_2) + (a, 0) + (0,b) = (t_1, t_2) + ( b, 0) + (0, a).
\]
Now the last equality also holds in $S_1 \oplus S_2$.
This gives the following equivalent conditions in terms of elements in the cancellative monoids:
\begin{equation}\label{eq:47125}
\fibersum{s_1}{s_2}{}=\fibersum{t_1}{t_2}{} \iff
\begin{cases}
s_1+a=t_1+b \quad \mbox{in } S_1 \\s_2+b=t_2+a \quad \mbox{in } S_2
\end{cases}
\mbox{for some $a,b\in S$.}
\end{equation}
In particular, $\fibersum{a}{0}{}=\fibersum{0}{a}{}$ in
$\fibersum{S_1}{S_2}{S}$ for any $a \in S$. In other words, the diagram
\[
\xymatrix{
S_1  \ar[r]&\fibersum{S_1}{S_2}{S}\\
S \ar[u]\ar[r] & S_2\ar[u]   }
\]
is commutative, where $S_i\to \fibersum{S_1}{S_2}{S}$ is the composition of the
canonical embedding $S_i\to \fibersum{S_1}{S_2}{}$ and the homomorphisms in (\ref{eq:96881}).

\begin{lem}\label{lem:fiberedSum}
In the category of cancellative monoids, given homomorphisms $S\to S_1$ and $S\to S_2$,
the monoid $\fibersum{S_1}{S_2}{S}$ together with the canonical homomorphisms $S_1\to \fibersum{S_1}{S_2}{S}$ and $S_2\to \fibersum{S_1}{S_2}{S}$ is the fibered sum of
$S_1$ and $S_2$ over $S$.
\end{lem}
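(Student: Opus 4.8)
The plan is to verify the three requirements directly: that $\fibersum{S_1}{S_2}{S}$ is a cancellative monoid, that the square with vertices $S$, $S_1$, $S_2$, $\fibersum{S_1}{S_2}{S}$ commutes, and that this square is universal. The first two points are essentially already in hand. By Notation~\ref{notation:oplus}, $\fibersum{S_1}{S_2}{S}$ is a submonoid of the abelian group $\gp(S_1)\sqcup_{\gp(S)}\gp(S_2)$, hence automatically cancellative; and commutativity of the square was checked above, where it was shown that $\fibersum{a}{0}{}=\fibersum{0}{a}{}$ for every $a\in S$. The essential tool for the remaining work is the criterion (\ref{eq:47125}), which describes precisely when two elements of $\fibersum{S_1}{S_2}{}$ have the same image in $\fibersum{S_1}{S_2}{S}$.

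For the universal property, let $T$ be a cancellative monoid equipped with homomorphisms $f_1\colon S_1\to T$ and $f_2\colon S_2\to T$ whose composites with $S\to S_1$ and $S\to S_2$ agree; write $\bar s$ for the common image in $T$ of an element $s\in S$. I first treat uniqueness. Every element of $\fibersum{S_1}{S_2}{S}$ has the form $\fibersum{s_1}{s_2}{}$, and since $(s_1,s_2)=(s_1,0)+(0,s_2)$ in $\fibersum{S_1}{S_2}{}$ and the maps in (\ref{eq:96881}) are homomorphisms, $\fibersum{s_1}{s_2}{}=\fibersum{s_1}{0}{}+\fibersum{0}{s_2}{}$ is the sum of the images of $s_1\in S_1$ and $s_2\in S_2$. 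Hence any homomorphism $\phi\colon\fibersum{S_1}{S_2}{S}\to T$ compatible with $f_1$ and $f_2$ must satisfy $\phi(\fibersum{s_1}{s_2}{})=f_1(s_1)+f_2(s_2)$, so $\phi$ is uniquely determined.

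It remains to show that this formula gives a well-defined monoid homomorphism, and this is the main point of the proof. Suppose $\fibersum{s_1}{s_2}{}=\fibersum{t_1}{t_2}{}$. By (\ref{eq:47125}) there are $a,b\in S$ with $s_1+a=t_1+b$ in $S_1$ and $s_2+b=t_2+a$ in $S_2$. Applying $f_1$ to the first equation and $f_2$ to the second, and using $f_1(a)=f_2(a)=\bar a$, $f_1(b)=f_2(b)=\bar b$, we add the two resulting equalities in $T$ to get
\[
f_1(s_1)+f_2(s_2)+\bar a+\bar b=f_1(t_1)+f_2(t_2)+\bar a+\bar b.
\]
Cancelling $\bar a+\bar b$ in the cancellative monoid $T$ yields $f_1(s_1)+f_2(s_2)=f_1(t_1)+f_2(t_2)$, so the formula is unambiguous. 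That $\phi$ is additive follows from $(s_1,s_2)+(t_1,t_2)=(s_1+t_1,s_2+t_2)$ together with the additivity of $f_1$ and $f_2$, and $\phi$ clearly sends the identity to the identity; finally, precomposing $\phi$ with $S_1\to\fibersum{S_1}{S_2}{S}$ sends $s_1\mapsto f_1(s_1)+f_2(0)=f_1(s_1)$, and symmetrically for $S_2$, so $\phi$ is compatible with the given maps. The only non-formal step is the well-definedness argument above, which is exactly where cancellativity of $T$ and the criterion (\ref{eq:47125}) are used; everything else is bookkeeping.
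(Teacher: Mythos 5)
Your proof is correct and follows essentially the same route as the paper's: cancellativity from the group quotient, uniqueness from the fact that the images of $S_1$ and $S_2$ generate, and well-definedness via the criterion (\ref{eq:47125}) together with cancellation of the images of $a$ and $b$ in $T$. The extra bookkeeping you include (additivity of $\phi$ and compatibility with the structure maps) is routine and left implicit in the paper.
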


\begin{proof}
We note that the cancellative property holds in $\fibersum{S_1}{S_2}{S}$  since it is a homomorphic image in a group.
Thus it belongs to the category of cancellative monoids.

To check that $\fibersum{S_1}{S_2}{S}$ is the fibered sum, we consider a commutative
diagram
\[
\xymatrix{
&& T\\
S_1  \ar[r]\ar@/^1pc/[rru]^{\varphi_1}&\fibersum{S_1}{S_2}{S} \\
S \ar[u]\ar[r] & S_2\ar[u]\ar@/_1pc/[ruu]_{\varphi_2}   }
\]
of cancellative monoids. Since $\fibersum{S_1}{S_2}{S}$ is generated by the images of
$S_1$ and $S_2$, there is at most one homomorphism $\fibersum{S_1}{S_2}{S}\to T$
that extends the above commutative diagram. 
To verify such a homomorphism is well-defined,
we need to show that
if $\fibersum{s_1}{s_2}{}=\fibersum{t_1}{t_2}{}$ in $\fibersum{S_1}{S_2}{S}$ then
\begin{equation}\label{eq:6884}
\varphi_1(s_1)+\varphi_2(s_2)=\varphi_1(t_1)+\varphi_2(t_2).
\end{equation}
There exist $a,b\in S$ such that $s_1+a=t_1+b$ and $s_2+b=t_2+a$. 
Applying $\varphi_1$ and $\varphi_2$ and summing up their images in $T$, we have
\begin{equation}\label{eq:4887}
\varphi_1(s_1)+\varphi_1(a)+\varphi_2(s_2)+\varphi_2(b)=
\varphi_1(t_1)+\varphi_1(b)+\varphi_2(t_2)+\varphi_2(a).
\end{equation}
From the commutative diagram, $\varphi_1(a)=\varphi_2(a)$ and
$\varphi_1(b)=\varphi_2(b)$. Since $T$ is cancellative, we may cancel out
images of $a$ and $b$ from (\ref{eq:4887}) to obtain the required identity
(\ref{eq:6884}).
\end{proof}

Next, we construct the fibered sum in the category of affine semigroups.  Let  $S\to S_1$ and $S\to S_2$ be homomorphisms of affine
semigroups. Since $\fibersum{S_1}{S_2}{S}$ is a finitely generated cancellative monoid,
its quotient $(\fibersum{S_1}{S_2}{S})/\sim_{\tor}$ is also a finitely generated cancellative monoid and thus
an affine semigroup since it is obviously torsion free.

\begin{notation}\label{notation:oplus+}
Let $S$, $S_1$ and $S_2$ be affine semigroups described above.
We use the notation
\[
\Fibersum{S_1}{S_2}{S} := (\fibersum{S_1}{S_2}{S})/\sim_{\tor}
\]
for the quotient of $\fibersum{S_1}{S_2}{S}$
modulo the equivalence relation.
Moreover, we write $\Fibersum{s_1}{s_2}{}$ for the image of $\fibersum{s_1}{s_2}{} \in \fibersum{S_1}{S_2}{S}$ in $\Fibersum{S_1}{S_2}{S}$.

\end{notation}

\begin{prop}\label{prop:UP}
Let $S$, $S_1$ and $S_2$ be as in Notation~\ref{notation:oplus+}.
The affine semigroup $\Fibersum{S_1}{S_2}{S}$
is the fibered sum of $S_1$ and $S_2$ over $S$ in the category of
affine semigroups.
\end{prop}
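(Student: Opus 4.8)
The plan is to deduce this from Lemma~\ref{lem:fiberedSum} together with the observation that passing to $(-)/\sim_{\tor}$ is the universal way to replace a cancellative monoid by a torsion free one. As already noted just before the statement, $\Fibersum{S_1}{S_2}{S}$ is a finitely generated, cancellative, torsion free monoid, hence an affine semigroup; composing the canonical maps $S_i\to\fibersum{S_1}{S_2}{S}$ with the quotient homomorphism $\fibersum{S_1}{S_2}{S}\to\Fibersum{S_1}{S_2}{S}$ produces homomorphisms $S_1\to\Fibersum{S_1}{S_2}{S}$ and $S_2\to\Fibersum{S_1}{S_2}{S}$, and the square over $S$ commutes because it already commutes one level down in $\fibersum{S_1}{S_2}{S}$. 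So the only real content is the universal property.

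First I would take an arbitrary affine semigroup $T$ together with homomorphisms $\varphi_1\colon S_1\to T$ and $\varphi_2\colon S_2\to T$ that agree on $S$. Since $T$ is in particular a cancellative monoid, Lemma~\ref{lem:fiberedSum} supplies a unique homomorphism $\psi\colon\fibersum{S_1}{S_2}{S}\to T$ whose restriction to each $S_i$ is $\varphi_i$. The key step is to check that $\psi$ factors through $\Fibersum{S_1}{S_2}{S}=(\fibersum{S_1}{S_2}{S})/\sim_{\tor}$: if $x\sim_{\tor}y$ in $\fibersum{S_1}{S_2}{S}$, say $nx=ny$ for some positive integer $n$, then $n\psi(x)=\psi(nx)=\psi(ny)=n\psi(y)$, and torsion freeness of $T$ forces $\psi(x)=\psi(y)$. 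Hence $\psi$ descends to a homomorphism $\bar\psi\colon\Fibersum{S_1}{S_2}{S}\to T$, which by construction satisfies: the composite $S_i\to\Fibersum{S_1}{S_2}{S}\to T$ (the second arrow being $\bar\psi$) equals $\varphi_i$ for $i=1,2$. Thus $\bar\psi$ extends the given commutative square.

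For uniqueness, I would note that $\Fibersum{S_1}{S_2}{S}$ is generated as a monoid by the images of $S_1$ and $S_2$, since the quotient map is surjective and $\fibersum{S_1}{S_2}{S}$ already has this property; therefore any homomorphism $\Fibersum{S_1}{S_2}{S}\to T$ making the outer triangles commute is determined on generators and must coincide with $\bar\psi$. This completes the verification of the universal property. I do not expect a genuine obstacle here: the only point requiring care is the factorization through $\sim_{\tor}$, and that is immediate precisely because the test object $T$, being an affine semigroup, is torsion free. If one prefers, this can be packaged as a standalone remark that for any cancellative monoid $M$ the map $M\to M/\sim_{\tor}$ is initial among homomorphisms from $M$ into torsion free monoids, after which the proposition follows formally by composing the two universal properties (that of $\fibersum{S_1}{S_2}{S}$ from Lemma~\ref{lem:fiberedSum} and that of the torsion free quotient).
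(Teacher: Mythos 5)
Your argument is correct and follows the same route as the paper: apply Lemma~\ref{lem:fiberedSum} in the category of cancellative monoids to obtain the unique map $\fibersum{S_1}{S_2}{S}\to T$, then use torsion freeness of $T$ to descend it through the quotient by $\sim_{\tor}$, with uniqueness coming from generation by the images of $S_1$ and $S_2$. You simply spell out the factorization step ($nx=ny\Rightarrow n\psi(x)=n\psi(y)\Rightarrow\psi(x)=\psi(y)$) that the paper leaves implicit.
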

\begin{proof}
Consider a commutative diagram
\[
\xymatrix{
&& T\\
S_1  \ar[r]\ar@/^1pc/[rru]&\Fibersum{S_1}{S_2}{S} \\
S \ar[u]\ar[r] & S_2\ar[u]\ar@/_1pc/[ruu]  }
\]
of affine semigroups. As cancellative monoids, there is a unique homomorphism
$\varphi\colon \fibersum{S_1}{S_2}{S}\to T$ making the diagram
\[
\xymatrix{
&& T\\
S_1  \ar[r]\ar@/^1pc/[rru]&\fibersum{S_1}{S_2}{S} \ar[ru]\\
S \ar[u]\ar[r] & S_2\ar[u]\ar@/_1pc/[ruu]  }
\]
commute. Since $T$ is torsion free, the homomorphism $\varphi$ induces a homomorphism
$\Fibersum{S_1}{S_2}{S}\to T$, which is the unique one making the diagram
\[
\xymatrix{
&& T\\
S_1  \ar[r]\ar@/^1pc/[rru]&\Fibersum{S_1}{S_2}{S} \ar[ru]\\
S \ar[u]\ar[r] & S_2\ar[u]\ar@/_1pc/[ruu]  }
\]
commute.
\end{proof}

With Proposition~\ref{lem:fiberedSum} and Proposition~\ref{prop:UP},
we now formally give Notation~\ref{notation:oplus} and Notation~\ref{notation:oplus+} their anticipated titles.

\begin{defn}\label{def:fibersum}
In the category of cancellative monoids, let $S$, $S_1$ and $S_2$ be objects with homomorphisms
$S \rightarrow S_1$ and $S \rightarrow S_2$.
The cancellative monoid $\fibersum{S_1}{S_2}{S}$ is called the {\em fibered sum of $S_1$ and $S_2$ over $S$}. 
Futhermore if $S$, $S_1$ and $S_2$ are affine semigroups, then
$\Fibersum{S_1}{S_2}{S}$ is called the {\em fibered sum of $S_1$ and $S_2$ over $S$} in the category of
affine semigroups.
\end{defn}

Properties of fibered sums depend on the underlying monoid $S$.

\begin{example}\label{ex:6Nvs12N}
It can be checked directly by the universal property that
$\fibersum{2\N}{3\N}{6\N} \simeq \langle 2,3\rangle\subset \N$. Moreover,
$\Fibersum{2\N}{3\N}{6\N} \simeq  \fibersum{2\N}{3\N}{6\N}$
since $\langle 2,3 \rangle$ is already torsion free.
On the other hand in $\fibersum{2\N}{3\N}{12\N}$, we have 
$\fibersum{6}{0}{} \neq \fibersum{0}{6}{}$ while $2 (\fibersum{6}{0}{}) = 2 (\fibersum{0}{6}{})$, so
$\fibersum{2\N}{3\N}{12\N}$ is not torsion free.  Hence, $\Fibersum{2\N}{3\N}{12\N} \not\simeq \fibersum{2\N}{3\N}{12\N}$.
\end{example}

If $S\to S_1$ and $S\to S_2$ are embeddings of affine semigroups, 
so are the canonical homomorphisms $S_1\to\Fibersum{S_1}{S_2}{S}$ and
$S_2\to\Fibersum{S_1}{S_2}{S}$. 
It is always true that  $\Fibersum{S_1}{S_2}{S}$
is the sum of the images of $S_1$ and $S_2$ under the canonical homomorphisms.
In such a case, $S_1$ and $S_2$ can be considered as submonoids of $\Fibersum{S_1}{S_2}{S}$.  
Furthermore,
embeddings of $S$ into $\Fibersum{S_1}{S_2}{S}$
via $S_i$ for $i=1, 2$ agree as well, so
$S$ can  be considered as a submonoid of $\Fibersum{S_1}{S_2}{S}$.
Through these embeddings, the intersection of $S_1$ and $S_2$ may take place
inside $\Fibersum{S_1}{S_2}{S}$. We may consider also the fibered sum of $S_1$ and $S_2$ over other affine semigroups such as $S_1 \cap S_2$ or any of its submonoids in $\Fibersum{S_1}{S_2}{S}$.
In fact by inspecting the universal property in the category of affine semigroups,
$\Fibersum{S_1}{S_2}{S}$ is also the fibered sum of $S_1$ and $S_2$ over any
submonoid of $S_1 \cap S_2$ containing $S$.
For instance,  $\Fibersum{2\N}{3\N}{12\N} \simeq  \Fibersum{2\N}{3\N}{6\N}$ in Example~\ref{ex:6Nvs12N}.
See Example~\ref{ex:algebra6Nvs12N} for the $\kappa$-algebra version of this statement.

Note that even if $S_1$, $S_2$ and $S$ are submonoids of an affine semigroup $T$, the fibered sum $\Fibersum{S_1}{S_2}{S}$ may not be embeddable into $T$. It may happen that $S_1+S_2$ taken in $T$ is not isomorphic to $\Fibersum{S_1}{S_2}{S}$. See Example~\ref{ex:51661}. The next proposition gives an intrinsic condition with which the fibered sum can be obtained inside a larger affine semigroup.

\begin{prop}\label{prop:58908}
Let $S\to S_1$ and $S\to S_2$ be embeddings of affine semigroups. Assume that
$S_1$ and $S_2$ have the same rank as $S$. If $S_1$ and $S_2$ embed
to an affine semigroup $T$, whose compositions following the above embeddings agree on $S$,  then $S_1+S_2$ taken in $T$ is isomorphic to $\Fibersum{S_1}{S_2}{S}$
\end{prop}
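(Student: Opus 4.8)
The plan is to exhibit an explicit isomorphism between $\Fibersum{S_1}{S_2}{S}$ and the submonoid $S_1+S_2$ of $T$, using the universal property of the fibered sum in one direction and the rank hypothesis in the other. First I would invoke the universal property of $\Fibersum{S_1}{S_2}{S}$: the embeddings $S_1\hookrightarrow T$ and $S_2\hookrightarrow T$ agree on $S$ and land inside the affine semigroup $S_1+S_2\subset T$, so they induce a canonical homomorphism $\psi\colon \Fibersum{S_1}{S_2}{S}\to S_1+S_2$. Because $\Fibersum{S_1}{S_2}{S}$ is generated by the images of $S_1$ and $S_2$ (as remarked just before the proposition), $\psi$ is surjective. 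It remains to prove that $\psi$ is injective.

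For injectivity, I would pass to groups of differences. Applying $\gp(\cdot)$ to everything, $\psi$ induces a group homomorphism $\gp(\Fibersum{S_1}{S_2}{S})\to\gp(S_1+S_2)\subset\gp(T)$. Now I would compute both sides' ranks. On the target side, since $S_1$ and $S_2$ have the same rank as $S$ and all sit inside $T$ with the $S$-parts identified, $\gp(S_1)+\gp(S_2)$ inside $\gp(T)$ has rank at most $\rk(S_1)+\rk(S_2)-\rk(\gp(S_1)\cap\gp(S_2))$; and since $\gp(S)\subseteq\gp(S_1)\cap\gp(S_2)$ and $\rk(S)=\rk(S_1)=\rk(S_2)$, all these ranks coincide, giving $\rk(S_1+S_2)=\rk(S)=\rk(S_1)$. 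On the source side, $\gp(\Fibersum{S_1}{S_2}{S})$ is a quotient of $\gp(S_1)\sqcup_{\gp(S)}\gp(S_2)=(\gp(S_1)\oplus\gp(S_2))/\{(s,-s):s\in\gp(S)\}$, which also has rank $\rk(S_1)+\rk(S_2)-\rk(S)=\rk(S)$ by the same count. So $\psi$ is a surjection of finitely generated free abelian groups of equal rank after passing to $\gp$, hence an isomorphism on the torsion-free quotients — but $\Fibersum{S_1}{S_2}{S}$ is already torsion free and embeds in its group of differences, and the same holds for $S_1+S_2\subset\gp(T)$, so $\psi$ is injective on the monoid level provided the induced map of groups is injective. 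To pin down injectivity rather than just equality of ranks, I would argue directly with the characterization (\ref{eq:47125}): if $\psi(\Fibersum{s_1}{s_2}{})=\psi(\Fibersum{t_1}{t_2}{})$, then $s_1+s_2=t_1+t_2$ in $T$ with $s_i,t_i$ in the respective copies; cancelling and using $\gp(S_1)\cap\gp(S_2)=\gp(S)$ (which follows from the equal-rank hypothesis, since $\gp(S_1)\cap\gp(S_2)$ is sandwiched between $\gp(S)$ and a group of rank $\rk(S)$, forcing equality up to finite index, and then torsion-freeness upgrades this to genuine equality) produces the required $a,b\in S$ witnessing $\Fibersum{s_1}{s_2}{}=\Fibersum{t_1}{t_2}{}$.

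The main obstacle I anticipate is the last step: promoting the rank equality $\rk(\gp(S_1)\cap\gp(S_2))=\rk(S)$ to the actual equality $\gp(S_1)\cap\gp(S_2)=\gp(S)$, and then converting the relation $s_1+s_2=t_1+t_2$ in $T$ into the explicit $a,b\in S$ demanded by (\ref{eq:47125}). The equal-rank hypothesis only gives that $\gp(S)$ has finite index in $\gp(S_1)\cap\gp(S_2)$ a priori; closing this gap cleanly will require the torsion-free and cancellative structure of the ambient semigroup $T$, exactly the "key fact" the introduction advertises as underlying Corollary~\ref{cor:emb}, Lemma~\ref{lem:gluingCond} and Theorem~\ref{glueEqCond}. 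I expect the cleanest route is to first establish $\gp(S_1)\cap\gp(S_2)=\gp(S)$ as a lemma (or cite it if proved earlier), then feed it into the elementary manipulation of (\ref{eq:47125}), so that the proof of Proposition~\ref{prop:58908} reduces to: construct $\psi$ by the universal property, observe surjectivity, and check injectivity by the group-intersection identity.
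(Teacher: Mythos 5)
Your construction of $\psi$ via the universal property and the observation that it is surjective match the paper. The problem is the injectivity step. Your ``main obstacle'' paragraph correctly identifies the crux --- that equal ranks only give $\gp(S)$ finite index in $\gp(S_1)\cap\gp(S_2)$ --- but your proposed resolution, that ``torsion-freeness upgrades this to genuine equality,'' is false. A finite-index inclusion of free abelian groups of equal rank need not be an equality ($2\Z\subset\Z$), and the paper's own examples refute the claim within the scope of this very proposition: in Example~\ref{example:35610}, $S_1=\langle 3,10\rangle$, $S_2=\langle 5,6\rangle$, $S=\langle 6,10\rangle$ all have rank $1$, yet $\gp(S)=2\Z\neq\Z=\gp(S_1)\cap\gp(S_2)$; similarly Example~\ref{ex:notPositive}. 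Indeed, if $\gp(S_1)\cap\gp(S_2)=\gp(S)$ did follow from the hypotheses, then by Lemma~\ref{lem:emb} (equivalently Proposition~\ref{cor:6964}) the untilded $\fibersum{S_1}{S_2}{S}$ would already embed into $T$ and coincide with $\Fibersum{S_1}{S_2}{S}$ --- exactly what fails in Example~\ref{example:35610}. So the $a,b\in S$ demanded by (\ref{eq:47125}) genuinely need not exist, and your direct argument cannot be completed as stated.

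The paper's proof circumvents this by exploiting the torsion-free quotient rather than trying to avoid it: the rank hypothesis gives an $n>0$ with $\gp(nS_1)\subset\gp(S)$ and $\gp(nS_2)\subset\gp(S)$, so from $s_1+s_2=t_1+t_2$ in $T$ one gets $ns_1-nt_1=nt_2-ns_2\in\gp(S)$, hence $\fibersum{ns_1}{ns_2}{}=\fibersum{nt_1}{nt_2}{}$ in $\fibersum{S_1}{S_2}{S}$; since $\Fibersum{S_1}{S_2}{S}$ is by definition the quotient by $\sim_{\tor}$, the factor $n$ can then be cancelled to give $\Fibersum{s_1}{s_2}{}=\Fibersum{t_1}{t_2}{}$. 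In other words, the finite-index discrepancy is absorbed by the tilde, not eliminated. I would add that the rank-counting portion of your second paragraph, which you then set aside, can actually be pushed to a complete alternative proof: both $\gp(\Fibersum{S_1}{S_2}{S})$ and $\gp(S_1+S_2)$ are free abelian of rank $\rk(S)$, a surjection of free abelian groups of equal finite rank has free kernel of rank zero and is therefore an isomorphism, and a cancellative monoid embeds in its group of differences, so $\psi$ is injective. Had you committed to that route instead of the $\gp(S_1)\cap\gp(S_2)=\gp(S)$ claim, the argument would have been correct, though different in flavor from the paper's.
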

\begin{proof}
The point is that the condition on the ranks is equivalent to
$\gp(nS_1)\subset\gp{S}$ and $\gp{(nS_2)}\subset\gp{S}$ for some $n>0$.
We regard $S_1$, $S_2$ and $S$ as subsets of $T$. The universal property
of fibered sum gives a map $\Fibersum{S_1}{S_2}{S} \to S_1+S_2$ with $\Fibersum{s_1}{s_2}{} \mapsto s_1+s_2$, which
is clearly onto. To show that the map is one-to-one, we consider elements
$\Fibersum{s_1}{s_2}{}$ and $\Fibersum{t_1}{t_2}{}$ in $\Fibersum{S_1}{S_2}{S}$ such that $s_1+s_2=t_1+t_2$.
With the same $n$ above, $ns_1-nt_1=nt_2-ns_2 \in \gp{S}$. Then $\fibersum{ns_1}{ns_2}{} = \fibersum{nt_1}{nt_2}{}$ and this implies $\Fibersum{s_1}{s_2}{} = \Fibersum{t_1}{t_2}{}$.
\end{proof}

\begin{cor}\label{numericalFibersum}
Let $S_1$ and $S_2$ be numerical semigroups in $\N$. Let $S$ be a subsemigroup
contained in $S_1 \cap S_2$. Then $S_1+S_2$ is isomorphic to $\Fibersum{S_1}{S_2}{S}$.
\end{cor}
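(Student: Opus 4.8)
The plan is to deduce Corollary~\ref{numericalFibersum} directly from Proposition~\ref{prop:58908} by checking that the hypotheses of that proposition are met in the numerical-semigroup setting. First I would note that numerical semigroups $S_1$ and $S_2$ are, by definition, subsemigroups of $\N$, so they sit inside the affine semigroup $T=\N$, with the inclusions $S_1\hookrightarrow\N$ and $S_2\hookrightarrow\N$ automatically agreeing on the common subsemigroup $S\subset S_1\cap S_2$. Thus the ambient semigroup $T=\N$ into which both embed, compatibly over $S$, is supplied for free.

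The one nontrivial point is the rank hypothesis: Proposition~\ref{prop:58908} requires $\rk(S_1)=\rk(S_2)=\rk(S)$. Here every nonzero subsemigroup of $\N$ generates a finite-index subgroup of $\Z$, hence has group of differences equal to a rank-one free abelian group; so $\rk(S)=\rk(S_1)=\rk(S_2)=1$ as long as $S$ is nontrivial. I would dispose of the degenerate case $S=\{0\}$ separately: then $\gp(S)=0$ has rank $0$, but one checks (say via the universal property, or by the computation in the proof of Proposition~\ref{prop:58908} with a suitable $n$, or simply because $S_1$ and $S_2$ are already torsion free with $\gp(S_1)\cap\gp(S_2)$ of finite index in $\Z$) that $S_1+S_2$ still realizes the fibered sum $\Fibersum{S_1}{S_2}{S}$. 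Alternatively, and more cleanly, I would observe that $\Fibersum{S_1}{S_2}{S}$ is the fibered sum of $S_1$ and $S_2$ over \emph{any} submonoid of $S_1\cap S_2$ containing $S$ --- as remarked in the discussion preceding Proposition~\ref{prop:58908} --- so it suffices to treat the case where $S$ is replaced by a nontrivial subsemigroup of $S_1\cap S_2$, for instance $\langle m\rangle$ for any common nonzero element $m\in S_1\cap S_2$ (such an $m$ exists because $S_1$ and $S_2$ both have finite complement in $\N$, so their intersection is nonempty and unbounded); this brings us squarely into the rank-one situation covered by Proposition~\ref{prop:58908}.

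So the argument runs: pick a nonzero $m\in S_1\cap S_2$, set $S'=\langle m\rangle\subset S_1\cap S_2$, note $S\subset S'$ so $\Fibersum{S_1}{S_2}{S}\simeq\Fibersum{S_1}{S_2}{S'}$, check $\rk(S_1)=\rk(S_2)=\rk(S')=1$, apply Proposition~\ref{prop:58908} with $T=\N$ to conclude $\Fibersum{S_1}{S_2}{S'}\simeq S_1+S_2$ as computed inside $\N$, and combine. The main (and only) obstacle worth any care is the rank condition and the boundary case $S=\{0\}$; both are handled by the reduction to a cyclic subsemigroup just described, after which everything is a direct invocation of the earlier proposition. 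I would keep the write-up to a few sentences, since no genuinely new computation is needed beyond what Proposition~\ref{prop:58908} already supplies.
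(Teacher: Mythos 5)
Your main route --- all three semigroups sit inside $T=\N$ with compatible inclusions, and (for $S\neq\{0\}$) all three have rank one, so Proposition~\ref{prop:58908} applies verbatim --- is exactly the derivation the paper intends; the corollary carries no written proof precisely because this is immediate.

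The problem is your handling of the degenerate case $S=\{0\}$: both of your proposed fixes are wrong, and in fact the statement itself is false there, so the only correct move is to note that the corollary (like the rest of the paper, which freely writes $\gcd S$) tacitly assumes $S$ nontrivial. Concretely, if $S=\{0\}$ then the relation~(\ref{eq:47125}) forces $a=b=0$, so $\fibersum{S_1}{S_2}{S}$ is the full direct sum $S_1\oplus S_2$; this is already torsion free, hence equals $\Fibersum{S_1}{S_2}{S}$, and it has rank $2$ whenever $S_1,S_2\neq\{0\}$, whereas $S_1+S_2\subset\N$ has rank $1$. Taking $S_1=S_2=\langle 2,3\rangle$ gives an explicit counterexample; the failure of injectivity of $(s_1,s_2)\mapsto s_1+s_2$ is exactly the failure of the condition $\gp(S_1)\cap\gp(S_2)=\gp(S)$ of Lemma~\ref{lem:emb}, which can never hold with $\gp(S)=0$ and $S_1,S_2$ nonzero. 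This also invalidates your ``cleaner'' reduction to $S'=\langle m\rangle$: the remark preceding Proposition~\ref{prop:58908} only allows enlarging $S$ to submonoids of $S_1\cap S_2$ \emph{computed inside} $\Fibersum{S_1}{S_2}{S}$, and for $S=\{0\}$ the images of $S_1$ and $S_2$ in $S_1\oplus S_2$ meet only in $0$, so no nonzero $m$ is available; indeed $\Fibersum{S_1}{S_2}{\{0\}}$ and $\Fibersum{S_1}{S_2}{\langle m\rangle}$ have different ranks and are not isomorphic. So keep your first paragraph, delete the degenerate-case discussion, and simply record that $S$ is assumed nonzero (whence $\rk(S)=\rk(S_1)=\rk(S_2)=1$ and Proposition~\ref{prop:58908} finishes the proof).
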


\begin{example}
Let $S_1=\langle 2,5\rangle$ and $S_2=\langle 3,4,5\rangle$ be numerical semigroups in $\N$.
Let $S$ be a subsemigroup in $S_1 \cap S_2 =\langle 4, 5, 6, 7 \rangle$. Then
$\Fibersum{S_1}{S_2}{S}=S_1+ S_2=\langle 2,3\rangle$.
\end{example}

Without the conditions on the ranks of affine semigroups,
Proposition~\ref{prop:58908} does not hold in general.

\begin{example}\label{ex:51661}
Affine semigroups  $S_1=\langle(1,0),(1,1)\rangle$ and $S_2=\langle(1,1),(0,1)\rangle$ in $\N^2$ have rank 2. 
Their intersection $S = \langle (1,1) \rangle$ has rank 1. 
Using the homomorphisms $S_1\to \N^3$ and $S_2\to\N^3$ given by $(1,0)\mapsto(1,0,0)$,
$(1,1)\mapsto(0,1,0)$ and $(0,1)\mapsto(0,0,1)$, 
we have $\fibersum{S_1}{S_2}{S}\simeq\N^3$ by the universal property. 
Furthermore, $\fibersum{S_1}{S_2}{S}$ is isomorphic to $\Fibersum{S_1}{S_2}{S}$ since $\N^3$ is torsion free.
Taken in $\N^2$, the affine semigroup $S_1+S_2$ is $\N^2$, which is not isomorphic to $\Fibersum{S_1}{S_2}{S}$. 
\end{example}

Later in Section 5, we shall compare the fiber sum $\Fibersum{S_1}{S_2}{S}$ defined in this section with gluing.

\begin{lem}\label{lem:emb}
Let $S_1$, $S_2$ and $S$ be submonoids of a cancellative monoid $T$
such that $S\subset S_1\cap S_2$. The following two conditions are equivalent.
\begin{enumerate}[\rm (i)]\label{lem:60544}
\item
$\gp(S_1)\cap\gp(S_2)=\gp(S)$ in $\gp(T)$.
\item
The homomorphism $\fibersum{S_1}{S_2}{S}\to T$ obtained by the universal property
sending $\fibersum{s_1}{s_2}{}$ to $s_1+s_2$ is one-to-one.
\end{enumerate}
\end{lem}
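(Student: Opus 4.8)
The plan is to prove the two implications separately, using the explicit description of equality in $\fibersum{S_1}{S_2}{S}$ recorded in (\ref{eq:47125}). Throughout I regard $S$, $S_1$, $S_2$ as submonoids of $T$, so that $\gp(S)\subset\gp(S_1)\cap\gp(S_2)$ holds automatically inside $\gp(T)$; the content of (i) is the reverse inclusion. Write $\psi\colon\fibersum{S_1}{S_2}{S}\to T$ for the homomorphism of (ii), $\psi(\fibersum{s_1}{s_2}{})=s_1+s_2$.

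For the direction (i)$\Rightarrow$(ii), suppose $\psi(\fibersum{s_1}{s_2}{})=\psi(\fibersum{t_1}{t_2}{})$, i.e.\ $s_1+s_2=t_1+t_2$ in $T$. Rearranging in $\gp(T)$ gives $s_1-t_1=t_2-s_2$; the left side lies in $\gp(S_1)$ and the right side in $\gp(S_2)$, so by (i) this common element, call it $g$, lies in $\gp(S)$. Write $g=b-a$ with $a,b\in S$. Then $s_1-t_1=b-a$ forces $s_1+a=t_1+b$ in $\gp(S_1)$, but since all four terms lie in the cancellative monoid $S_1$ this is an equality in $S_1$; likewise $t_2-s_2=b-a$ gives $s_2+b=t_2+a$ in $S_2$. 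By (\ref{eq:47125}) these are exactly the conditions guaranteeing $\fibersum{s_1}{s_2}{}=\fibersum{t_1}{t_2}{}$, so $\psi$ is injective.

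For the direction (ii)$\Rightarrow$(i), I argue contrapositively — or directly by exhibiting a collision. Take $g\in\gp(S_1)\cap\gp(S_2)$; I must show $g\in\gp(S)$. Write $g=s_1-t_1$ with $s_1,t_1\in S_1$ and $g=t_2-s_2$ with $s_2,t_2\in S_2$ (every element of $\gp(S_i)$ is a difference of elements of $S_i$). Then in $T$ we have $s_1+s_2 = t_1+t_2 + (g) + (-g)\cdots$ — more cleanly, $s_1 - t_1 = t_2 - s_2$ rearranges to $s_1 + s_2 = t_1 + t_2$ in $\gp(T)$, and since all terms are in $T$ and $T$ is cancellative, this is an equality in $T$. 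Hence $\psi(\fibersum{s_1}{s_2}{}) = \psi(\fibersum{t_1}{t_2}{})$, so by injectivity $\fibersum{s_1}{s_2}{}=\fibersum{t_1}{t_2}{}$, and (\ref{eq:47125}) yields $a,b\in S$ with $s_1+a=t_1+b$ and $s_2+b=t_2+a$. From the first equation, $g=s_1-t_1=b-a\in\gp(S)$, as required.

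The argument is almost entirely a matter of unwinding (\ref{eq:47125}), so I do not anticipate a genuine obstacle; the one point needing a word of care is the passage from an equation in $\gp(S_i)$ (or $\gp(T)$) back to an equation in the monoid $S_i$ (or $T$), which is legitimate precisely because these monoids are cancellative and embed in their groups of differences — this is where the hypothesis "submonoids of a cancellative monoid" is used. I would state that reduction once at the outset and then apply it freely.
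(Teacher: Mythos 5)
Your proof is correct and follows essentially the same route as the paper's: both arguments reduce everything to the characterization of equality in $\fibersum{S_1}{S_2}{S}$ given in (\ref{eq:47125}) and the observation that an element of $\gp(S_1)\cap\gp(S_2)$ written as $s_1-t_1=t_2-s_2$ lies in $\gp(S)$ exactly when it equals $b-a$ for some $a,b\in S$. The only difference is organizational — you split the chain of equivalences into two separate implications — and your explicit remark about passing between equalities in $\gp(T)$ and in the cancellative submonoids is a sound articulation of a step the paper leaves implicit.
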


\begin{proof}
Since the inclusion $\gp(S)\subset\gp(S_1)\cap\gp(S_2)$ always holds,  it suffices to verify $\gp(S_1)\cap\gp(S_2)\subset \gp(S)$ for condition (i). The latter inclusion means that  $s_1-t_1=t_2-s_2$ with $s_1,t_1\in S_1$ and $s_2,t_2\in S_2$ implies $s_1-t_1=t_2-s_2=b-a$
for some $a,b\in S$.
The last two equalities are equivalent to
$s_1 + a = t_1 + b$ in $S_1$ and $s_2 + b = t_2 + a$ in $S_2$, that is,
$\fibersum{s_1}{s_2}{} = \fibersum{t_1}{t_2}{}$ by definition as seen in (\ref{eq:47125}).
In other words, this is the same as asking if $s_1+s_2=t_1+t_2$ implies $\fibersum{s_1}{s_2}{} = \fibersum{t_1}{t_2}{}$.
Therefore $\gp(S_1)\cap\gp(S_2)\subset \gp(S)$ is equivalent to that $\fibersum{S_1}{S_2}{S}\to T$
given by $\fibersum{s_1}{s_2}{}\mapsto s_1+s_2$  is a one-to-one homomorphism.
\end{proof}

The second condition in Lemma~\ref{lem:emb}
implies that  $\fibersum{S_1}{S_2}{S}$ is isomorphic to $S_1 + S_2$ which
is a subsemigroup of $T$. In the case where $T$ is an affine semigroup and $S_1, S_2, S$ are affine subsemigroups of $T$,
it follows that $S_1 + S_2$ is also an affine subsemigroup since $T$ is torsion free.
This implies $\fibersum{S_1}{S_2}{S}$ is already torsion free and so is isomorphic to the fibered sum
$\Fibersum{S_1}{S_2}{S}$. 
Therefore, we have the following corollary.
\begin{cor}\label{cor:emb}
If $S_1$, $S_2$ and $S$ are affine subsemigroups of an affine semigroup $T$
such that $S\subset S_1\cap S_2$ and $\gp(S_1)\cap\gp(S_2)=\gp(S)$ in $\gp(T)$,
then $\Fibersum{S_1}{S_2}{S} \simeq \fibersum{S_1}{S_2}{S} \simeq S_1 + S_2$.
\end{cor}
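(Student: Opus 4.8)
The plan is to obtain the corollary as a short chain of two isomorphisms, each essentially prepared by the preceding material. First I would establish the right-hand isomorphism $\fibersum{S_1}{S_2}{S}\simeq S_1+S_2$. Since $S\subset S_1\cap S_2$, Lemma~\ref{lem:emb} applies, and the hypothesis $\gp(S_1)\cap\gp(S_2)=\gp(S)$ in $\gp(T)$ is precisely condition~(i) there; hence condition~(ii) holds, i.e.\ the homomorphism $\fibersum{S_1}{S_2}{S}\to T$ produced by the universal property from the two embeddings $S_1\to T$ and $S_2\to T$, sending $\fibersum{s_1}{s_2}{}$ to $s_1+s_2$, is one-to-one. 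Its image is by construction the submonoid $S_1+S_2$ of $T$, so this map is an isomorphism onto $S_1+S_2$.

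Next I would establish the left-hand isomorphism $\Fibersum{S_1}{S_2}{S}\simeq\fibersum{S_1}{S_2}{S}$ by showing the source monoid is already torsion free, so that passing to the quotient by $\sim_{\tor}$ changes nothing. Here one uses that $T$ is an affine semigroup, hence torsion free, and that any submonoid of a torsion free monoid is again torsion free; in particular $S_1+S_2\subset T$ is torsion free. Transporting this property along the isomorphism just obtained, $\fibersum{S_1}{S_2}{S}$ is torsion free, the relation $\sim_{\tor}$ on it is trivial, and the canonical surjection $\fibersum{S_1}{S_2}{S}\to(\fibersum{S_1}{S_2}{S})/\sim_{\tor}=\Fibersum{S_1}{S_2}{S}$ is an isomorphism. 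Composing, $\Fibersum{S_1}{S_2}{S}\simeq\fibersum{S_1}{S_2}{S}\simeq S_1+S_2$; and since $S_1+S_2$ is finitely generated, cancellative and torsion free, it is itself an affine semigroup, so this is a chain of isomorphisms of affine semigroups.

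There is no substantive obstacle here: the content has been absorbed into Lemma~\ref{lem:emb} and into the construction of $\Fibersum{S_1}{S_2}{S}$ as a torsion-free quotient. What remains is only bookkeeping — checking that the hypothesis $S\subset S_1\cap S_2$ puts us in the setting of Lemma~\ref{lem:emb}, and recalling that torsion freeness is inherited by submonoids and transported along isomorphisms, both immediate from the definitions. If anything is delicate, it is keeping straight that the universal-property map out of $\fibersum{S_1}{S_2}{S}$ and the one out of $\Fibersum{S_1}{S_2}{S}$ are compatible via the quotient, which is exactly what the proof of Proposition~\ref{prop:UP} already records.
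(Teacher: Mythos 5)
Your argument is correct and follows exactly the route the paper takes: Lemma~\ref{lem:emb} gives that the universal-property map $\fibersum{S_1}{S_2}{S}\to T$ is injective with image $S_1+S_2$, and torsion freeness of $T$ then forces $\fibersum{S_1}{S_2}{S}$ to be torsion free, so the quotient by $\sim_{\tor}$ is an isomorphism. Nothing is missing.
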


\begin{example}
In $\N^3$, let  $S_1=\langle (4,0,0),(3,1,0),(2,2,0),(1,3,0)\rangle$ and
$S_2=\langle (3,3,0),(3,2,1),(3,1,2),(3,0,3)\rangle$
as considered in \cite[Example~1]{gim-srin:gswh} and \cite[Example~7]{gim-srin:smfrsrog}. Then
$\gp(S_2)=\{(a+c,a,c)\mid a,c\in\Z \text { and }  a+c \text{ is divisible by } 3\}$. Note that
$(0,4,0)=(2,2,0)+(2,2,0)-(4,0,0)$. Hence
$\gp(S_1)=\{(a,b,0)\mid a,b\in\Z \text { and } a+b \text { is divisible by } 4\}$. Elements of
$\gp(S_1)\cap\gp(S_2)$ are of the form $(a,a,0)$, where $a\in\Z$ satisfies
$3 \,|\, a$ and $2  \,|\,  a$. Hence
$\gp(S_1)\cap\gp(S_2)=\gp(S)$ in $\Z^3$, where $S=\langle(6,6,0)\rangle$.
Thus by Lemma~\ref{lem:emb}, $\fibersum{S_1}{S_2}{S}$ can be embedded in $\mathbb N^3$.
This shows that $\fibersum{S_1}{S_2}{S}$ is torsion free. Hence it is already an affine semigroup
and we have $\fibersum{S_1}{S_2}{S} =  \Fibersum{S_1}{S_2}{S} $.
In fact, from Corollary~\ref{cor:flat} and Lemma~\ref{lem:05664}, we will see that
$\kappa[S_1] \otimes_{\kappa[S]} \kappa[S_2]$ is an affine semigroup ring and that
$\fibersum{S_1}{S_2}{S}$ is the semigroup corresponding to it.
\end{example}

Next example shows that even if $\fibersum{S_1}{S_2}{S}$ does not embed into $T$ as stated in Lemma~\ref{lem:emb}, it is still possible that $\Fibersum{S_1}{S_2}{S}$ embeds into $T$.

\begin{example}\label{ex:notPositive}
In $\Z^2$, let $S_1=\langle(1,-1),(0,1)\rangle$, $S_2=\langle(1,1), (0,-1)\rangle$ and
$S= \langle(1,1), (1,-1)\rangle$. Then $\gp(S_1)=\gp(S_2)=\Z^2$
and
\[
\gp(S)=\{(a,b)\in\Z^2 \mid \text{$a+b$ is even}\}\neq\gp(S_1)\cap\gp(S_2).
\]
By Lemma \ref{lem:emb}, $\fibersum{S_1}{S_2}{S}$ does not embed into $\Z^2$.
However, by Proposition \ref{prop:58908}, $\Fibersum{S_1}{S_2}{S}=\langle(1,0),(0,1),(0,-1)\rangle$ which can be embedded into $\Z^2$.
\end{example}

With canonical embeddings from $S_1$ and $S_2$ to $\Fibersum{S_1}{S_2}{S}$,
Lemma~\ref{lem:60544} for $T=\Fibersum{S_1}{S_2}{S}$ gives rise to the following proposition.

\begin{prop}\label{cor:6964}
Let $S\subset S_1$ and $S\subset S_2$ be affine semigroups. The canonical
map $\fibersum{S_1}{S_2}{S}\to\Fibersum{S_1}{S_2}{S}$ is an isomorphism
if and only if $\gp(S_1)\cap\gp(S_2)=\gp(S)$ in $\gp(\Fibersum{S_1}{S_2}{S})$.
\end{prop}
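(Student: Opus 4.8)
The plan is to apply Lemma~\ref{lem:emb} (equivalently its ``numbered'' label \ref{lem:60544}) directly with $T = \Fibersum{S_1}{S_2}{S}$, after first checking that the setup of that lemma is met. To do this, I would begin by recalling from the discussion preceding the proposition that when $S \to S_1$ and $S \to S_2$ are embeddings of affine semigroups, the canonical homomorphisms $S_i \to \Fibersum{S_1}{S_2}{S}$ are again embeddings, and that the two induced embeddings of $S$ into $\Fibersum{S_1}{S_2}{S}$ agree. Hence we may regard $S_1$, $S_2$ and $S$ as submonoids of the cancellative monoid $T = \Fibersum{S_1}{S_2}{S}$ with $S \subset S_1 \cap S_2$, which is exactly the hypothesis of Lemma~\ref{lem:emb}.

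Next I would identify the map in question. The canonical map $\fibersum{S_1}{S_2}{S} \to \Fibersum{S_1}{S_2}{S}$ sends $\fibersum{s_1}{s_2}{}$ to $\Fibersum{s_1}{s_2}{} = \Fibersum{s_1}{0}{} + \Fibersum{0}{s_2}{}$, which, under the identification of $S_1$ and $S_2$ as submonoids of $T$, is precisely $s_1 + s_2$ computed inside $T$. This is the map to which condition (ii) of Lemma~\ref{lem:emb} refers: the homomorphism $\fibersum{S_1}{S_2}{S} \to T$ given by $\fibersum{s_1}{s_2}{} \mapsto s_1 + s_2$. Since this map is always surjective (because $\Fibersum{S_1}{S_2}{S}$ is the sum of the images of $S_1$ and $S_2$), it is an isomorphism if and only if it is injective, which by Lemma~\ref{lem:emb} is equivalent to $\gp(S_1) \cap \gp(S_2) = \gp(S)$ computed in $\gp(T) = \gp(\Fibersum{S_1}{S_2}{S})$. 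This yields both directions of the stated equivalence at once.

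The one point that needs a little care — and the step I expect to be the main (albeit mild) obstacle — is confirming that the hypotheses of Lemma~\ref{lem:emb} genuinely hold for $T = \Fibersum{S_1}{S_2}{S}$, i.e.\ that $S_1$, $S_2$ are embedded and their restrictions to $S$ coincide. This is asserted in the paragraph following Corollary~\ref{cor:emb}, but that paragraph invokes it as a known fact rather than proving it; so in a fully rigorous write-up I would either cite the earlier discussion or briefly justify injectivity of $S_i \to \Fibersum{S_1}{S_2}{S}$. Injectivity of, say, $S_1 \to \Fibersum{S_1}{S_2}{S}$ can be seen from the fact that $\fibersum{s_1}{0}{} = \fibersum{t_1}{0}{}$ in $\fibersum{S_1}{S_2}{S}$ forces, via~(\ref{eq:47125}), $s_1 + a = t_1 + b$ in $S_1$ and $b = a$ in $S_2$ for some $a, b \in S$; since $S \hookrightarrow S_2$ is an embedding this gives $a = b$ in $S$, hence $s_1 = t_1$ by cancellation in $S_1$ — and passing to the torsion-free quotient does not destroy this because, again, $S_1$ is already torsion free as an affine semigroup (a short argument using that $n s_1 = n t_1$ in $\Fibersum{S_1}{S_2}{S}$ lifts to $m n s_1 = m n t_1$ in $\fibersum{S_1}{S_2}{S}$ for some $m > 0$). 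With that in place, the proposition is an immediate specialization of Lemma~\ref{lem:emb}, combined with the surjectivity observation to upgrade ``one-to-one'' to ``isomorphism.''
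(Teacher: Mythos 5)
Your proposal is correct and follows exactly the paper's route: the paper derives this proposition in a single sentence by applying Lemma~\ref{lem:emb} with $T=\Fibersum{S_1}{S_2}{S}$, which is precisely your argument. The extra care you take in verifying that $S_1$, $S_2$ and $S$ genuinely embed into $\Fibersum{S_1}{S_2}{S}$ (so that the hypotheses of the lemma apply) is a sound elaboration of what the paper leaves implicit in the discussion following Corollary~\ref{cor:emb}.
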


We end this section by applying Lemma~\ref{lem:60544} to the numerical situations
in Corollary~\ref{numSemigp}. Note that a numerical semigroup in $\N$ has finite complement if
and only if its greatest common divisor is $1$ \cite[Lemma~2.1]{ros-gar:ns}.
For such a numerical semigroup, its group of differences is exactly $\Z$.
By rescaling the semigroups, it follows that if $S$ is a numerical semigroup in $\N$ with the greatest common divisor $d$,
then $dn \in S$ for all $n \gg 0$ and $\gp(S)=d\Z$.
The following lemma summarizes a few key
properties that hold for numerical semigroups but are not always true for affine semigroups of higher ranks.
In particular, the condition on groups of differences in Lemma~\ref{lem:60544} and Proposition~\ref{cor:6964} can be stated in terms of greatest common divisors in numerical situations.

\begin{lem}\label{lem:gcdsemigp}
Let $S_1$ and $S_2$ be numerical semigroups in $\N$. Then
$\gp(S_1 \cap S_2) = \gp(S_1) \cap \gp(S_2)$ always holds.
Moreover, if $S$ is a numerical semigroup contained in $S_1\cap S_2$,
then the following coditions are equivalent
\begin{enumerate}[\rm (i)]
\item $\gp(S) = \gp(S_1) \cap \gp(S_2)$.
\item $ \gcd S=\gcd(S_1 \cap S_2)$.
\end{enumerate}
\end{lem}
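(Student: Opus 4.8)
The plan is to prove both assertions of Lemma~\ref{lem:gcdsemigp} by exploiting the fact that a numerical semigroup $S_i$ with greatest common divisor $d_i$ contains all sufficiently large multiples of $d_i$, so its group of differences is exactly $d_i\Z$. First I would establish the identity $\gp(S_1\cap S_2)=\gp(S_1)\cap\gp(S_2)$. The inclusion ``$\subseteq$'' is automatic since $S_1\cap S_2\subseteq S_i$ forces $\gp(S_1\cap S_2)\subseteq\gp(S_i)$ for each $i$. For the reverse inclusion, set $d_i=\gcd S_i$, so $\gp(S_i)=d_i\Z$ and hence $\gp(S_1)\cap\gp(S_2)=d_1\Z\cap d_2\Z=\ell\Z$ where $\ell=\lcm(d_1,d_2)$. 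The key point is that $S_1\cap S_2$ contains $n\ell$ for all $n\gg 0$: indeed $n\ell$ is a multiple of $d_1$ and lies in $S_1$ once $n$ is large, and likewise lies in $S_2$ once $n$ is large, so for $n$ past both thresholds $n\ell\in S_1\cap S_2$. Consequently $\gp(S_1\cap S_2)$ contains the difference of two such consecutive large multiples, namely $\ell$, so $\ell\Z\subseteq\gp(S_1\cap S_2)$, giving equality.

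Next I would derive the equivalence of (i) and (ii). Given the identity just proved, condition (i) reads $\gp(S)=\gp(S_1\cap S_2)$. Now $S$ is a numerical semigroup with $\gp(S)=(\gcd S)\Z$ and $S_1\cap S_2$ is a numerical semigroup (it is a submonoid of $\N$, finitely generated as it contains a finite-complement tail) with $\gp(S_1\cap S_2)=(\gcd(S_1\cap S_2))\Z$. Two subgroups $e\Z$ and $f\Z$ of $\Z$ with $e,f>0$ coincide if and only if $e=f$; applying this with $e=\gcd S$ and $f=\gcd(S_1\cap S_2)$ immediately yields $\gp(S)=\gp(S_1\cap S_2)\iff \gcd S=\gcd(S_1\cap S_2)$, which is exactly (i)$\iff$(ii). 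The inclusion $S\subseteq S_1\cap S_2$ guarantees $\gcd(S_1\cap S_2)\mid\gcd S$, but this divisibility is not even needed for the biconditional; it only reassures us the hypothesis is consistent.

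The main obstacle, modest as it is, is the first inclusion $d_1\Z\cap d_2\Z\subseteq\gp(S_1\cap S_2)$: one must be careful that $S_1\cap S_2$ is genuinely large enough to witness all of $\lcm(d_1,d_2)\Z$, which is why I would phrase the argument via ``all sufficiently large multiples of $\lcm(d_1,d_2)$ lie in $S_1\cap S_2$'' rather than trying to produce generators explicitly. Once that tail is in hand, every step reduces to the elementary classification of subgroups of $\Z$. I would also remark in passing that $\gp(S_1\cap S_2)=\gp(S_1)\cap\gp(S_2)$ is precisely the ``$T=\N$'' instance of the condition appearing in Lemma~\ref{lem:60544} and Proposition~\ref{cor:6964}, so this lemma is exactly what lets the earlier group-theoretic criterion be restated via greatest common divisors in the numerical case.
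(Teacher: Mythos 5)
Your proposal is correct and follows essentially the same route as the paper's proof: both identify $\gp(S_i)=d_i\Z$, show that all sufficiently large multiples of $\lcm(d_1,d_2)$ lie in $S_1\cap S_2$ so that the lcm itself arises as a difference of two consecutive such multiples, and then deduce the equivalence of (i) and (ii) from the fact that two subgroups $e\Z$ and $f\Z$ of $\Z$ with $e,f>0$ coincide exactly when $e=f$. The only cosmetic difference is that the paper phrases the final step via mutual divisibility of $d$ and $\gcd(S_1\cap S_2)$, whereas you invoke the subgroup classification directly; both are equivalent one-line arguments.
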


\begin{proof}
For $i = 1,2$, let $d_i = \gcd(S_i)$, and let $d = \lcm(d_1, d_2)$.  Then $\gp(S_i) = d_i\Z$ and $\gp(S_1) \cap \gp(S_2) = d\Z$.
Since $\gp(S_1 \cap S_2) \subseteq \gp(S_1) \cap \gp(S_2)$, $\gcd(S_1 \cap S_2)$ is divisible by $d$.  On the other hand, $nd \in S_1\cap S_2$ for all $n \gg 0$ since
$nd_1 \in S_1$ and $nd_2 \in S_2$ for all $n \gg 0$.  By choosing a large enough $n$, we have
$d=(n+1)d-nd\in\gp(S_1 \cap S_2)$ and so $d$ is divisible by $\gcd(S_1 \cap S_2)$. Hence,
$\gcd(S_1 \cap S_2)=d$, or equivalently, $\gp(S_1) \cap \gp(S_2) = \gp(S_1 \cap S_2)$.  
The equivalent conditions follows directly from this.
\end{proof}

\begin{cor}\label{numSemigp}
Let $S_1$ and $S_2$ be numerical semigroups in $\N$.
Let $S$ be a subsemigroup
contained in $S_1 \cap S_2$.  Then $\gcd S=\gcd(S_1 \cap S_2)$
if and only if $\Fibersum{S_1}{S_2}{S} = \fibersum{S_1}{S_2}{S}$.
\end{cor}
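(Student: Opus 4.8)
The plan is to deduce Corollary~\ref{numSemigp} by combining Proposition~\ref{cor:6964} with Lemma~\ref{lem:gcdsemigp}. Both ingredients are already available in the excerpt, so the proof is essentially a matter of stringing them together and checking that the group-of-differences condition that appears in Proposition~\ref{cor:6964} is the one governed by greatest common divisors.

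First I would invoke Proposition~\ref{cor:6964}: the canonical map $\fibersum{S_1}{S_2}{S}\to\Fibersum{S_1}{S_2}{S}$ is an isomorphism if and only if $\gp(S_1)\cap\gp(S_2)=\gp(S)$ inside $\gp(\Fibersum{S_1}{S_2}{S})$. So the statement $\Fibersum{S_1}{S_2}{S}=\fibersum{S_1}{S_2}{S}$ is equivalent to this equality of groups. Next I would translate the group condition into a gcd condition via Lemma~\ref{lem:gcdsemigp}, which tells us that for numerical semigroups $S_1,S_2$ in $\N$ with $S\subset S_1\cap S_2$, the conditions $\gp(S)=\gp(S_1)\cap\gp(S_2)$ and $\gcd S=\gcd(S_1\cap S_2)$ are equivalent. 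Chaining these two equivalences yields exactly the claim.

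The one point that requires a moment of care is that Proposition~\ref{cor:6964} phrases the group equality inside $\gp(\Fibersum{S_1}{S_2}{S})$, whereas Lemma~\ref{lem:gcdsemigp} works with $\gp(S_1),\gp(S_2)$ as subgroups of $\Z=\gp(\N)$. I would note that when $S_1,S_2$ are numerical semigroups in $\N$, the canonical maps $S_1,S_2\to\Fibersum{S_1}{S_2}{S}$ are embeddings (as observed in the discussion following Corollary~\ref{cor:emb}, since $S\to S_1$ and $S\to S_2$ are embeddings), and by Corollary~\ref{numericalFibersum} we even have $\Fibersum{S_1}{S_2}{S}\simeq S_1+S_2$, another numerical semigroup in $\N$. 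Under this identification the subgroups $\gp(S_1),\gp(S_2),\gp(S)$ of $\gp(\Fibersum{S_1}{S_2}{S})$ are precisely the subgroups of $\Z$ studied in Lemma~\ref{lem:gcdsemigp}, so the two formulations of the group condition coincide. With that compatibility noted, the equivalence $\gcd S=\gcd(S_1\cap S_2)\iff \gp(S)=\gp(S_1)\cap\gp(S_2)\iff \Fibersum{S_1}{S_2}{S}=\fibersum{S_1}{S_2}{S}$ follows immediately.

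I do not anticipate a genuine obstacle here; the only thing to get right is the bookkeeping of which ambient group each $\gp(\cdot)$ lives in, and that is handled by the embedding remark above. Everything else is a direct citation of results already proved in the excerpt.
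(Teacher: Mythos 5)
Your proposal is correct and follows essentially the same route as the paper's own proof, which likewise chains Lemma~\ref{lem:gcdsemigp} (gcd condition $\iff$ group-of-differences condition) with Proposition~\ref{cor:6964} (group condition $\iff$ the canonical map $\fibersum{S_1}{S_2}{S}\to\Fibersum{S_1}{S_2}{S}$ is an isomorphism). Your additional remark identifying $\gp(\Fibersum{S_1}{S_2}{S})$ with a subgroup of $\Z$ via Corollary~\ref{numericalFibersum} is a point the paper leaves implicit, but it does not change the argument.
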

\begin{proof}
First note that, by Lemma~\ref{lem:gcdsemigp}, $\gcd S = \gcd(S_1 \cap S_2)$ is equivalent to $\gp(S) = \gp(S_1) \cap \gp(S_2)$. And by Proposition~\ref{cor:6964}, the latter is equivalent to
that the canonical homomorphism $\fibersum{S_1}{S_2}{S} \to \Fibersum{S_1}{S_2}{S}$ is an isomorphism,
which means that $\fibersum{S_1}{S_2}{S}$ is torsion free and hence
$\fibersum{S_1}{S_2}{S} = \Fibersum{S_1}{S_2}{S}$ by the construction of
$\Fibersum{S_1}{S_2}{S}$.
\end{proof}

\begin{example}\label{example:35610}
Consider the numerical semigroups $S_1=\langle 3, 10 \rangle$, $S_2=\langle 5, 6 \rangle$, and $S=\langle 6, 10 \rangle$ in $\N$.
Then $\Fibersum{S_1}{S_2}{S}=S_1+S_2 = \langle 3, 5 \rangle$
by Corollary~\ref{numericalFibersum}.  Furthermore, by Corollary~\ref{numSemigp}, $\Fibersum{S_1}{S_2}{S} \neq \fibersum{S_1}{S_2}{S}$, since $\gcd S \neq \gcd(S_1 \cap S_2)$. 
(c.f. Example~\ref{ex:notFlat}.)
\end{example}

\section{Ap\'{e}ry elements}\label{sec:Apery}

We call an affine semigroup {\em positive} if $0$ is the only invertible element.
Positive affine semigroups are exactly finitely generated monoids that
can be embedded into some $\N^d$ (see \cite[Theorem 7.3]{gril:cs}).
Numerical semigroups are always positive.

In this section, we define Ap\'ery elements extending the notion of Ap\'ery numbers from numerical semigroups.
For an algebra defined by positive affine semigroups, we characterize if it is free or flat using Ap\'ery elements
 in Theorem~\ref{prop1:freeflat}.
As seen in Example \ref{ex:notPositive}, the fibered sum of positive affine semigroups may not be positive.
 We end the section by presenting some conditions under which the positivity is preserved by taking fibered sums.

Let $S'$ be a monoid and $S$ be its submonoid. An element of $S'$ is
called {\em Ap\'{e}ry} over $S$ if it can not be written as $s+w$ for any
$w\in S'$ and $0\neq s\in S$. If $S'$ is a numerical semigroup and $0\neq n\in S'$,
an Ap\'{e}ry element of $S'$ over $n\N$ is called an Ap\'{e}ry number
over $n$ in the literature. A {\em representation} of an element of $S'$
over $S$ is an expression of the element of the form $s+w$, where
$s\in S$ and $w$ is an Ap\'{e}ry element of $S'$ over $S$.
If $S'$ is positive, all of its elements have a representation. Without positivity,
Ap\'{e}ry elements may not exist.
We write $\Apr(S'/S)$ for the set of Ap\'{e}ry elements of $S'$ over $S$.

\begin{example}
The affine semigroup $\Z$ does not have Ap\'{e}ry elements over $\N$.
\end{example}

\begin{lem}\label{lem:uniqueRepr}
If $S \subset S'$ are positive affine semigroups and $S \simeq \N$, then
every element of $S'$ has a unique representation over $S$.
\end{lem}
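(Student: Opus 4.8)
The plan is to exploit the fact that $S \simeq \N$ means $S = n\N$ for a single generator $n$ sitting inside $\gp(S') $ (after embedding $S'$ in some $\N^d$), so that divisibility by $n$ in the semigroup is governed by a single integral linear condition. Fix an embedding $S' \hookrightarrow \N^d$, and let $n \in S'$ be the generator corresponding to $1 \in \N$ under the isomorphism $S \simeq \N$; thus $S = \{0, n, 2n, 3n, \dots\}$. Since $S'$ is positive, every $w' \in S'$ admits \emph{some} representation $w' = kn + w$ with $w \in \Apr(S'/S)$: indeed, keep subtracting $n$ as long as the result stays in $S'$; this terminates because $S'$ is positive (embedded in $\N^d$, so one cannot subtract $n$ indefinitely and remain in $\N^d$), and the final element is by construction Ap\'ery over $S$. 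So existence is the easy half and I would dispatch it in one or two sentences.

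For uniqueness, suppose $w' = k_1 n + w_1 = k_2 n + w_2$ with $w_1, w_2 \in \Apr(S'/S)$ and, without loss of generality, $k_1 \geq k_2$. Cancelling (valid since $S'$ is cancellative) gives $(k_1 - k_2) n + w_1 = w_2$ in $S'$. If $k_1 > k_2$, this exhibits $w_2$ as $s + w$ with $0 \neq s = (k_1-k_2)n \in S$ and $w = w_1 \in S'$, contradicting that $w_2$ is Ap\'ery over $S$. Hence $k_1 = k_2$, and then cancelling $k_1 n$ yields $w_1 = w_2$. That is the whole argument.

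The key steps, in order: (1) rephrase the hypothesis $S \simeq \N$ as $S = n\N$ for a fixed $n \in S'$; (2) establish existence of a representation by a descent/termination argument using positivity of $S'$ (embed in $\N^d$ so that repeated subtraction of $n$ must halt); (3) prove uniqueness by cancelling the smaller multiple of $n$ and observing that any leftover positive multiple of $n$ would violate the Ap\'ery condition on one of the two Ap\'ery parts.

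The only subtle point — and the thing worth stating carefully rather than the "main obstacle," since the lemma is genuinely short — is the termination in step (2): one must note that positivity of $S'$ rules out an infinite chain $w', w'-n, w'-2n, \dots$ of elements of $S'$, which is where the hypothesis that $S'$ (not just $S$) is a \emph{positive} affine semigroup is used. Everything else is a direct application of the cancellative property together with the definition of Ap\'ery elements, so no further machinery from the earlier sections is needed.
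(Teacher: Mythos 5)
Your proposal is correct and follows essentially the same route as the paper: the uniqueness argument (cancel the smaller multiple of the generator and invoke the Ap\'ery property of $w_2$ to force the leftover multiple to vanish) is exactly the paper's proof. The only cosmetic difference is that you spell out the existence half via a descent in $\N^d$, whereas the paper disposes of existence by the general remark, stated earlier in Section~3, that every element of a positive $S'$ has a representation over any submonoid.
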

\begin{proof}
The proof for the case of numerical semigroups \cite[Corollary 2.2]{hu-kim:nsa} carries over
to this lemma.
More precisely, let $S = \langle s \rangle$ and let $w_1, w_2 \in \Apr(S'/S)$.  Suppose
\[
\ell_1 s + w_1 = \ell_2 s + w_2
\]
in $S'$ with $\ell_1, \ell_2 \in \N$.  Without loss of generality, we may assume
$\ell_1 \geq \ell_2$.  Note that
$(\ell_1 - \ell_2)s \in S$ since $\ell_1 - \ell_2 \in \N$.  Hence, we have
\[
(\ell_1 - \ell_2)s + \ell_2 s + w_1 = \ell_2 s + w_2
\]
which implies
\[
(\ell_1 - \ell_2)s + w_1 = w_2,
\]
for $S'$ is a cancellative monoid.
However, $w_2 \in \Apr(S'/S)$, so $(\ell_1 - \ell_2)s = 0$.  This implies $w_1 = w_2$ and $\ell_1 s = \ell_2 s$.
\end{proof}

Unique representation is of special interest because we will see later that in the case of affine semigroup algebras, this is equivalent to flatness as well as to freeness.

\begin{lem}
Let $S\to S_1$ and $S\to S_2$ be embeddings of affine semigroups.
If $S_2$ is positive and every element of $S_2$ has a unique representation over $S$, then every
element of $\fibersum{S_1}{S_2}{S}$ has a unique representation over $S_1$.
\end{lem}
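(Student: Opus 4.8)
The plan is to take an arbitrary element $\fibersum{s_1}{s_2}{}$ of $\fibersum{S_1}{S_2}{S}$ (with $s_1\in S_1$, $s_2\in S_2$) and exhibit a representation over $S_1$, then prove uniqueness. For existence, I would first apply the hypothesis to $s_2\in S_2$ to write $s_2 = s + w$ with $s\in S$ and $w\in\Apr(S_2/S)$. Using the defining relation $\fibersum{a}{0}{}=\fibersum{0}{a}{}$ for $a\in S$ (established in the excerpt right before Lemma~\ref{lem:fiberedSum}), I get $\fibersum{s_1}{s_2}{} = \fibersum{s_1}{s+w}{} = \fibersum{s_1+s}{w}{}$, so $s_1+s\in S_1$ is the candidate $S_1$-part and $\fibersum{0}{w}{}$ is the candidate Apéry part. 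The claim I need for existence is that $\fibersum{0}{w}{}\in\Apr(\fibersum{S_1}{S_2}{S}/S_1)$ whenever $w\in\Apr(S_2/S)$; I expect this to follow from the case analysis on when $\fibersum{t_1}{t_2}{}=\fibersum{u_1}{u_2}{}$ given by (\ref{eq:47125}).

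For uniqueness, suppose $\fibersum{s_1}{w}{} = \fibersum{t_1}{v}{}$ in $\fibersum{S_1}{S_2}{S}$ with $s_1,t_1\in S_1$ and $w,v\in S_2$ Apéry over $S$ (note every Apéry element of the fibered sum over $S_1$ has the form $\fibersum{0}{w}{}$ with $w\in\Apr(S_2/S)$, since any $\fibersum{s_1'}{s_2'}{}$ can be rewritten as above and the $S_1$-part can be peeled off — this reduction should be recorded first). By (\ref{eq:47125}) there exist $a,b\in S$ with $s_1+a = t_1+b$ in $S_1$ and $w+b = v+a$ in $S_2$. The second equation, read inside $S_2$ together with unique representation of $S_2$ over $S$: since $b,a\in S$ and $w,v\in\Apr(S_2/S)$, the expressions $b+w$ and $a+v$ are both representations over $S$ of the same element of $S_2$, so uniqueness in $S_2$ forces $a=b$ and $w=v$. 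Feeding $a=b$ back into the first equation and cancelling (using that $S_1$ is cancellative) gives $s_1=t_1$. Hence the representation over $S_1$ is unique.

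The main obstacle I anticipate is the existence half, specifically verifying that $\fibersum{0}{w}{}$ is genuinely Apéry over $S_1$ — i.e. that it cannot be written as $\fibersum{s_1'}{s_2'}{} + (s,0)$ for some $0\neq s\in S_1$ — and making sure positivity of $S_2$ is used correctly (it guarantees $\Apr(S_2/S)$ is nonempty and that representations exist, and it rules out degenerate cancellations like $a = 0$ forcing nothing). If $\fibersum{0}{w}{} = \fibersum{s}{w'}{}$ with $s\in S_1$, $s\neq 0$, and $w'\in S_2$, then by (\ref{eq:47125}) there are $a,b\in S$ with $0+a = s+b$ in $S_1$ and $w+b = w'+a$ in $S_2$; the first says $a = s+b$, and since $S_1$ is positive and $s\neq 0$ we get $a\neq 0$; then in $S_2$ the element $w+b$ has a representation $w'+a$ over $S$ with $a\neq 0$ in $S$, but unique representation in $S_2$ combined with $w\in\Apr(S_2/S)$ should yield a contradiction once one checks the $S$-parts. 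This last step is where I would spend the most care; everything else is a direct unwinding of (\ref{eq:47125}) and the unique-representation hypothesis.
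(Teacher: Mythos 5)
Your proof is correct and follows essentially the same route as the paper's: decompose $s_2=a+w$ with $a\in S$ and $w\in\Apr(S_2/S)$ to write every element as $\fibersum{(s_1+a)}{w}{}$, then apply the characterization (\ref{eq:47125}) together with unique representation in $S_2$ to force $a=b$ and $w_1=w_2$, and cancel in $S_1$ to get equality of the $S_1$-parts. The one place where you go beyond the paper --- verifying that $\fibersum{0}{w}{}$ is genuinely Ap\'ery over $S_1$ --- is well spotted: the paper asserts the ``if and only if'' there without argument, and your observation that this step needs positivity of $S_1$ (which is not among the stated hypotheses, though it holds in every application the paper makes) is accurate, since without it the Ap\'ery set over $S_1$ could be empty and no representation would exist at all.
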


\begin{proof}
Consider $\fibersum{s_1}{s_2}{}\in \fibersum{S_1}{S_2}{S}$. If $s_2=a+w$ for some
$a\in S$ and $w\in S_2$, then $\fibersum{s_1}{s_2}{}=(s_1+a)\oplus w$.
Since $S_2$ is positive, every element of $\fibersum{S_1}{S_2}{S}$ can be
written as $s\oplus w$ for some $s\in S_1$ and $w\in\Apr(S_2/S)$.
We claim that $\Apr( (\fibersum{S_1}{S_2}{S})/S_1)$ consists of elements in the form
of $\fibersum{0}{w}{}$ with $w \in \Apr(S_2/S)$.
If $\fibersum{t_1 }{w_1}{}=\fibersum{t_1'}{w_2}{}$ with $ t_1, t_1' \in S_1$ and
$w_1,w_2\in\Apr(S_2/S)$, then $t_1'+a=t_1+b$ in $S_1$ and $w_2+ b=w_1+ a$ in $S_2$ for some $a,b\in S$.
Since every element of $S_2$ has a unique representation over $S$,
we must have $w_1=w_2$ and $a=b$, which imply $t_1 = t_1'$. As a consequence,
$\fibersum{0}{w}{}\in\Apr((\fibersum{S_1}{S_2}{S})/S_1)$ if and only if $w\in\Apr(S_2/S)$.
Furthermore, the above argument also proves that
every element of $\fibersum{S_1}{S_2}{S}$ has a unique
representation over $S_1$.
\end{proof}

\begin{lem}{\label{liujc}}
Let $S \to S_1$ and $S \to S_2$ be embeddings of positive cancellative monoids.  Suppose $S$ is numerical.  If $\fibersum{s_1}{s_2}{} = 0$ in $\fibersum{S_1}{S_2}{S}$, then $s_i = 0$ in $S_i$ for each $i$.
\end{lem}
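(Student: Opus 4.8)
The plan is to unwind the hypothesis through the combinatorial description~(\ref{eq:47125}) of equality in $\fibersum{S_1}{S_2}{S}$, and then to pit the positivity of $S_1$ and $S_2$ against the cofiniteness enjoyed by a numerical semigroup. First I would apply~(\ref{eq:47125}) to $\fibersum{s_1}{s_2}{}=\fibersum{0}{0}{}$, which produces elements $a,b\in S$ with $s_1+a=b$ in $S_1$ and $s_2+b=a$ in $S_2$. This reduces everything to showing $a=b$: once that is known, cancellativity of $S_1$ and of $S_2$ forces $s_1=0$ and $s_2=0$ at once. Since $S$ is numerical I would regard it as a submonoid of $\N$ and set $d=\gcd(S)$, so that every element of $S$ is a multiple of $d$ and $kd\in S$ for all $k\gg0$, as recalled in the discussion preceding Lemma~\ref{lem:gcdsemigp}.

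To obtain $a=b$ I would argue by contradiction. The roles of $(S_1,s_1)$ and $(S_2,s_2)$ in the hypothesis are symmetric, and interchanging them interchanges $a$ and $b$ in these two equations, so I may assume $b>a$ in $\N$. Then $b-a$ is a positive multiple of $d$, hence $c:=N(b-a)\in S$ for a suitable $N\geq1$, and the identity $Nb=Na+c$ holds in $S$ and therefore in $S_1$ and in $S_2$. Adding $N$ copies of $s_2+b=a$ then gives $Ns_2+Na+c=Na$ in $S_2$; cancelling $Na$ leaves $Ns_2+c=0$, so $c$ has an additive inverse in $S_2$, and positivity of $S_2$ forces $c=0$, which is impossible since $N\geq1$ and $b>a$. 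Hence $a=b$, and the lemma follows.

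I expect the whole argument to be short, with the one genuinely load-bearing ingredient being the hypothesis that $S$ is numerical: it is precisely the fact that some positive multiple of any given element of $S$ again lies in $S$ that lets the cancellation above manufacture an invertible element, so that positivity can be invoked. This hypothesis cannot be dispensed with: for $S$ of rank $\geq 2$ the conclusion can fail, for instance with $S=\N^2$, $S_1=\langle(1,0),(0,1),(-1,1)\rangle$ and $S_2=\langle(1,0),(0,1),(1,-1)\rangle$, where $\fibersum{(-1,1)}{(1,-1)}{}=0$ in $\fibersum{S_1}{S_2}{S}$ even though neither summand vanishes.
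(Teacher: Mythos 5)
Your proof is correct, and it opens exactly as the paper's does: apply (\ref{eq:47125}) to $\fibersum{s_1}{s_2}{}=\fibersum{0}{0}{}$ to obtain $a,b\in S$ with $s_1+a=b$ in $S_1$ and $s_2+b=a$ in $S_2$. Where you diverge is in how the one-dimensionality of $S$ is then exploited. The paper fixes an embedding $\varphi\colon S\to\N$, sets $k=\varphi(a)$ and $\ell=\varphi(b)$ with (say) $k\geq\ell$, and uses the cross-multiplication identity $\ell a=kb$ (both sides map to $k\ell$ under the injective $\varphi$); writing $ka=(k-\ell)a+\ell a$ in $k(a+s_1)=kb=\ell a$ and cancelling $\ell a$ gives $(k-\ell)a+ks_1=0$, after which positivity and a short case analysis ($k=0$ versus $s_1=0$) finish the argument. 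You instead prove $a=b$ outright by manufacturing $c=N(b-a)\in S$ for $N\gg0$ and cancelling down to $Ns_2+c=0$. Both routes are sound; the paper's is marginally leaner in what it uses, needing only that $S$ embeds in $\N$, whereas yours additionally invokes the fact that all sufficiently large multiples of $\gcd(S)$ lie in $S$ --- a fact the paper records just before Lemma~\ref{lem:gcdsemigp}, so nothing is missing. Your closing counterexample with $S=\N^2$ checks out (both $S_1$ and $S_2$ there are positive, and $\fibersum{(-1,1)}{(1,-1)}{}=0$ via $\alpha=(1,0)$, $\beta=(0,1)$) and is a worthwhile addition: it shows the numerical hypothesis is genuinely load-bearing, which the paper does not illustrate for this particular lemma.
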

\begin{proof}
Since $s_1 \oplus s_2 = 0 \oplus 0$, there exist $a,b \in S$ such that
\[
\left\{\begin{array}{ll} s_1 + a = 0 + b & \mbox{ in } S_1 \\ s_2 + b = 0 + a & \mbox{ in } S_2. \end{array} \right.
\]
Let $\varphi : S \to \N$ be an embedding and let $k = \varphi(a)$ and $\ell = \varphi(b)$.  Without loss of generality, we assume that $k \geq \ell$ and let $m:=k-\ell$.
Note that $\ell a = k b$ because $\varphi(\ell a) = \ell \varphi(a) = \ell k = k \ell = k \varphi(b) = \varphi(k b)$ and $\varphi$ is one-to-one.  Moreover, in $S_1$,
\[
ma + ks_1 + \ell a = (m+\ell)a + k s_1 = ka + ks_1 = k(a+s_1) = kb = \ell a .
\]
Since $S_1$ is cancellative, $ma + ks_1 = 0$.  Since $S_1$ is positive, $ma = ks_1 = 0$ and so $k = 0$ or $s_1 = 0$.

If $k = 0$, then $a = 0$ and so $s_2 + b = 0$.  Since $S_2$ is positive, $s_2 = b = 0$ in $S_2$.  This implies $s_1 + a = 0$ in $S_1$.  By the positivity of $S_1$, $s_1 = 0$.
If $s_1 = 0$, then $a = b$ in $S_1$.  Since $s_2 + b = a$ in $S_2$ and $S_2$ is cancellative, $s_2 = 0$.
\end{proof}

\begin{prop}{\label{positive}}
For embeddings $S\to S_1$ and $S\to S_2$ of positive cancellative monoids,
the fibered sum $\fibersum{S_1}{S_2}{S}$ is positive under either of
the following conditions.
\begin{itemize}
\item[(i)] Every element of $S_2$ has a unique representation over~$S$.
\item[(ii)] $S$ is numerical.
\end{itemize}
\end{prop}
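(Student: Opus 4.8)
The plan is to show that the identity $0$ is the only invertible element of $\fibersum{S_1}{S_2}{S}$ in each case. Suppose $\fibersum{s_1}{s_2}{}$ is invertible in $\fibersum{S_1}{S_2}{S}$, so there is $\fibersum{t_1}{t_2}{}$ with $\fibersum{s_1}{s_2}{} + \fibersum{t_1}{t_2}{} = 0$, that is, $\fibersum{s_1+t_1}{s_2+t_2}{} = 0$ in $\fibersum{S_1}{S_2}{S}$. The goal is to conclude $s_1 = 0$ and $s_2 = 0$, which forces $\fibersum{s_1}{s_2}{} = 0$.

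For part (ii), this is almost immediate: Lemma~\ref{liujc} applied to $\fibersum{s_1+t_1}{s_2+t_2}{}=0$ yields $s_1+t_1 = 0$ in $S_1$ and $s_2+t_2 = 0$ in $S_2$; since $S_1$ and $S_2$ are themselves positive, $s_1 = t_1 = 0$ and $s_2 = t_2 = 0$. Hence $\fibersum{s_1}{s_2}{} = 0$ and $\fibersum{S_1}{S_2}{S}$ is positive.

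For part (i), I would argue using the unique-representation structure established in the lemma preceding this proposition (the one showing every element of $\fibersum{S_1}{S_2}{S}$ has a unique representation over $S_1$, with Apéry set $\{\fibersum{0}{w}{} : w \in \Apr(S_2/S)\}$). Write each of $\fibersum{s_1}{s_2}{}$ and $\fibersum{t_1}{t_2}{}$ in this normal form: using positivity of $S_2$, decompose $s_2 = a + w$ and $t_2 = a' + w'$ with $a,a' \in S$, $w,w' \in \Apr(S_2/S)$, so that $\fibersum{s_1}{s_2}{} = \fibersum{s_1+a}{w}{}$ and $\fibersum{t_1}{t_2}{} = \fibersum{t_1+a'}{w'}{}$. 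Their sum is $\fibersum{s_1+a+t_1+a'}{w+w'}{}$. But $0 = \fibersum{0}{0}{}$ is itself in normal form with $S_1$-part $0$ and Apéry part $0$; comparing via uniqueness of the representation over $S_1$ (after re-normalizing $w+w'$ as $c + w''$ with $c\in S$, $w''\in\Apr(S_2/S)$), uniqueness of the representation of $0$ forces the Apéry part $w''$ to be $0$ and the $S_1$-part $s_1+a+t_1+a'+c$ to be $0$. Positivity of $S_1$ then gives $s_1 = 0$ (and $a = a' = 0$), and $w + w' = 0$ in $S_2$ together with positivity of $S_2$ gives $w = w' = 0$, hence $s_2 = a + w = 0$. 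Therefore $\fibersum{s_1}{s_2}{} = 0$.

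The main obstacle is bookkeeping in part (i): one must be careful that "the representation of $0$ over $S_1$" is genuinely unique in the sense proved (i.e., that the zero element's unique representation is $\fibersum{0}{0}{}$ with trivial Apéry part), and that re-normalizing $w+w'$ does not introduce a spurious nonzero $S_1$-component — this is exactly where positivity of $S_2$ and of $S_1$ are used in tandem. Part (ii) is a direct corollary of Lemma~\ref{liujc} and needs no further work.
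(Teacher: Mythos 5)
Your proof is correct. Part (ii) is exactly the paper's argument: apply Lemma~\ref{liujc} to $\fibersum{s_1+t_1}{s_2+t_2}{}=0$ and finish with positivity of $S_1$ and $S_2$. Part (i) reaches the same conclusion by a slightly different route: you put everything into the normal form $\fibersum{s}{w}{}$ with $s\in S_1$ and $w\in\Apr(S_2/S)$ and then invoke the uniqueness of representations over $S_1$ established in the (unlabeled) lemma preceding the proposition, whereas the paper argues directly from the defining relation~(\ref{eq:47125}): from $s_2+t_2+b=a$ in $S_2$, unique representation over $S$ forces $s_2+t_2\in S$, after which the whole identity is transported into $S_1$ and cancelled there. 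Both arguments rest on the same two ingredients --- unique representation killing the Ap\'ery content on the $S_2$ side, then positivity of $S_1$ --- so yours is a legitimate variant; it is marginally longer because of the re-normalization bookkeeping, but it has the virtue of reusing the preceding lemma rather than re-deriving its content. Two small remarks: the existence of the decompositions $s_2=a+w$ is already part of hypothesis (i) (a ``unique representation'' presupposes one exists), so positivity of $S_2$ is not needed for that step; where you genuinely do need positivity of $S_2$ is to know that $0\in\Apr(S_2/S)$, so that $\fibersum{0}{0}{}$ is itself in normal form and the uniqueness statement can be applied against it --- you flag this correctly as the delicate point.
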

\begin{proof}
Suppose every
element of $S_2$ has a unique representation over $S$.
Assume that
$(\fibersum{s_1}{s_2}{})+(\fibersum{t_1}{t_2}{})=0$.
Then there exist $a,b\in S$ such that $s_1+t_1+a=b$ in $S_1$ and $s_2+t_2+b=a$ in $S_2$.
Since every element of $S_2$ has a unique representation over $S$, the element
$s_2+t_2$ has to be in $S$.  Hence, the equality $(s_2+t_2)+b=a$ holds in $S$ and so also in $S_1$.  Working on $S_1$, we have
\[
(s_1+t_1+a)+(s_2+t_2)=b+(s_2+t_2)=a.
\]
We may cancel out $a$ and obtain
$s_1+t_1+(s_2+t_2)=0$. By positivity of $S_1$, $s_1=t_1=s_2+t_2=0$.
By positivity of $S_2$, $s_2=t_2=0$.
We conclude that $\fibersum{S_1}{S_2}{S}$ is positive.

Next, assume that $S$ is numerical.  Suppose $\fibersum{s_1}{s_2}{}, \fibersum{t_1}{t_2}{} \in \fibersum{S_1}{S_2}{S}$ such that
$(\fibersum{s_1}{s_2}{}) + (\fibersum{t_1}{t_2}{}) = 0$ in $\fibersum{S_1}{S_2}{S}$.  By Lemma~\ref{liujc}, we have $s_i+t_i = 0$ in $S_i$ for $i=1,\, 2$.  By the positivity of $S_i$, this implies that $s_i = t_i = 0$.  Therefore, $\fibersum{s_1}{s_2}{} = \fibersum{t_1}{t_2}{} = 0$.  This shows that $\fibersum{S_1}{S_2}{S}$ is positive.
\end{proof}

The positivity of $\fibersum{S_1}{S_2}{S}$ gives the positivity of $\Fibersum{S_1}{S_2}{S}$.  More precisely,
suppose $\Fibersum{s_1}{s_2}{}, \Fibersum{t_1}{t_2}{} \in \Fibersum{S_1}{S_2}{S}$ such that
$(\Fibersum{s_1}{s_2}{}) + (\Fibersum{t_1}{t_2}{}) = 0$ in $\Fibersum{S_1}{S_2}{S}$.  Then there exists a positive integer $n$ such that
$n \big( (\fibersum{s_1}{s_2}{}) + (\fibersum{t_1}{t_2}{}) \big) = 0$ and so $n (\fibersum{s_1}{s_2}{}) + n(\fibersum{t_1}{t_2}{}) = 0$ in $\fibersum{S_1}{S_2}{S}$.  Since $\fibersum{S_1}{S_2}{S}$ is positive, this implies that $n(\fibersum{s_1}{s_2}{}) = n(\fibersum{t_1}{t_2}{}) = 0$, and so $\Fibersum{s_1}{s_2}{} = \Fibersum{t_1}{t_2}{} = 0$ and this shows that $\Fibersum{S_1}{S_2}{S}$ is positive.  With this observation and Proposition~\ref{positive}, we have the following result.

\begin{cor}\label{cor:positive}
For embeddings $S\to S_1$ and $S\to S_2$ of positive affine semigroups,
the fibered sum $\Fibersum{S_1}{S_2}{S}$ is positive under either of
the following
conditions.
\begin{itemize}
\item[(i)] Every element of $S_2$ has a unique representation over~$S$.
\item[(ii)] $S$ is numerical.
\end{itemize}
\end{cor}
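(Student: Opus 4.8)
The plan is to deduce the corollary directly from Proposition~\ref{positive}, using only the fact that positivity of a cancellative monoid descends to its torsion quotient. Recall from Notation~\ref{notation:oplus+} that $\Fibersum{S_1}{S_2}{S} = (\fibersum{S_1}{S_2}{S})/\sim_{\tor}$, and that conditions (i) and (ii) of the corollary are literally the two hypotheses appearing in Proposition~\ref{positive}. So the first step is simply to apply that proposition to the cancellative monoid $\fibersum{S_1}{S_2}{S}$: under either hypothesis it gives that $\fibersum{S_1}{S_2}{S}$ is positive.

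The second and only remaining step is to transfer positivity through the quotient map. Suppose $\fibersum{s_1}{s_2}{}$ and $\fibersum{t_1}{t_2}{}$ in $\fibersum{S_1}{S_2}{S}$ have images in $\Fibersum{S_1}{S_2}{S}$ summing to $0$, i.e.\ $(\Fibersum{s_1}{s_2}{}) + (\Fibersum{t_1}{t_2}{}) = 0$. By the very definition of $\sim_{\tor}$ there is a positive integer $n$ with $n\big((\fibersum{s_1}{s_2}{}) + (\fibersum{t_1}{t_2}{})\big) = n(\fibersum{s_1}{s_2}{}) + n(\fibersum{t_1}{t_2}{}) = 0$ in $\fibersum{S_1}{S_2}{S}$; positivity of the latter monoid forces $n(\fibersum{s_1}{s_2}{}) = n(\fibersum{t_1}{t_2}{}) = 0$, hence both $\fibersum{s_1}{s_2}{} \sim_{\tor} 0$ and $\fibersum{t_1}{t_2}{} \sim_{\tor} 0$, so that $\Fibersum{s_1}{s_2}{} = \Fibersum{t_1}{t_2}{} = 0$. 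This is precisely the chain of implications recorded in the paragraph immediately preceding the statement, so the conclusion follows in both cases.

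I do not expect any genuine obstacle here: the substance of the result lives entirely in Proposition~\ref{positive}, whose second case in turn rests on Lemma~\ref{liujc}, and that is where the real work occurs, balancing cancellation against the positivity of $S_1$ and $S_2$ and the embedding $S \hookrightarrow \N$. The passage to the torsion quotient is routine, since $\sim_{\tor}$ collapses to $0$ any element $x$ admitting an additive inverse $y$ as soon as $nx = ny = 0$ for some $n > 0$; thus no hypotheses beyond those of Proposition~\ref{positive} are needed, and the proof is short.
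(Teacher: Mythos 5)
Your proof is correct and is essentially identical to the paper's: the authors likewise apply Proposition~\ref{positive} to get positivity of $\fibersum{S_1}{S_2}{S}$ and then descend to the torsion quotient by exactly the argument you give (it appears verbatim in the paragraph preceding the corollary). Nothing is missing.
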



\section{Algebras}\label{sec:algebra}


For the remaining paper, $\kappa$ is a field. Let $S$ be a commutative monoid. The monoid ring
$\kappa[\bu^S]$ in the symbol $\bu$ is the set of formal finite sums
$\sum_{s\in S} a_s\bu^s$,
where $a_s\in\kappa$, with term-wise addition and the multiplication given by
$(a_s\bu^s)(a_t\bu^t)=a_sa_t\bu^{s+t}$. If the commutative monoid is an affine
semigroup, the symbol $\bu^s$ can be interpreted as a monomial. By definition, an
affine semigroup $S$ can be embedded into $\Z^d$. With the embedding,
an element of $S$ is realized as a $d$-tuple $(i_1,\ldots,i_d)$ of integers. The
{\em affine semigroup ring} $R:=\kappa[\bu^S]$ is the
subring of the Laurent polynomial ring $\kappa[\bu,\bu^{-1}]$  in the variables $\bu$ generated by
$u_1^{i_1}\cdots u_d^{i_d}$, where $(i_1,\ldots,i_d)\in S$. An affine semigroup
can also be embedded to $\Q^d$. Monomials with rational exponents naturally occur
in comparison of affine semigroup rings with various embeddings.

\begin{example}\label{ex:089685}
The affine semigroup $\langle(2,0), (1,1), (0,2)\rangle\subset\N^2$ also
embeds into $\Q^2$ through $(1,0)\mapsto(1/2,0)$ and $(0,1)\mapsto(1/2,1)$.
These two embeddings give rise to an isomorphism
\[
\kappa[u_1^2,u_1u_2,u_2^2]\simeq
\kappa[x,xy,xy^2]
\]
of $\kappa$-algebras, where $u_1\mapsto x^{1/2}$ and $u_2\mapsto x^{1/2}y$.
The affine semigroup also embeds into $\Q^3$ through $(1,0)\mapsto(1,1/2,0)$ and $(0,1)\mapsto(0,1/2,1)$. 
These two embeddings give rise to an isomorphism
\[
\kappa[u_1^2,u_1u_2,u_2^2]\simeq
\kappa[x^2y,xyz,yz^2]
\]
of $\kappa$-algebras, where $u_1\mapsto xy^{1/2}$ and $u_2\mapsto y^{1/2}z$.
Another embedding into
$\Q^3$ through $(1,0)\mapsto(1/2,1,0)$ and $(0,1)\mapsto(1/2,0,1)$
gives rise to an isomorphism
\[
\kappa[u_1^2,u_1u_2,u_2^2]\simeq
\kappa[xy^2,xyz,xz^2]
\]
of $\kappa$-algebras, where $u_1\mapsto x^{1/2}y$ and $u_2\mapsto x^{1/2}z$.
\end{example}

The affine semigroup rings in the following example appeared in \cite[Example 7]{gim-srin:smfrsrog} and \cite[Example 1]{gim-srin:gswh}.
In terms of rational exponents, we see that these rings are isomorphic.

\begin{example}\label{ex:2copies}
The affine semigroup $\langle(3,0), (2,1), (1,2), (0,3)\rangle\subset\N^2$ embeds into
$\N^3$ through $(1,0)\mapsto(1,1,0)$ and $(0,1)\mapsto(1,0,1)$. The embedding
gives rise to an isomorphism
\[
\kappa[u_1^3,u_1^2u_2,u_1u_2^2,u_2^3]\simeq
\kappa[x^3y^3,x^3y^2z,x^3yz^2,x^3z^3]
\]
of $\kappa$-algebras, where $u_1\mapsto xy$ and $u_2\mapsto xz$.
The affine semigroup also embeds into $\Q^2$ through $(1,0)\mapsto(4/3,0)$ and $(0,1)\mapsto(1/3,1)$. 
The latter embedding gives rise to an isomorphism
\begin{equation}\label{eq:87541}
\kappa[u_1^3,u_1^2u_2,u_1u_2^2,u_2^3]\simeq
\kappa[x^4,x^3y,x^2y^2,xy^3]
\end{equation}
of $\kappa$-algebras, where $u_1\mapsto x^{4/3}$ and $u_2\mapsto x^{1/3}y$.
\end{example}

From the above perspective, an affine semigroup ring $R=\kappa[\bu^S]$ is not only a ring, but a ring together with
an embedding $R\to\kappa[\bu,\bu^{-1}]$ indicated by the variables $\bu$.
We use the notation $S=\log_\bu R$ for the correspondence between the semigroup
$S$ and the commutative ring $R$. As seen in Examples~\ref{ex:089685} and \ref{ex:2copies},
an affine semigroup ring may occur in different guises from various embeddings of the semigroup.
For an affine semigroup ring $R$ in a set of variables
$\bu$ and in another set of variables $\bv$, there is an isomorphism
$\log_\bv\bu\colon\log_\bu R\to\log_\bv R$, which can be
described by an invertible matrix. For instance,
\[
\log_\bv\bu=\begin{pmatrix}4/3& 1/3\\ 0 &1\end{pmatrix}
\]
indicates the change of embeddings that gives rise to the isomorphism~(\ref{eq:87541}).
If $\log_\bu R$ and $\log_\bv R$ are subsets of $\Q$, the isomorphism
$\log_\bv\bu$ is simply the multiplication by the number $s\in\Q$ given by
$\bv=\bu^s$. If $s = \log_{\bu} {\bv} \in\N$, we say that the embedding of $R$  in
$\kappa[\bu]$ is {\em finer} than its embedding in $\kappa[\bv]$.

If $S$ is an affine subsemigroup of another affine semigroup $S'$, we may consider them
as subsemigroups of some $\Z^d$. The semigroup ring $R':=\kappa[\bu^{S'}]$ is an
algebra over the coefficient ring $R:=\kappa[\bu^S]$.
We use $R'/R$ to denote the {\em affine semigroup algebra} $R'$ over $R$.
An affine semigroup ring $\kappa[\bu^S]$ is {\em positive} if $\bu^0$ is the only invertible monomial;
or equivalently, the affine semigroup $S$ is positive.
We call an affine semigroup algebra $R'/R$ positive, if $R'$ is positive.
The coefficient ring of a positive affine semigroup algebra is also positive.

Let $R'/R$ be an affine semigroup algebra in the variables $\bu$.
A monomial  $\bu^w\in R'$ is {\em Ap\'{e}ry} over $R$ if $w$ is Ap\'{e}ry over the semigroup
$\log_\bu R$. In other words, $\bu^w$ is {\em Ap\'{e}ry} over $R$ if
it can not be written as $\bu^s\bu^{w_0}$ for any $\bu^{w_0}\in R'$ and
$1\neq \bu^s\in R$. The set of Ap\'{e}ry monomials of $R'$ over $R$ is denoted by $\Apr(R'/R)$.
The notion of representations in affine semigroups also has an exponential counterpart.
To be precise, a {\em representation} of a monomial  $\bu^{s'}\in R'$
over $R$ is an expression $\bu^{s'}=\bu^s\bu^w$, where $\bu^s\in R$ and $\bu^w\in \Apr(R'/R)$.
If $R'/R$ is positive, any monomial in $R'$ has a representation over $R$.

\begin{thm}\label{prop1:freeflat}
Let $R'/R$ be a positive affine semigroup algebra in the variables $\bu$.
The following conditions are equivalent.
\begin{enumerate}[\em (i)]
\item Every monomial in $R'$ has a unique representation over $R$.
\item $R'$ is free over $R$.
\item $R'$ is flat over $R$.
\end{enumerate}
\end{thm}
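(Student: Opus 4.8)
The plan is to prove the cycle of implications (i) $\Rightarrow$ (ii) $\Rightarrow$ (iii) $\Rightarrow$ (i), with the last one being the substantive part. For (i) $\Rightarrow$ (ii): since $R'/R$ is positive, every monomial $\bu^{s'} \in R'$ has a representation $\bu^{s'} = \bu^s \bu^w$ with $\bu^s \in R$ and $\bu^w \in \Apr(R'/R)$, and by hypothesis this representation is unique. I would show that $R'$ is a free $R$-module with basis $\{\bu^w \mid \bu^w \in \Apr(R'/R)\}$. Spanning is immediate from existence of representations. For linear independence over $R$, suppose $\sum_i r_i \bu^{w_i} = 0$ with distinct Apéry monomials $\bu^{w_i}$ and $r_i \in R$; expanding each $r_i$ in the monomial basis of $R$ and comparing monomials in $R'$, any collision $\bu^{s}\bu^{w_i} = \bu^{s'}\bu^{w_j}$ forces $i = j$ (and $s = s'$) by uniqueness of representations, so each coefficient vanishes. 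The implication (ii) $\Rightarrow$ (iii) is standard, as free modules are flat.

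The heart of the argument is (iii) $\Rightarrow$ (i). Here I would argue by contrapuntal contradiction: assume flatness but that some monomial has two distinct representations, say $\bu^{s_1}\bu^{w_1} = \bu^{s_2}\bu^{w_2}$ in $R'$ with $\bu^{s_i} \in R$, $\bu^{w_i} \in \Apr(R'/R)$, and $(s_1, w_1) \neq (s_2, w_2)$. Translating to the semigroup level with $S = \log_\bu R \subset S' = \log_\bu R'$, this says $s_1 + w_1 = s_2 + w_2$ in $S'$ with $w_i \in \Apr(S'/S)$ and $s_1 \neq s_2$ (if $s_1 = s_2$ then cancellativity forces $w_1 = w_2$). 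The standard way to extract a contradiction from flatness is via the equational criterion: the relation $\bu^{s_1}\bu^{w_1} - \bu^{s_2}\bu^{w_2} = 0$, viewed as an $R$-linear relation $s_1 \cdot \bu^{w_1} + (-s_2) \cdot \bu^{w_2} = 0$ among the two elements $\bu^{w_1}, \bu^{w_2}$ of the flat module $R'$ (with coefficients $\bu^{s_1}, -\bu^{s_2} \in R$), must factor through relations with coefficients in $R$ and elements $y_1, \dots, y_m \in R'$ such that $\bu^{s_1} = \sum_j \bu^{a_j} c_{1j}$, $-\bu^{s_2} = \sum_j \bu^{a_j} c_{2j}$ in $R$, and $\sum_i c_{ij} \bu^{w_i} = 0$ in $R'$ for each $j$. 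The positivity of $S$ is what makes this productive: because $R$ is a positive affine semigroup ring, it is graded with $R_0 = \kappa$, and one reduces to homogeneous data, eventually finding a monomial $\bu^c \in R$ with $c \neq 0$ dividing (in $R$) both $\bu^{s_1}$ and $\bu^{s_2}$ in a way compatible with $\sum_i c_i \bu^{w_i} = 0$; but a two-term monomial relation $c_1 \bu^{w_1} + c_2 \bu^{w_2} = 0$ in the domain $R'$ with $c_i \in \kappa^\times$ forces $w_1 = w_2$, then $c \neq 0$ dividing $\bu^{s_i}$ contradicts that $\bu^{w_i}$ is Apéry (or gives $s_1 = s_2$).

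I expect the main obstacle to be organizing the equational-criterion bookkeeping cleanly in the semigroup-graded setting: one must be careful that $R'$ need not be $R$-graded in the naive sense (the $S'$-grading is finer), and that the relations produced by flatness have coefficients spread over possibly many monomials, so the positivity/grading argument needs the right finiteness input — namely that in a positive affine semigroup only finitely many elements lie below a given one, so an induction on a degree function terminates. An alternative route, which I would pursue if the equational criterion gets unwieldy, is to use that flatness of $R'$ over $R$ combined with $R$ being a positive (hence $*$-local after localizing, or at least graded with residue field $\kappa$ at the irrelevant ideal $\m_R$) implies $R' \otimes_R \kappa \cong R'/\m_R R'$ has $\kappa$-basis the images of the Apéry monomials, and then a graded Nakayama / local criterion for freeness (flat + finitely presented over a Noetherian graded ring, or flat + the fiber having the "expected" basis) upgrades flatness to freeness with that basis, from which uniqueness of representations follows exactly as in (i) $\Leftrightarrow$ (ii). Either way, positivity of the algebra is the indispensable hypothesis, since Example~\ref{ex:notPositive}-type phenomena show the statement fails without it.
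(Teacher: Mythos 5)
Your overall architecture is the paper's: (i)~$\Rightarrow$~(ii) by taking $\Apr(R'/R)$ as a basis and comparing monomials, (ii)~$\Rightarrow$~(iii) trivially, and (iii)~$\Rightarrow$~(i) via the equational criterion for flatness applied to the two-term relation $\bu^{s_1}\bu^{w_1}-\bu^{s_2}\bu^{w_2}=0$. The first two implications are fine as you describe them (the one fact you should state explicitly, which the paper puts up front and which does all the work in place of any grading, is that a monomial appearing in an element of $R$, resp.\ $R'$, is itself in $R$, resp.\ $R'$).

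The gap is in how you plan to extract the contradiction in (iii)~$\Rightarrow$~(i). First, you have the equational criterion backwards: it does not decompose the coefficients $\bu^{s_i}$ as $\sum_j \bu^{a_j}c_{ij}$ subject to relations $\sum_i c_{ij}\bu^{w_i}=0$ among the $\bu^{w_i}$ (that version is vacuous); it decomposes the \emph{module elements}, $\bu^{w_i}=\sum_j f_{ij}g_j$ with $f_{ij}\in R$, $g_j\in R'$, subject to syzygies $\bu^{s_1}f_{1j}-\bu^{s_2}f_{2j}=0$ among the coefficients. Second, your proposed endgame --- producing a two-term relation $c_1\bu^{w_1}+c_2\bu^{w_2}=0$ with $c_i\in\kappa^\times$ --- is not something the criterion yields, and no grading/induction-on-degree reduction is needed. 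The actual extraction is: from $\bu^{w_1}=\sum_j f_{1j}g_j$, the monomial $\bu^{w_1}$ must occur in some product $f_{1i}g_i$, giving a factorization $\bu^{w_1}=\bu^{t}\bu^{t'}$ with $\bu^{t}\in R$ a monomial of $f_{1i}$ and $\bu^{t'}\in R'$; Ap\'ery-ness of $\bu^{w_1}$ forces $\bu^{t}=1$. Feeding the monomial $\bu^t=1$ of $f_{1i}$ into the syzygy $\bu^{s_1}f_{1i}-\bu^{s_2}f_{2i}=0$ forces $\bu^{s_1}=\bu^{s_2}\bu^{r}$ for some monomial $\bu^{r}\in R$, whence $\bu^{r}\bu^{w_1}=\bu^{w_2}$, and Ap\'ery-ness of $\bu^{w_2}$ gives $\bu^{r}=1$ and $w_1=w_2$, the desired contradiction. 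Your fallback route (graded Nakayama over the positively graded $R$ with $R_0=\kappa$, using that $R'/\m R'$ has the Ap\'ery monomials as a $\kappa$-basis) is genuinely different from the paper and can be made to work, but it needs care: $R'$ is not finitely generated over $R$ in general, so you must invoke the bounded-below graded version of ``$\mathrm{Tor}_1(\kappa,M)=0$ implies free,'' not a finite-presentation local criterion.
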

\begin{proof}First note that because $R$ and $R'$ are both subrings of $\kappa[\bu]$ generated by monomials, 
if $f$ is in  $R$ (resp. $R'$), then every monomial appearing in $f$ is contained in $R$ (resp. $R'$).  Hence, if every monomial in $R'$ has a unique representation, then $R'$ is a
free module over $R$ with $\Apr(R'/R)$ as a basis.  More precisely, suppose there exist distinct $\bu^{w_1}, \bu^{w_2}, \ldots, \bu^{w_n} \in \Apr(R'/R)$ and nonzero $f_1, f_2, \ldots, f_n \in R$ such that $f_1 \bu^{w_1} + f_2 \bu^{w_2} + \cdots + f_n \bu^{w_n} = 0$.   Since $f_1$ is nonzero, let $\bu^{s_1}$ be a monomial appearing in $f_1$. Then the monomial $\bu^{s_1}\bu^{w_1}$ must appear in $f_i\bu^{w_i}$ for some $i$, and so $\bu^{s_i}\bu^{w_i} = \bu^{s_1}\bu^{w_1}$ where $\bu^{s_i}$ is a monomial appearing in $f_i$.  Since $\bu^{w_1}$ and $\bu^{w_i}$ are distinct,
the equality $\bu^{s_i}\bu^{w_i} = \bu^{s_1}\bu^{w_1}$ contradicts the assumption that every monomial in $R'$ has a unique representation over $R$.
This proves that (i) implies (ii).

Since free modules are
flat, it remains to show that every monomial in $R'$ has a unique representation over $R$ if $R'$ is flat over $R$.
Suppose there exist distinct $\bu^{w_1}, \bu^{w_2} \in \Apr(R'/R)$ and $\bu^{s_1}, \bu^{s_2} \in R$ such that
\[
\bu^{s_1}\bu^{w_1} = \bu^{s_2} \bu^{w_2}, \quad \mbox{i.e., } \bu^{s_1}\bu^{w_1} - \bu^{s_2} \bu^{w_2} = 0.
\]
Since $R'$ is flat over $R$, by \cite[Theorem 7.6]{mats:crt}, there exist matrices
\[
\mathcal{M} = \left[ \begin{array}{ccc} \smallskip f_{11} & \cdots & f_{1\ell} \\ f_{21} & \cdots & f_{2\ell} \end{array}\right] \qquad \mbox{and} \qquad \left[ \begin{array}{c} g_1 \\ \vdots \\ g_\ell \end{array}\right]
\]
with $f_{ij} \in R$ and $g_i \in R'$ such that
\[
\left[\begin{array}{cc} \bu^{s_1} & \bu^{s_2} \end{array}\right] \mathcal{M} = 0 \qquad \mbox{and} \qquad
\mathcal{M}\left[ \begin{array}{c} g_1 \\ \vdots \\ g_\ell \end{array}\right] = \left[\begin{array}{c} \bu^{w_1} \\ \bu^{w_2} \end{array}\right].
\]
Since $f_{11}g_1 + \cdots + f_{1\ell} g_\ell = \bu^{w_1}$, the monomial $\bu^{w_1}$ must appear in $f_{1i}g_i$ for some $i$, and so we have $\bu^{w_1}=\bu^t\bu^{t'}$, where $\bu^t$ and $\bu^{t'}$ are monomials appearing in $f_{1i}$ and $g_i$, respectively.  
That $f_{1i} \in R$ and $g_i \in R'$ implies $\bu^t \in R$ and $\bu^{t'} \in R'$.  Since $\bu^{w_1} \in \Apr(R'/R)$, $\bu^t = 1$.  Moreover, since $\bu^{s_1}f_{1i} + \bu^{s_2}f_{2i} = 0$ and $\bu^t$ is a monomial appearing in $f_{1i}$, the monomial $\bu^{s_1} \bu^t = \bu^{s_1}$ appears in $\bu^{s_1}f_{1i}$ and so $\bu^{s_1}$ appears in $\bu^{s_2}f_{2i}$.  
This implies $\bu^{s_1} = \bu^{s_2}\bu^{r}$ for some monomial $\bu^{r}$ appearing in $f_{2i}$. 
Notice that $\bu^{r}$ must be in $R$ because $f_{2i} \in R$. Therefore
\[
\bu^{s_1}\bu^{w_1} = \bu^{s_2}\bu^{w_2} \implies \bu^{s_2}\bu^{r}\bu^{w_1} = \bu^{s_2}\bu^{w_2} \implies \bu^{r}\bu^{w_1} = \bu^{w_2} .
\]
Since $\bu^{w_2} \in \Apr(R'/R)$ and $\bu^{r} \in R$, $\bu^{r} = 1$ and so $\bu^{w_1} = \bu^{w_2}$, 
which contradicts the assumption that $\bu^{w_1}$ and $\bu^{w_2}$ are distinct.
\end{proof}

The following corollary is an immediate consequence of Lemma~\ref{lem:uniqueRepr} and Theorem~\ref{prop1:freeflat}.
\begin{cor}\label{cor:flat}
Let $\kappa[\bu^{S'}]$ be a positive affine semigroup ring and $s\in {S'}$. Then $\kappa[\bu^{S'}]$ is flat
over $\kappa[\bu^s]$.
\end{cor}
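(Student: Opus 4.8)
The plan is to recognize this as a direct translation of Lemma~\ref{lem:uniqueRepr} into the language of algebras, followed by a single application of Theorem~\ref{prop1:freeflat}.

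First I would dispose of the degenerate case $s=0$: then $\bu^s=1$, so $\kappa[\bu^s]=\kappa$ and $\kappa[\bu^{S'}]$ is a $\kappa$-vector space, hence free and in particular flat over $\kappa$. So assume $s\neq 0$. Since $S'$ is positive, $s$ is not invertible, and in fact $ns\neq 0$ for every $n>0$; otherwise $s+(n-1)s=0$ would make $s$ invertible. Consequently the submonoid $\langle s\rangle=\{0,s,2s,\dots\}$ of $S'$ is isomorphic to $\N$, it is positive as a submonoid of the positive semigroup $S'$, and $\kappa[\bu^s]=\kappa[\bu^{\langle s\rangle}]$ with $\log_\bu\kappa[\bu^s]=\langle s\rangle$. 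Thus $\kappa[\bu^{S'}]/\kappa[\bu^s]$ is a positive affine semigroup algebra whose coefficient semigroup $\langle s\rangle$ is isomorphic to $\N$.

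Next I would apply Lemma~\ref{lem:uniqueRepr} with $S=\langle s\rangle$ and this $S'$: since $S\subset S'$ are positive affine semigroups with $S\simeq\N$, every element of $S'$ has a unique representation over $S$. Passing to the exponential counterpart set up in Section~\ref{sec:algebra}, a monomial $\bu^{s'}\in\kappa[\bu^{S'}]$ is Ap\'ery over $\kappa[\bu^s]$ precisely when $s'$ is Ap\'ery over $\langle s\rangle$, and a representation $\bu^{s'}=\bu^{ns}\bu^{w}$ corresponds to a representation $s'=ns+w$ in $S'$; hence every monomial of $\kappa[\bu^{S'}]$ has a unique representation over $\kappa[\bu^s]$. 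Finally, Theorem~\ref{prop1:freeflat} applied to the positive affine semigroup algebra $\kappa[\bu^{S'}]/\kappa[\bu^s]$ gives that condition (i), unique representation of every monomial, is equivalent to condition (iii), flatness of $\kappa[\bu^{S'}]$ over $\kappa[\bu^s]$, which yields the assertion. There is essentially no obstacle here: all the substance lives in the two cited results, and the only points requiring care are the case $s=0$ and the identification $\langle s\rangle\simeq\N$, both of which rest on the positivity of $S'$.
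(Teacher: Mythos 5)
Your proposal is correct and follows exactly the paper's route: the authors state this corollary as an immediate consequence of Lemma~\ref{lem:uniqueRepr} (unique representation over a submonoid isomorphic to $\N$) combined with Theorem~\ref{prop1:freeflat}. Your extra care with the degenerate case $s=0$ and with verifying $\langle s\rangle\simeq\N$ via positivity is a welcome refinement but does not change the argument.
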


Consider affine semigroup algebras $R_1/R$ in variables $\bu$ and $R_2/R$
in variables $\bv$. Denote $S_1=\log_\bu R_1$, $S_2=\log_\bv R_2$ and
$S=\log_\bu R\simeq\log_\bv R$.  In previous sections, we construct two cancellative monoids $\fibersum{S_1}{S_2}{S}$ and $\Fibersum{S_1}{S_2}{S}$. The former is finitely generated  but not necessarily torsion free. 
The latter is the quotient of the former by the equivalence relation $\sim_{\tor}$ and thus is an affine semigroup.  
In particular, using the fact that $\Fibersum{S_1}{S_2}{S}$ is a fibered sum of $S_1$ and $S_2$ over $S$ in the category of affine semigroups, it can be checked directly that $\kappa[\bw^{\Fibersum{S_1}{S_2}{S}}]$ over $R$ satisfies the universal property,
 and thus is a fibered sum of $R_1/R$ and $R_2/R$ in the category of affine semigroup algebras.  
 We call the ring $\kappa[\bw^{\Fibersum{S_1}{S_2}{S}}]$ a fibered sum of the affine semigroup rings $R_1$ and $R_2$ over $R$.
In order to understand $\kappa[\bw^{\fibersum{S_1}{S_2}{S}}]$ and $\kappa[\bw^{\Fibersum{S_1}{S_2}{S}}]$ better,
it is desirable to compare them with other rings that naturally arise from $R_1$ and $R_2$, 
such as their tensor product over $R$.
The following lemma proves that if one of the algebras is positive and flat, then $\kappa[\bw^{\fibersum{S_1}{S_2}{S}}]$ is isomorphic to $R_1\otimes_R R_2$.

\begin{lem}\label{lem:05664}
Consider affine semigroup algebras $R_1/R$ in variables $\bu$ and $R_2/R$
in variables $\bv$. Denote $S_1=\log_\bu R_1$ and $S_2=\log_\bv R_2$. Let $S$ be a semigroup isomorphic to both
$\log_\bu R$ and $\log_\bv R$. If $R_2$ is positive and flat over $R$, then
$\kappa[\bw^{\fibersum{S_1}{S_2}{S}}]\simeq R_1\otimes_R R_2$ as $R$-algebras.
\end{lem}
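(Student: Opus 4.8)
The plan is to build the isomorphism at the level of monomials. Since $R_2$ is positive and flat over $R$, Theorem~\ref{prop1:freeflat} tells us that every monomial in $R_2$ has a unique representation over $R$; equivalently, every element of $S_2$ has a unique representation over $S$, so $R_2$ is a free $R$-module with basis $\{\bv^w \mid w \in \Apr(S_2/S)\}$. I would first record this and conclude that $R_1 \otimes_R R_2$ is a free $R_1$-module with basis $\{1 \otimes \bv^w \mid w \in \Apr(S_2/S)\}$, hence as a $\kappa$-vector space it has basis $\{\bu^{s_1} \otimes \bv^w \mid s_1 \in S_1,\ w \in \Apr(S_2/S)\}$.

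Next I would use the universal property of the fibered sum in the category of affine semigroup algebras. The two structure maps $R_1 \to R_1 \otimes_R R_2$ (namely $r_1 \mapsto r_1 \otimes 1$) and $R_2 \to R_1 \otimes_R R_2$ (namely $r_2 \mapsto 1 \otimes r_2$) agree on $R$, so they factor through a ring homomorphism $\Phi\colon \kappa[\bw^{\fibersum{S_1}{S_2}{S}}] \to R_1 \otimes_R R_2$ with $\bw^{\fibersum{s_1}{s_2}{}} \mapsto \bu^{s_1} \otimes \bv^{s_2}$. Here I am using that $\kappa[\bw^{\fibersum{S_1}{S_2}{S}}]$ is the monoid ring on the cancellative monoid $\fibersum{S_1}{S_2}{S}$, so $\Phi$ is determined on monomials. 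Surjectivity of $\Phi$ is immediate, since $R_1 \otimes_R R_2$ is generated as a $\kappa$-algebra by the images of $R_1$ and $R_2$. For injectivity, since $R_2$ is positive, the preceding lemma (the one showing every element of $\fibersum{S_1}{S_2}{S}$ has a unique representation over $S_1$) gives that $\{\fibersum{s_1}{w}{} \mid s_1 \in S_1,\ w \in \Apr(S_2/S)\}$ is exactly the set of elements of $\fibersum{S_1}{S_2}{S}$, each written uniquely in this form. Thus the monomials $\bw^{\fibersum{s_1}{w}{}}$ form a $\kappa$-basis of $\kappa[\bw^{\fibersum{S_1}{S_2}{S}}]$, and $\Phi$ sends this basis to $\{\bu^{s_1} \otimes \bv^w\}$, which is the $\kappa$-basis of $R_1 \otimes_R R_2$ identified in the first paragraph. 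A map carrying a basis bijectively to a basis is an isomorphism, and $\Phi$ is clearly $R$-linear, so it is an isomorphism of $R$-algebras.

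The main obstacle is the bookkeeping needed to be certain that $\Phi$ really does match the two bases one-to-one rather than merely surjecting onto a spanning set. Concretely, one must check that distinct pairs $(s_1, w)$ and $(s_1', w')$ with $w, w' \in \Apr(S_2/S)$ give distinct elements $\fibersum{s_1}{w}{} \ne \fibersum{s_1'}{w'}{}$ in $\fibersum{S_1}{S_2}{S}$ — this is precisely the uniqueness-of-representation statement proved earlier using flatness of $R_2$, via criterion~(\ref{eq:47125}) — and symmetrically that $\bu^{s_1} \otimes \bv^w \ne \bu^{s_1'} \otimes \bv^{w'}$ in the tensor product, which follows from freeness of $R_2$ over $R$. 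Once both indexing sets are seen to be the literal same set and $\Phi$ is seen to be the identity on indices, injectivity and surjectivity are formal. I would also take a moment to note that positivity of $R_2$ is what guarantees representations exist at all, so that the basis genuinely spans; without it the argument breaks, matching the cautionary Example~\ref{ex:notFlat} referenced in the text.
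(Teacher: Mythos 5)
Your overall strategy---reduce both sides to the Ap\'ery normal form $s_1\oplus w$ with $w\in\Apr(S_2/S)$ and match the resulting $\kappa$-bases---is the paper's strategy, and your identification of the two bases via Theorem~\ref{prop1:freeflat} and the lemma on unique representations over $S_1$ is correct. The gap is in the step where you produce $\Phi$ ``from the universal property of the fibered sum.'' The universal property proved in the paper (Lemma~\ref{lem:fiberedSum}) lives in the category of cancellative monoids: to get a homomorphism out of $\fibersum{S_1}{S_2}{S}$ you must map into a cancellative monoid, and the well-definedness argument there genuinely uses cancellation in the target. The relevant target here is the multiplicative monoid of $\tensor{R_1}{R_2}{R}$, which is not cancellative in general---precisely in the setting of this lemma the tensor product can fail to be a domain (Example~\ref{ex:notFlat}: $\tensor{R_1}{R_2}{R}\simeq R_2[t]/\langle t^2-X^3\rangle$ with both $R_1$ and $R_2$ flat over $R$). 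Nor is $\tensor{R_1}{R_2}{R}$ known in advance to be an affine semigroup algebra, so no universal property available at this point delivers $\Phi$. Concretely, the well-definedness of $\bw^{\fibersum{s_1}{s_2}{}}\mapsto\bu^{s_1}\otimes\bv^{s_2}$ is itself something to prove: from $\fibersum{s_1}{s_2}{}=\fibersum{t_1}{t_2}{}$ and criterion~(\ref{eq:47125}) one only obtains $(\bu^{s_1}\otimes\bv^{s_2})(1\otimes\bv^{a+b})=(\bu^{t_1}\otimes\bv^{t_2})(1\otimes\bv^{a+b})$, and cancelling the factor $1\otimes\bv^{a+b}$ requires a separate argument.

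The repair is short and is exactly what the paper does: define the $\kappa$-linear map only on normal forms, $\bw^{s_1\oplus w}\mapsto\bu^{s_1}\otimes\bv^{w}$ for $w\in\Apr(S_2/S)$ (no well-definedness issue, by uniqueness of representations), check multiplicativity by renormalizing $w+w'$, and obtain the inverse from the universal property of the tensor product---the pushout in commutative rings, so that direction is unproblematic---sending $\bu^{s_1}\otimes\bv^{s_2}$ to $\bw^{\fibersum{s_1}{s_2}{}}$; one then verifies the two maps are mutually inverse. Alternatively, your basis computation in the first paragraph can be upgraded to show that the monomials $\bu^{s_1}\otimes\bv^{s_2}$ form a cancellative submonoid of $\tensor{R_1}{R_2}{R}$, after which Lemma~\ref{lem:fiberedSum} does apply; but that verification must be made explicit, since it is essentially the injectivity you are trying to establish.
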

\begin{proof}
Since $R_2$ is positive and flat over $R$, elements of $S_2$ can be written uniquely as $s+w$ for some $s\in S$ and
$w\in\Apr(S_2/S)$. Hence elements of $\fibersum{S_1}{S_2}{S}$ can be written uniquely as
$s_1\oplus w$ for some $s_1\in S_1$ and $w\in\Apr(S_2/S)$. We may
define a $\kappa$-linear map $\kappa[\bw^{\fibersum{S_1}{S_2}{S}}]\to R_1\otimes_R R_2$ by
$\bw^{s_1\oplus w}\mapsto\bu^{s_1}\otimes\bv^w$, which is indeed a ring homomorphism. The universal property of the tensor product
provides an $R$-linear map $R_1\otimes_R R_2\to\kappa[\bw^{\fibersum{S_1}{S_2}{S}}]$ satisfying
$\bu^{s_1}\otimes\bv^{s_2}\mapsto\bw^{\fibersum{s_1}{s_2}{}}$. These two maps are inverse to each other and so they are indeed isomorphisms of $R$-algebras.
\end{proof}

Observe that for Lemma~\ref{lem:05664}, in addition to the assumption that $R_2$ is positive and flat over $R$,
if the groups of differences of these semigroups are nicely related,
namely $\gp(S_1)\cap \gp(S_2) = \gp(S)$ in $\gp(\Fibersum{S_1}{S_2}{S})$,
then by Proposition~\ref{cor:6964}, $\fibersum{S_1}{S_2}{S}$ and $\Fibersum{S_1}{S_2}{S}$ are isomorphic and hence
$\kappa[\bw^{\Fibersum{S_1}{S_2}{S}}]$ is also isomorphic to $R_1\otimes_R R_2$.  In other words,
Lemma~\ref{lem:05664} and Proposition~\ref{cor:6964} show the following theorem.

\begin{thm}\label{thm:Fibersumetensor}
With notation as in Lemma~\ref{lem:05664},
if $R_2$ is positive and flat over $R$ and if $\gp(S_1)\cap \gp(S_2) = \gp(S)$ in $\gp(\Fibersum{S_1}{S_2}{S})$,
then
$\kappa[\bw^{\Fibersum{S_1}{S_2}{S}}]\simeq R_1\otimes_R R_2$ as $R$-algebras.
\end{thm}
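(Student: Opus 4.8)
The proof will assemble two results established above, so the plan is short. First I would apply Proposition~\ref{cor:6964}: the hypothesis $\gp(S_1)\cap\gp(S_2)=\gp(S)$ inside $\gp(\Fibersum{S_1}{S_2}{S})$ is exactly the condition under which the canonical homomorphism $\fibersum{S_1}{S_2}{S}\to\Fibersum{S_1}{S_2}{S}$, given by $\fibersum{s_1}{s_2}{}\mapsto\Fibersum{s_1}{s_2}{}$, is an isomorphism of cancellative monoids; equivalently, $\fibersum{S_1}{S_2}{S}$ is already torsion free and the two constructions coincide. Applying the monoid-ring functor $\kappa[\bw^{(-)}]$ then yields an isomorphism $\kappa[\bw^{\Fibersum{S_1}{S_2}{S}}]\simeq\kappa[\bw^{\fibersum{S_1}{S_2}{S}}]$; this is an isomorphism of $R$-algebras because both rings carry the $R$-algebra structure through the image of $S$ under the canonical embeddings into $\fibersum{S_1}{S_2}{S}$ and $\Fibersum{S_1}{S_2}{S}$, and the quotient map identifies these images.

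Next, since $R_2$ is positive and flat over $R$, Lemma~\ref{lem:05664} supplies an isomorphism of $R$-algebras $\kappa[\bw^{\fibersum{S_1}{S_2}{S}}]\simeq R_1\otimes_R R_2$. Composing the two isomorphisms gives $\kappa[\bw^{\Fibersum{S_1}{S_2}{S}}]\simeq R_1\otimes_R R_2$ as $R$-algebras, which is the claim.

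There is no genuine obstacle here: the content of the theorem is already carried by Theorem~\ref{prop1:freeflat} (which converts flatness of $R_2$ into the unique-representation property), by Lemma~\ref{lem:05664}, and by Proposition~\ref{cor:6964}. Should a self-contained argument be preferred, I would instead verify the universal property of the tensor product for $\kappa[\bw^{\Fibersum{S_1}{S_2}{S}}]$ directly: by positivity and flatness of $R_2$ every element of $S_2$ is uniquely $s+w$ with $s\in S$ and $w\in\Apr(S_2/S)$, hence every element of $\fibersum{S_1}{S_2}{S}$ is uniquely $\fibersum{s_1}{w}{}$; the $\gp$-hypothesis forces torsion-freeness so this normal form persists in $\Fibersum{S_1}{S_2}{S}$; and then the mutually inverse $R$-algebra maps $\bw^{\Fibersum{s_1}{w}{}}\mapsto\bu^{s_1}\otimes\bv^{w}$ and $\bu^{s_1}\otimes\bv^{s_2}\mapsto\bw^{\Fibersum{s_1}{s_2}{}}$ are built exactly as in the proof of Lemma~\ref{lem:05664}.
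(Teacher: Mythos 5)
Your proposal is correct and follows essentially the same route as the paper: the authors likewise deduce the theorem by combining Proposition~\ref{cor:6964} (the group condition gives $\fibersum{S_1}{S_2}{S}\simeq\Fibersum{S_1}{S_2}{S}$) with Lemma~\ref{lem:05664}. The extra detail you supply about the $R$-algebra structures and the optional direct verification is consistent with, but not needed beyond, the paper's argument.
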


In the category of commutative rings, nice properties of $R_1/R$ in the diagram
\[
\xymatrix{
R_1 \ar[r]& \tensor{R_1}{R_2}{R} \\
R  \ar[u]\ar[r]   &  R_2  \ar[u]
}
\]
pass to the algebra $R_1 \otimes_R R_2$ over $R_2$ under base change from $R$ to $R_2$.
In addition if $R_2$ is positive and flat over $R$, then by Lemma~\ref{lem:05664}, $\kappa[\bw^{\fibersum{S_1}{S_2}{S}}]$ is isomorphic to $R_1 \otimes_R R_2$, so the algebra $\kappa[\bw^{\fibersum{S_1}{S_2}{S}}]$ over $R_2$ inherits some nice properties of $R_1$ over $R$. With  notation and  assumptions as in Lemma~\ref{lem:05664},
note that if $S_1$ is also positive, then the affine semigroup $\Fibersum{S_1}{S_2}{S} \simeq (\fibersum{S_1}{S_2}{S})/\sim_{\tor}$ is positive by Corollary~\ref{cor:positive}(i). This leads us to adapt the following notation.

\begin{notation}\label{notation:otimes+}
With notation as in Lemma~\ref{lem:05664}, assuming that 
both $S_1$ and $S_2$ are positive and that $R_2$ is flat over $R$, 
we use $\Tensor{R_1}{R_2}{R}$ to denote the positive affine semigroup ring $\kappa[\bw^{\Fibersum{S_1}{S_2}{S}}]$ which is
the fibered sum of $R_1$ and $R_2$ over $R$.
\end{notation}

Furthermore, in the proof of Lemma~\ref{lem:05664}, we obtain an isomorphism
$R_1\otimes_R R_2 \to \kappa[\bw^{\fibersum{S_1}{S_2}{S}}]$ given by
$\bu^{s_1}\otimes\bv^{s_2} \mapsto \bw^{\fibersum{s_1}{s_2}{}}$.  Hence, it is natural for us to identify the monomial $\bw^{\fibersum{s_1}{s_2}{}}$ in $\kappa[\bw^{\fibersum{S_1}{S_2}{S}}]$ with the element $\bu^{s_1}\otimes\bv^{s_2}$ in $R_1\otimes_R R_2$.  Since Notation~\ref{notation:otimes+} introduces $\Tensor{R_1}{R_2}{R}$ for  $\kappa[\bw^{\Fibersum{S_1}{S_2}{S}}]$, it is natural to denote the monomial $\bw^{\Fibersum{s_1}{s_2}{}}$ in $\kappa[\bw^{\Fibersum{S_1}{S_2}{S}}]$ as $\Tensor{\bu^{s_1}}{\bv^{s_2}}{}$.

\begin{defn}\label{def:BaseChange}
Let $R_1/R$ and $R_2/R$ be positive affine semigroup algebras over $R$.
Assume that $R_2/R$ is flat. We say
that the positive affine semigroup algebra $\Tensor{R_1}{R_2}{R}$ over $R_2$ is the
{\em flat base change of $R_1/R$ by $R_2/R$}.
\end{defn}

Flat base change can be illustrated in the following commutative diagram under the condition that $R_2$ is flat over $R$:
\[
\xymatrix{
R_1 \ar[r]& \Tensor{R_1}{R_2}{R} \\
R  \ar[u]\ar[r]   &  R_2 \ar[u]
}
\]
We are interested in knowing what property captured in the vertical algebra $R_1/R$ passes to the other vertical algebra via flat base change.

\begin{example}\label{ex:algebra6Nvs12N}
Consider the affine semigroup rings $R = \kappa[X^{6n}]$, $R_1 = \kappa[X^3]$, $R_2 = \kappa[X^{2}]$.  First note that
by Proposition~\ref{prop:58908}, $\Fibersum{3\N}{2\N}{6n\N} \simeq 3\N + 2\N$, so $\Tensor{R_1}{R_2}{R} \simeq \kappa[X^{2},X^{3}]$.  Moreover,
by Corollary~\ref{cor:flat}, $R_2/R$ is flat.  We investigate the flat base change of $R_1/R$ by $R_2/R$ as illustrated below.
\[
\xymatrix{
R_1 = \kappa[X^3] \ar[r]& \Tensor{R_1}{R_2}{R} \simeq \kappa[X^{2},X^{3}] \\
R = \kappa[X^{6n}]  \ar[u]\ar[r]   &  R_2 = \kappa[X^{2}] \ar[u]
}
\]
If $n = 1$, then the Ap\'{e}ry monomials of $R_1/R$, namely $1$ and $X^3$, pass to the Ap\'{e}ry monomials
$\Tensor{1}{1}{}$ and $\Tensor{X^3}{1}{}$ of $(\Tensor{R_1}{R_2}{R})/R_2$.
However, if $n>1$, the Ap\'{e}ry monomial $X^6$ of $R_1/R$
passes to $\Tensor{X^6}{1}{}$
which is no longer Ap\'{e}ry in $(\Tensor{R_1}{R_2}{R})/R_2$ since $\Tensor{X^6}{1}{} = (X^2)^3 (\Tensor{1}{1}{})$.
By straightforward computations,
$\Tensor{1}{1}{}$ and $\Tensor{X^3}{1}{}$ remain to be the only Ap\'ery monomials.
Precisely, we notice that  $6n \N \subset 6\N = 2 \N \cap 3 \N $ for all $n \geq 1$ and that
the fiber sum of $R_1$ and $R_2$ over $\kappa[X^{6n}]$ is isomorphic to $\Tensor{R_1}{R_2}{\kappa[X^{6}]}$ for all $n$.
(See also the discussion following Example~\ref{ex:6Nvs12N}.) 
\end{example}

In order to define the flat base change in the category of affine semigroup rings,
Definition \ref{def:BaseChange} assumes that $R_2/R$ is flat.
Even assuming further that $R_1/R$ is flat,
flatness does not always pass to the algebra $\Tensor{R_1}{R_2}{R}$ over $R_2$
nor over $R$. The next example demonstrates these phenomena.

\begin{example}\label{ex:notFlat}
By \cite[Proposition 2.5]{hu-kim:nsa}, adding the square root of a monomial to $R=\kappa[X^3,X^5]$, we obtain a flat algebra.
For instance, $R_1=\kappa[X^{3/2},X^5]$ and $R_2=\kappa[X^3,X^{5/2}]$ are flat over $R$.
By Corollary~\ref{numericalFibersum}, $\Tensor{R_1}{R_2}{R} \simeq \kappa[X^{3/2},X^{5/2}]$.
The Ap\'ery monomials of $R_1/R$ are $1$ and $X^{3/2}$. In this example, they are exactly the  Ap\'ery monomials of $\Tensor{R_1}{R_2}{R}$ over $R_2$ via the isomorphism just mentioned. However  $1$ and $X^{3/2}$ are not linearly independent over $R_2$ since $X^6X^{3/2}=X^{15/2} \cdot 1$. This shows that the flatness of $R_1/R$ does not pass through the flat base change.

Next, we observe that the Ap\'ery monomials of $\Tensor{R_1}{R_2}{R}$ over $R$ are $1$, $X^{3/2}$, $X^{5/2}$ and
$X^{3/2} X^{5/2}$. Re-arranging the above equation, one sees distinct representations
$X^6X^{3/2}=X^5X^{5/2}$. This shows that, by Theorem~\ref{prop1:freeflat},
the algebra $\kappa[X^{3/2},X^{5/2}]$ is not flat over $R$.

The tensor product $\tensor{R_1}{R_2}{R}$ is free over $R_2$ and $R$ respectively due to the flatness of $R_2$  over $R$.
Thus $\tensor{R_1}{R_2}{R}$ and $\Tensor{R_1}{R_2}{R}$ are not isomorphic. 
In fact, $\tensor{R_1}{R_2}{R} \simeq R_2[t]/\langle t^2 - X^3 \rangle$ is not an integral domain,
because $t^{10} - X^{15}$ in the ideal $\langle t^2 - X^3 \rangle$ can be factored as $(t^5 + X^{15/2}) (t^5 - X^{15/2})$ in $R_2[t]$ and neither factor is contained in $\langle t^2 - X^3 \rangle$.
This example in semigroup algebras corresponds to the results of its exponent counterpart in Example~\ref{example:35610}.
\end{example}

\begin{example}\label{ex:folding}
Consider $R_1 = \kappa[X, XY]$ and $R_2= \kappa[XY,Y]$. Let $R :=R_1\cap R_2=\kappa[ XY]$.
Since $R_1$ and $R_2$ are polynomial rings over $R$, they are
flat over $R$.
By Lemma~\ref{lem:05664}, the ring generated by the fibered sum of the corresponding cancellative monoids is
\begin{equation}\label{eq:57167}
\kappa[X, XY]\otimes_{\kappa[XY]}\kappa[XY,Y] \simeq \kappa[X,XY, Z]
\end{equation}
which is already an affine semigroup ring. Thus, it is the fibered sum $\Tensor{R_1}{R_2}{R}$ of $R_1$ and $R_2$ over $R$.
\end{example}

Example~\ref{ex:folding} shows that the fibered sum may lead to an increase of dimension of the rings.
In particular, if $R_1$ or $R_2$ is flat over $R$, then by Lemma~\ref{lem:05664}
the fibered sum is the tensor product or a quotient of the tensor product.
In Section~\ref{sec:gluing}, we review gluing from \cite{ros:psN} which coincides as a special case of fibered sums.
For example, with a change of embeddings,
Example~\ref{ex:folding} can be viewed as the gluing of two polynomial rings.
Geometrically, these are the coordinate rings of two affine planes.
In this case, the fibered sum identifies an affine line from each plane -- as if the fibered sum glues the two planes along a line, and naturally creates a three dimensional affine space.
In terms of semigroups, the semigroups in Example~\ref{ex:folding} are generated by
the monomials in the cones spanned respectively by $X, XY$ and by $XY, Y$.
The latter one is isomorphic to the cone spanned by $XY$ and $Z$ in a three dimensional space.
One thinks that the gluing happens when there is a room for folding up the two cones along the one cone spanned by $XY$
such that the folding creates a crease where the gluing takes place. Thus the resulting folded semigroup generates a three dimensional semigroup ring as in (\ref{eq:57167}).

Such observation reveals the advantage of considering fibered sum and this will become clear in Section~\ref{sec:gluing}.
In fact, as we will see in the discussion following
Example~\ref{ex:normalcurve2}, gluing is not intrinsic, and not every pair of semigroup rings can be glued.
In particular, by \cite[Therom~1.4]{gim-srin:gsNc},
$\kappa[X,XY]$ and $\kappa[XY,Y]$ cannot be glued under any circumstance since they are both {\em nondegenerate} in $\kappa[X,Y]$
(i.e. $\kappa[X,XY]$ and $\kappa[XY,Y]$ have the same dimension as $\kappa[X,Y]$). 
On the other hand, $ \kappa[X,XY]$ and $\kappa[XY, Z] $  can be glued since they are degenerate
in the three dimensional polynomial ring $\kappa[X,Y,Z]$, the smallest space  where they coexist.
Fibered sums always take place regardless initial embeddings,
and therefore, provide one possible intrinsic interpretation for gluing.

\section{Fibered Sum versus Gluing}\label{sec:gluing}

Let $T_1$ and $T_2$ be numerical semigroups in $\N$, both with the greatest
common divisor $1$. Let $a\in T_1$ and $b\in T_2$ be relatively prime numbers
such that $a$ (resp. $b$) is not a minimal generator of $T_1$ (resp. $T_2$).
Under these conditions on $a$ and $b$, the
numerical semigroup $bT_1+aT_2$
is called a {\em gluing} of numerical semigroups (c.f. \cite[p.\,130]{ros-gar:ns}).

The gluing is in fact a fibered sum with respect to a third numerical semigroup to be unveiled here: 
we look at this fact from the perspective of considering them as exponents in the semigroup rings.
Namely,  we consider $R_1:=\kappa[u_1^{T_1}]$ and $R_2:=\kappa[u_2^{T_2}]$
as subrings of the polynomial rings $\kappa[u_1]$ and $\kappa[u_2]$ in variables $u_1$ and $u_2$, respectively.
The aim of gluing is to equate the monomials $u_1^a$ and $u_2^b$.
In fact, if we identify $u_1^{1/b}$ and $u_2^{1/a}$ and  call it $u$,
then the new variable $u$ works for this purpose.
Furthermore,  $R_1 =\kappa[u^{bT_1}]$ and $R_2=\kappa[u^{aT_2}]$ as subsets of $\kappa[u^{bT_1+aT_2}]$
have an intersection $R:=\kappa[u^{ab}]$. In the way of
\[ u_1^a = (u_1^{1/b})^{ba} = u ^{ba} = u ^{ab} =  (u_2^{1/a})^{ab} = u_2^b, \]
$u_1^a$ and $u_2^b$ are ``glued''.
Precisely, the process of gluing identifies the subring $\kappa[u_1^a]$ of 
$\kappa[u_1^{T_1}]$ and $\kappa[u_2^b]$ of $\kappa[u_2^{T_2}]$.
If we change their embeddings all into $\kappa[u]$, then both $u_1^a$ and $u_2^b$ are mapped to $u^{ab}$.
Moreover, $R_1$ and $R_2$ now has a subring $\kappa[u^{ab}]$ in common
over which the resulting gluing
$\kappa[u^{bT_1+aT_2}]$ appears as a universal object.
Returning to the semigroup perspective,
by Corollary~\ref{numericalFibersum}, $aT_1+bT_2$ is exactly the fibered sum of $aT_1$ and $bT_2$ over
$ab \N$ as defined in Definition~\ref{def:fibersum}.
We remark that, as far as fibered sums are concerned, 
the condition stated in the above excluding $a$ and $b$ being minimal generators for gluing is not necessary.
The relatively prime condition on $a$ and $b$ assures that the gluing has  greatest common divisor $1$.
This condition is neither essential.

Our interpretation of gluing imposes vital usage of variables $\bu$ that accommodate the
algebras $R_1/R$ and $R_2/R$ as well as the ring $R_1\widetilde{\otimes}_R R_2$.
In the classical treatment of affine semigroup rings, the role of variables is not brought
out into open. For an affine semigroup $S$, the associated affine semigroup ring is often
denoted by $\kappa[S]$, without referencing variables.
However, coming to actual operations of elements in the ring,
recognition of variables becomes essential in understanding the underline ring structures.

{\bf Gluing.}
The gluing of numerical semigroup rings is generalized
to higher dimension by Rosales~\cite{ros:psN}.
Let $S$ be an affine semigroup. 
We consider the affine semigroup ring $\kappa[\bv^S]$ in multivariable $\bv$.
For $a\in\N$ and new multivariable $\bu=\bv^{1/a}$, we say that $\kappa[\bu^{aS}] = \kappa[ \bv^S]$ has an embedding in $\kappa[\bu]$ {\em finer} than its
embedding in $\kappa[\bv]$,
similarly to the numerical cases described after Example~\ref{ex:2copies}.
Let $S_1=\langle s_1,\ldots,s_n\rangle$ and $S_2=\langle t_1,\ldots,t_m\rangle$ be positive affine semigroups generated by $s_1, .. , s_n$ and $t_1,.., t_m$ in $\N^d$ respectively.
Rosales~\cite{ros:psN} defines that {\em $S_1$ and $S_2$ can be glued} if 
there exist $a,b\in\N$ such that the $\kappa$-algebra homomorphisms
\begin{eqnarray}\label{glue_varphi}
\varphi_1&:&\kappa[X_1,\ldots,X_n]\to\kappa[\bu^{aS_1}],\nonumber\\
\varphi_2&:&\kappa[Y_1,\ldots,Y_m]\to\kappa[\bu^{bS_2}],\label{eq:61627}\\
\varphi&:&\kappa[X_1,\ldots,X_n,Y_1,\ldots,Y_m]\to\kappa[\bu^{aS_1+bS_2}]\nonumber
\end{eqnarray}
given by $\varphi(X_i)=\varphi_1(X_i)=\bu^{as_i}$ and
$\varphi(Y_j)=\varphi_2(Y_j)=\bu^{bt_j}$ satisfying the condition
\begin{equation}\label{glueKernel}
\ker\varphi=\ker\varphi_1+\ker\varphi_2+
\langle X^{i_1}_1\cdots X^{i_n}_n-Y^{j_1}_1\cdots Y^{j_m}_m\rangle
\end{equation}
for some monomials $X^{i_1}_1\cdots X^{i_n}_n$ and $Y^{j_1}_1\cdots Y^{j_m}_m$.
In our perception, what Rosales means by saying that {\em $S_1$ and $S_2$ can be glued} actually means that
{\em $\kappa[\bu^{S_1}]$ and $\kappa[\bu^{S_2}]$ can be glued along a polynomial ring in one variable in some finer embedding}.

The next example illustrates the above description.

\begin{example}\label{ex:normalcurve2}
Consider the affine semigroups $S_1=\langle (2,1,0),(1,1,1),(0,1,2)\rangle$ and
$S_2=\langle (1,2,0),(1,1,1),(1,0,2)\rangle$ in $\N^3$. We use $x$, $y$, $z$ for variables in $\bf u$.
Then
$R_1=\kappa[\bu^{S_1}] = \kappa[ x^2y, xyz, yz^2]$ and
$R_2=\kappa[\bu^{S_2}] = \kappa[xy^2, xyz, xz^2]$.
In this example, $m=n=3$. By taking $a=b=1$,
$\kappa[\bu^{aS_1+bS_2}] = \kappa[ x^2y, yz^2, xyz, xy^2, xz^2]$.
The above ring homomorphisms $\varphi_1$ maps $X_1, X_2, X_3$ to $x^2y, xyz, yz$, 
and similarly for $Y_1, Y_2$, and $Y_3$. 
Lemma~\ref{lem:gluingCond} will show $\ker\varphi = \ker \varphi_1 + \ker \varphi_2+ \langle X_2-Y_2\rangle$,
which means only $xyz$ from $R_1$ and $R_2$ are identified.
Therefore, $R_1$ and $R_2$ can be glued 
along $\kappa[xyz]$
to obtain $\kappa[ x^2y, yz^2, xyz, xy^2, xz^2]$.
This is done without identifying powers of other monomials. 
\end{example}

Next, we discuss the connections between the fibered sum and the gluing just described.
Consider $R'=R_1'=R_2' = \kappa[u^2, uv, v^2]$ and the corresponding homomorphisms 
as in (\ref{eq:61627}), but denoted here by $\varphi_1'$, $\varphi_2'$, and $\varphi'$,
with $a=b=1$.
Besides elements of $\ker \varphi_1'$ and $\ker \varphi_2'$,
the kernel of $\varphi'$ is also generated by $X_1-Y_1$, $X_2-Y_2$ and $X_3-Y_3$.
The semigroup $S'$ that gives rise to $R_1'$ and $R_2'$  is generated by the columns in the matrix
\[
M=\begin{pmatrix} 2 & 1 & 0 \\ 0 & 1 & 2 \end{pmatrix}.
\]
These semigroups cannot be realized by any choices of positive integers $a$ and $b$
to reduce the generating set of $\ker \varphi'$ by adding only one binomial to $\ker \varphi_1' + \ker \varphi_2'$.
Therefore,  $R_1'$ and $R_2'$ cannot be glued.
This is a nondegenerate case in  Gimenez and Srinivasan \cite{gim-srin:gsNc} 
and is supported by Theorem~1.4 of the same paper.

On the other hand, one observes that $R_1' = R_2'$ is isomorphic to $R_1$ and $R_2$ in
Example~\ref{ex:normalcurve2} where a gluing of $R_1$ and $R_2$ takes place.
This shows that gluing is not an intrinsic definition.
We propose a remedy to resolve such situation in the following.

In Example~\ref{ex:normalcurve2},  $S_1$ and $S_2$ are indeed the semigroups in $\N^3$
generated by the columns in the matrices
\[
M_1= \begin{pmatrix} 2 & 1 & 0 \\ 1 & 1 & 1 \\ 0 & 1 & 2 \end{pmatrix}
\text{ and }
M_2 = \begin{pmatrix} 1 & 1 & 1 \\ 2 & 1 & 0 \\ 0 & 1 & 2 \end{pmatrix},
\]
respectively.
Instead of a scalar multiple as defined in the gluing, we suggest a matrix transformation on the generating set.
For instance,
let
\[
A = \begin{pmatrix} 1 & 0 \\ 1/2  & 1/2 \\ 0 & 1 \end{pmatrix} \text{ and }
B = \begin{pmatrix} 1/2 & 1/2 \\  1 & 0 \\ 0 & 1 \end{pmatrix}.
\]
Then $AM = M_1$ and $BM=M_2$.
Let $AS'$ denote the semigroup generated by the columns of the matrix $AM$, and similarly $BS'$  for $BM$.
Obviously, now we have $\kappa[ \bu^{AS'}] = \kappa[\bu^{S_1}] =R_1$ and $\kappa[ \bu^{BS'}] = \kappa[\bu^{S_2}] =R_2$, and they can be glued as shown in Example~\ref{ex:normalcurve2}.

We summarize the above comparison of the attempt of gluing $R'$ and itself to gluing $R_1$ and $R_2$.
The semigroup $S'$ is of rank $2$ and embedded in $\N^2$.
This makes $R'$ nondegenerate and so it cannot be glued to itself as stated in \cite[Therom~1.4]{gim-srin:gsNc}.
By linear transformations, the semigroup $S'$ is transferred into two semigroups $S_1$ and $S_2$ in a free group of higher rank. In this way, not only $S_1$ and $S_2$ are degenerate, but they are also separate in a way that
$\gp{(S_1)} \cap \gp{(S_2)}$ is generated by a single element belonging to $S_1\cap S_2$.
The latter condition about the intersection is equivalent to requiring
$\ker \varphi$ to acquire only one more generator in addition to
$\ker \varphi_1$ and $\ker \varphi_2$.
Such an essential condition is proved in \cite[Therom~1.4]{ros:psN}
and recovered in Theorem~\ref{glueEqCond} below using fibered sums.

We first prove a theorem that interprets gluings as fibered sums.


\begin{thm}\label{thm:glueVfibersum}
If positive affine semigroup rings $R_1$ and $R_2$ in the same variables can be
glued along a polynomial ring $R$ in one variable in some finer embedding,
then there are homomorphisms $R\to R_1$ and $R\to R_2$ such that the gluing is isomorphic to
$\tensor{R_1}{R_2}{R}$. Furthermore, $\Tensor{R_1}{R_2}{R}\simeq\tensor{R_1}{R_2}{R}$.
\end{thm}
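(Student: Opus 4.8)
The plan is to unwind the gluing hypothesis into the language of semigroups and their groups of differences, and then invoke the machinery of Sections~\ref{sec:monoid} and \ref{sec:algebra}. Write $R_1 = \kappa[\bu^{S_1}]$ and $R_2 = \kappa[\bu^{S_2}]$ in a common finer embedding in which $R$ is a polynomial ring $\kappa[\bu^{S}]$ with $S \simeq \N$, say $S = \langle \bu^c \rangle$ for some $c$ lying in both $S_1$ and $S_2$ (this is exactly what the glued embedding provides, as in the discussion preceding Example~\ref{ex:normalcurve2}). The first step is to produce the homomorphisms $R \to R_1$ and $R \to R_2$: these are simply the inclusions coming from $S \subset S_1$ and $S \subset S_2$ in the common ambient Laurent ring, so the square with $\tensor{R_1}{R_2}{R}$ at the corner commutes. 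One then needs to identify the glued ring $\kappa[\bu^{S_1 + S_2}]$ with $\tensor{R_1}{R_2}{R}$.

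The key step is to read off from the gluing condition (\ref{glueKernel}) that $\ker\varphi$ is obtained from $\ker\varphi_1 + \ker\varphi_2$ by adjoining a \emph{single} binomial $X^{i_1}_1\cdots X^{i_n}_n - Y^{j_1}_1\cdots Y^{j_m}_m$, and that the common monomial it names is (a power of) the generator $\bu^c$ of $S$; equivalently, $\gp(S_1) \cap \gp(S_2) = \gp(S)$ inside $\gp(S_1 + S_2)$. This is the content I would extract as a lemma (this is precisely the substance of the forthcoming Lemma~\ref{lem:gluingCond}, which I may cite). Granting this, Corollary~\ref{cor:emb} applied with $T = \kappa[\bu^{S_1+S_2}]$, i.e.\ with $T$ the semigroup $S_1 + S_2$, gives $S_1 + S_2 \simeq \fibersum{S_1}{S_2}{S} \simeq \Fibersum{S_1}{S_2}{S}$, so the glued ring is $\kappa[\bw^{\fibersum{S_1}{S_2}{S}}]$. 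Since $S \simeq \N$ and $R_1$ is positive, Corollary~\ref{cor:flat} tells us $R_1$ (and $R_2$) is flat over $R$; applying Lemma~\ref{lem:05664} with the roles of $R_1, R_2$ swapped, $\kappa[\bw^{\fibersum{S_1}{S_2}{S}}] \simeq \tensor{R_1}{R_2}{R}$ as $R$-algebras. Chaining these isomorphisms identifies the gluing with $\tensor{R_1}{R_2}{R}$.

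For the final assertion $\Tensor{R_1}{R_2}{R} \simeq \tensor{R_1}{R_2}{R}$: by definition (Notation~\ref{notation:otimes+}), $\Tensor{R_1}{R_2}{R} = \kappa[\bw^{\Fibersum{S_1}{S_2}{S}}]$, and we have just seen $\Fibersum{S_1}{S_2}{S} \simeq \fibersum{S_1}{S_2}{S}$ from Corollary~\ref{cor:emb} (the fibered sum is already torsion free because it embeds into the affine semigroup $S_1 + S_2$). Combined with Lemma~\ref{lem:05664} this yields the isomorphism. Alternatively one invokes Theorem~\ref{thm:Fibersumetensor} directly, whose hypotheses — $R_2$ positive and flat over $R$, and $\gp(S_1)\cap\gp(S_2) = \gp(S)$ in $\gp(\Fibersum{S_1}{S_2}{S})$ — are exactly what the gluing condition supplies.

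The main obstacle is the bookkeeping in the first paragraph of the argument: translating ``can be glued along a polynomial ring $R$ in one variable in some finer embedding'' into the clean semigroup statement $S \simeq \N$, $S \subset S_1 \cap S_2$, $\gp(S_1) \cap \gp(S_2) = \gp(S)$. One must check carefully that the finer embedding makes $R_1$, $R_2$ and $R$ simultaneously visible as subrings of one Laurent polynomial ring with $R$ genuinely a one-variable polynomial subring, and that the single-binomial form of $\ker\varphi$ in (\ref{glueKernel}) is equivalent to the group-of-differences condition — that equivalence is really Lemma~\ref{lem:gluingCond}, so the cleanest exposition cites it. Once that dictionary is in place, everything else is an assembly of Corollaries~\ref{cor:emb}, \ref{cor:flat} and Lemma~\ref{lem:05664}.
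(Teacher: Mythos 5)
Your proof is correct, but it takes a genuinely different route from the paper's. The paper establishes the first claim by verifying the pushout universal property directly in the category of $\kappa$-algebras: given compatible maps from $\kappa[\bu^{aS_1}]$ and $\kappa[\bu^{bS_2}]$ to an arbitrary $\kappa$-algebra $\mathcal R$, it presents all three rings as quotients of polynomial rings by their defining ideals and uses the kernel decomposition (\ref{glueKernel}) to show that the maps factor uniquely through $\kappa[\bu^{aS_1+bS_2}]$; this identifies the gluing with $\tensor{R_1}{R_2}{R}$, after which Corollary~\ref{cor:flat} and Lemma~\ref{lem:05664} give the final assertion exactly as you do. You instead first translate the kernel condition into the group-theoretic condition $\gp(aS_1)\cap\gp(bS_2)=\gp(\N w)$ for the glued element $w$ --- which is precisely Lemma~\ref{lem:gluingCond}, stated and proved \emph{after} this theorem in the paper but logically independent of it (its proof uses only Lemmas~\ref{lem:emb} and \ref{lem:05664} and Corollary~\ref{cor:flat}), so citing it forward creates no circularity --- and then assemble Corollary~\ref{cor:emb}, Corollary~\ref{cor:flat} and Lemma~\ref{lem:05664}. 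Your route is more modular and pushes all the work down to the semigroup level, at the cost of leaning on the nontrivial equivalence in Lemma~\ref{lem:gluingCond}; the paper's route is self-contained at this point of the exposition and makes explicit \emph{why} the gluing is universal. Two small remarks: no swap of the roles of $R_1$ and $R_2$ is needed when invoking Lemma~\ref{lem:05664}, since both are positive and flat over the one-variable polynomial ring $R$ by Corollary~\ref{cor:flat}; and you should phrase everything in terms of the rescaled semigroups $aS_1$ and $bS_2$ supplied by the finer embedding, since the group condition need not hold for $S_1$ and $S_2$ themselves.
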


\begin{proof}
Denote $S_1=\log_\bu R_1$ and $S_2=\log_\bu R_2$. Let
$i_1,\ldots,i_n,j_1,\cdots,j_m$ and $a,b$ be integers as in the definition of the gluing as described above.
Define two homomorphisms $\N\to S_1$ and $\N\to S_2$
by sending $1$ to $i_1s_1+\cdots+i_ns_n$ and $j_1t_1+\cdots+j_mt_m$,
respectively. Define $S_1\to a S_1+ b S_2$ and $S_2\to a S_1+ b S_2$ by
multiplying $a$ and $b$ to $S_1$ an $S_2$, respectively.
The images of $1$ from $\N$ via the compositions of the above homomorphisms agree in $aS_1+bS_2$ by the gluing hypothesis on $S_1$ and $S_2$. These homomorphisms of affine
semigroups induce a commutative diagram
\[
\xymatrix{
\kappa[\bu^{S_1}] \ar[r]& \kappa[\bu^{aS_1+bS_2}] \\
\kappa[\bu^{\N}]   \ar[u]\ar[r]   &  \kappa[\bu^{S_2}] \ar[u]
}
\]
of affine semigroup rings.

Let $\mathcal R$ be an arbitrary $\kappa$-algebra.
Assume that the outer layer of the following diagram with $\mathcal R$ at the northeastern corner commutes.
\[
\xymatrix{
&& \mathcal R\\
\kappa[\bu^{S_1}] \simeq \kappa[\bu^{aS_1}] \ar[r]\ar@/^1pc/[rru]& \kappa[\bu^{aS_1+bS_2} ] \ar@{-->}[ru]\\
\kappa[\bu^{\N}] \ar[u]\ar[r] & \kappa[\bu^{S_2}] \simeq \kappa[\bu^{bS_2}] \ar[u]\ar@/_1pc/[ruu]  }
\]
By realizing the affine semigroup rings as quotients of polynomial rings modulo their corresponding defining ideals in the definition of gluing,
\[ \begin{array}{l}
\kappa[\bu^{S_1}] \simeq \kappa[\bu^{a S_1}] \simeq \kappa[X_1, \dots, X_n]/\ker \varphi_1 \\
\kappa[\bu^{S_2}] \simeq \kappa[\bu^{b S_2}] \simeq \kappa[Y_1, \dots, Y_m]/\ker \varphi_2 \\
\kappa[\bu^{aS_1+bS_2}] \simeq \kappa[X_1, \dots, X_n, Y_1, \cdots, Y_m]/ \ker \varphi,
\end{array} \]
we see that the images of $X_i$ and $Y_j$ in $\mathcal R$ from $\kappa[\bu^{aS_1}]$ and $\kappa[\bu^{bS_2}]$
naturally provide assignments of those variables in $\kappa[X_1, \dots, X_n, Y_1, \dots, Y_m]$.
While $\ker \varphi = \ker \varphi_1 + \ker \varphi_2 + \langle X^{i_1}_1\cdots X^{i_n}_n-Y^{j_1}_1\cdots Y^{j_m}_m\rangle$,
we notice that the images of $X^{i_1}_1\cdots X^{i_n}_n$ and $Y^{j_1}_1\cdots Y^{j_m}_m$ agree
for they are the image of $\bu$ from $\kappa[\bu^{\N}]$.
Thus both $\kappa$-algebra homomorphisms from $\kappa[\bu^{S_1}]$ and $\kappa[\bu^{S_2}]$ to $\mathcal R$
factor through $\kappa[\bu^{aS_1 + bS_2} ]$.
Such induced map must be unique by the desired commutativity of the resulting triangular diagrams.

The above work observes that $\kappa[\bu^{aS_1 + bS_2} ]$ satisfies the universal property
in the category of $\kappa$-algebras and concludes that
the gluing $\kappa[\bu^{aS_1+bS_2}]$ is isomorphic to $\tensor{R_1}{R_2}{R}$,
where $R:=\kappa[\bu^{\N}]$. This proves the first statement of the proposition

Since the algebra $R_2/R$ is flat, by Lemma~\ref{lem:05664},
\[
\kappa[\bu^{aS_1+bS_2}] \simeq \tensor{R_1}{R_2}{R}\simeq\kappa[\bu^{\fibersum{S_1}{S_2}{\N}}].
\]
This shows also that $\kappa[\bu^{\fibersum{S_1}{S_2}{\N}}]$ is an affine semigroup ring, so
the exponent $\fibersum{S_1}{S_2}{\N}$ must be torsion free which implies that it is isomorphic to $\Fibersum{S_1}{S_2}{\N}$.
Hence,
$\Tensor{R_1}{R_2}{R}\simeq\tensor{R_1}{R_2}{R}$.
\end{proof}

The next theorem recovers an equivalent condition for the existence of the
gluing as in \cite[Theorem 1.4]{ros:psN}.

\begin{thm}\label{glueEqCond}
Positive affine semigroups $S_1$ and $S_2$ in $\N^d$ can be glued
if and only if
there are $a,b \in \N$ and a non-zero element in $aS_1\cap bS_2$ generating $\gp(aS_1)\cap\gp(bS_2)$
in $\Z^d$.
\end{thm}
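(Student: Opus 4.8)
The plan is to prove both directions by translating the gluing condition (\ref{glueKernel}) into statements about the fibered sum, using Theorem~\ref{thm:glueVfibersum} and Lemma~\ref{lem:emb}. For the forward direction, suppose $S_1$ and $S_2$ can be glued, so there are $a,b\in\N$ and monomials $X_1^{i_1}\cdots X_n^{i_n}$, $Y_1^{j_1}\cdots Y_m^{j_m}$ such that (\ref{glueKernel}) holds. Set $s=i_1s_1+\cdots+i_ns_n\in S_1$ and $t=j_1t_1+\cdots+j_mt_m\in S_2$; then $as\in aS_1$ and $bt\in bS_2$ are identified in $\kappa[\bu^{aS_1+bS_2}]$, i.e.\ $as=bt$ as elements of $\N^d$, giving a common nonzero element $e:=as=bt\in aS_1\cap bS_2$. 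The content of the gluing relation is that modulo $\ker\varphi_1+\ker\varphi_2$ the \emph{only} extra relation needed is the single binomial $X^{i}-Y^{j}$. By Theorem~\ref{thm:glueVfibersum} the gluing is $\widetilde{\otimes}_R R_2$ with $R=\kappa[\bu^\N]$ mapping $1\mapsto s$ and $1\mapsto t$; equivalently $\kappa[\bu^{aS_1+bS_2}]\simeq\kappa[\bu^{\fibersum{aS_1}{aS_2}{\N}}]$ via Lemma~\ref{lem:05664}, where the copy of $\N$ in $aS_1$ is $\langle e\rangle$ and in $bS_2$ is $\langle e\rangle$. Since $\kappa[\bu^{aS_1+bS_2}]$ is realized as a subring of $\kappa[\bu]$ on which the canonical map from the fibered sum is an isomorphism onto $aS_1+bS_2\subset\N^d$, Lemma~\ref{lem:emb} applied with $T=\N^d$, $S_1\leftarrow aS_1$, $S_2\leftarrow bS_2$, $S\leftarrow\langle e\rangle$ yields $\gp(aS_1)\cap\gp(bS_2)=\gp(\langle e\rangle)=\Z e$, which is exactly the assertion that $e$ generates $\gp(aS_1)\cap\gp(bS_2)$ in $\Z^d$.

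For the converse, suppose $a,b\in\N$ and $0\neq e\in aS_1\cap bS_2$ with $\gp(aS_1)\cap\gp(bS_2)=\Z e$. Write $e=as=bt$ for some $s\in S_1$, $t\in S_2$ (possible since $e\in aS_1$ and $e\in bS_2$), and express $s=\sum i_\ell s_\ell$, $t=\sum j_\ell t_\ell$ in terms of the given generators; these exponent vectors produce the candidate monomials $X^i$ and $Y^j$. Define $\N\to S_1$, $\N\to S_2$ by $1\mapsto s$, $1\mapsto t$, so that the composites into $aS_1+bS_2$ both send $1\mapsto e$. Since $\langle e\rangle\simeq\N$ is numerical, Corollary~\ref{cor:flat} gives that $\kappa[\bu^{aS_2}]$ is flat over $\kappa[\bu^{\langle e\rangle}]$, so Lemma~\ref{lem:05664} identifies $\kappa[\bu^{\fibersum{aS_1}{aS_2}{\N}}]$ with $\tensor{\kappa[\bu^{aS_1}]}{\kappa[\bu^{aS_2}]}{R}$, $R=\kappa[\bu^{\langle e\rangle}]$. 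The hypothesis $\gp(aS_1)\cap\gp(bS_2)=\Z e=\gp(\langle e\rangle)$ is precisely condition (i) of Lemma~\ref{lem:emb} inside $T=\N^d$, so condition (ii) gives that the canonical map $\fibersum{aS_1}{aS_2}{\N}\to\N^d$ is injective with image $aS_1+bS_2$. Hence $\kappa[\bu^{aS_1+bS_2}]\simeq\tensor{\kappa[\bu^{aS_1}]}{\kappa[\bu^{aS_2}]}{R}$, and the universal property of the tensor product shows that the presentation ideal of this ring is $\ker\varphi_1+\ker\varphi_2+\langle X^i-Y^j\rangle$, which is (\ref{glueKernel}). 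Thus $S_1$ and $S_2$ are glued in the sense of Rosales.

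The main obstacle I anticipate is the bookkeeping in matching Rosales' purely ideal-theoretic condition (\ref{glueKernel}) with the semigroup-theoretic group intersection condition: one must carefully verify that ``$\ker\varphi$ needs exactly one additional binomial generator over $\ker\varphi_1+\ker\varphi_2$'' is equivalent to ``$\gp(aS_1)\cap\gp(bS_2)$ is rank one, generated by an element of $aS_1\cap bS_2$.'' This is where Lemma~\ref{lem:emb} does the real work, since it is exactly the dictionary between the fibered-sum-being-a-subsemigroup-of-$T$ condition and the equality of groups of differences; the delicate point is checking that when $\gp(aS_1)\cap\gp(bS_2)$ is strictly larger than $\Z e$ for every choice of common element $e$, the induced map on semigroup rings acquires torsion or extra relations and therefore cannot be cut out by a single binomial — this direction should follow from Proposition~\ref{cor:6964} and the construction of $\Fibersum{S_1}{S_2}{S}$ as a nontrivial torsion quotient, but spelling out that a non-torsion-free fibered sum cannot be presented with only one extra binomial requires a short argument about the structure of the defining ideal. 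A secondary technical point is ensuring that $e$ can always be chosen so that $\langle e\rangle$ sits inside both $aS_1$ and $bS_2$ and maps consistently; this is automatic from $e\in aS_1\cap bS_2$, but one should confirm the two embeddings of $\N$ really do agree on the nose in $aS_1+bS_2$, which is the content of $e=as=bt$.
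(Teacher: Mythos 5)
Your argument is correct and follows essentially the same route as the paper: the paper reduces the theorem to its Lemma~\ref{lem:gluingCond} applied to $aS_1$ and $bS_2$, and your proof simply re-derives the content of that lemma inline, using the same ingredients (Lemma~\ref{lem:emb} as the dictionary between injectivity of $\fibersum{aS_1}{bS_2}{\langle e\rangle}\to\N^d$ and the group-intersection condition, Corollary~\ref{cor:flat} and Lemma~\ref{lem:05664} to identify the fibered sum with the tensor product over $\kappa[\bu^{\langle e\rangle}]$, and the one-binomial presentation of that tensor product). The worry in your final paragraph is already resolved by your own argument — the tensor product always has presentation ideal $\ker\varphi_1+\ker\varphi_2+\langle X^i-Y^j\rangle$, and the only question is whether the map onto $\kappa[\bu^{aS_1+bS_2}]$ is injective, which is exactly what Lemma~\ref{lem:emb} settles.
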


In fact, Theorem~\ref{glueEqCond} follows a special case when the gluing of $\kappa[\bu^{S_1}]$ and
$\kappa[\bu^{S_2}]$ takes place without the need of any scalar modification and is plainly $\kappa[\bu^{S_1+S_2}]$.
In Lemma~\ref{lem:gluingCond}, we further observe that the condition (\ref{glueKernel}) on the kernel for
$\kappa[\bu^{S_1+S_2}]$ to be the gluing of $\kappa[\bu^{S_1}]$ and $\kappa[\bu^{S_2}]$
in $\kappa[\bu]$ is equivalent to a condition on the groups generated by $S_1$ and $S_2$.
We begin proving this observation using a key Lemma~\ref{lem:emb}.

\begin{lem}\label{lem:gluingCond}
Let $S_1=\langle s_1,\ldots,s_n\rangle$ and $S_2=\langle t_1,\ldots,t_m\rangle$ be affine semigroups in $\N^d$,
and consider the $\kappa$-algebra homomorphisms
\begin{eqnarray*}
\varphi_1&:&\kappa[X_1,\ldots,X_n]\to\kappa[\bu^{S_1}],\\
\varphi_2&:&\kappa[Y_1,\ldots,Y_m]\to\kappa[\bu^{S_2}],\\
\varphi&:&\kappa[X_1,\ldots,X_n,Y_1,\ldots,Y_m]\to\kappa[\bu^{S_1+S_2}]
\end{eqnarray*}
given by $\varphi(X_i)=\varphi_1(X_i)=\bu^{s_i}$ and
$\varphi(Y_j)=\varphi_2(Y_j)=\bu^{t_j}$.
For any nonzero element $w = i_1s_1 + \cdots + i_ns_n = j_1t_1 + \cdots + j_mt_m$ in $S_1 \cap S_2$,
the following conditions are equivalent.
\begin{enumerate}[\em (i)]
\item $\ker\varphi=\ker\varphi_1+\ker\varphi_2+
\langle X^{i_1}_1\cdots X^{i_n}_n-Y^{j_1}_1\cdots Y^{j_m}_m\rangle$.
\item $\gp(S_1) \cap \gp(S_2) = \gp(S)$, where $S = \N w$.
\end{enumerate}
\end{lem}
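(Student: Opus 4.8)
\emph{Proof plan.} The plan is to transfer the whole statement to the algebra side, recognize both ideals through quotient descriptions already available, and then read off the equivalence from Lemma~\ref{lem:emb}. Write $P=\kappa[X_1,\dots,X_n,Y_1,\dots,Y_m]$, set $X^{\mathbf i}=X_1^{i_1}\cdots X_n^{i_n}$ and $Y^{\mathbf j}=Y_1^{j_1}\cdots Y_m^{j_m}$, and put $J=\ker\varphi_1+\ker\varphi_2+\langle X^{\mathbf i}-Y^{\mathbf j}\rangle$. Since $\varphi$ restricts to $\varphi_1$ on the $X_k$ and to $\varphi_2$ on the $Y_l$, and $\varphi(X^{\mathbf i})=\bu^w=\varphi(Y^{\mathbf j})$, the inclusion $J\subseteq\ker\varphi$ holds unconditionally. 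Thus condition~(i) is exactly the assertion that the canonical surjection $P/J\twoheadrightarrow P/\ker\varphi$ is an isomorphism, and the task becomes identifying the two quotients.

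First, $P/\ker\varphi\simeq\kappa[\bu^{S_1+S_2}]$ by definition of $\varphi$. For $P/J$, one has $P/(\ker\varphi_1+\ker\varphi_2)\simeq\kappa[\bu^{S_1}]\otimes_\kappa\kappa[\bv^{S_2}]$ after relabeling the $Y$-block variables as $\bv$; under this isomorphism $X^{\mathbf i}\mapsto\bu^w\otimes 1$ and $Y^{\mathbf j}\mapsto 1\otimes\bv^w$, so killing $X^{\mathbf i}-Y^{\mathbf j}$ turns the coefficientwise tensor product into the relative tensor product $R_1\otimes_R R_2$, where $R_1=\kappa[\bu^{S_1}]$, $R_2=\kappa[\bv^{S_2}]$, and $R=\kappa[\bu^{\N w}]\simeq\kappa[t]$ maps into $R_1$ via $t\mapsto\bu^w$ and into $R_2$ via $t\mapsto\bv^w$. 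Here $\N w\simeq\N$ because $w\neq 0$ and the ambient semigroups are torsion free, and $\N w\subseteq S_1\cap S_2$. Since $S_2\subseteq\N^d$ is positive, Corollary~\ref{cor:flat} shows $R_2$ is flat over $R$, so Lemma~\ref{lem:05664} applies and gives $P/J\simeq R_1\otimes_R R_2\simeq\kappa[\bw^{\fibersum{S_1}{S_2}{\N w}}]$, with $\bw^{\fibersum{s_1}{s_2}{}}$ corresponding to $\bu^{s_1}\otimes\bv^{s_2}$.

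Tracking the images of the variables, the surjection $P/J\to P/\ker\varphi$ is then the $\kappa$-algebra map $\kappa[\bw^{\fibersum{S_1}{S_2}{\N w}}]\to\kappa[\bu^{S_1+S_2}]$ induced by the monoid homomorphism $g\colon\fibersum{S_1}{S_2}{\N w}\to S_1+S_2$, $\fibersum{s_1}{s_2}{}\mapsto s_1+s_2$. This $g$ is always surjective, and for monoid algebras over a field the induced ring map is injective precisely when the underlying monoid homomorphism is; hence condition~(i) holds if and only if $g$ is injective. Finally, applying Lemma~\ref{lem:emb} with $T=S_1+S_2\subseteq\Z^d$ and $S=\N w\subseteq S_1\cap S_2$, the injectivity of $g$ is equivalent to $\gp(S_1)\cap\gp(S_2)=\gp(\N w)$ inside $\gp(T)=\gp(S_1)+\gp(S_2)$, which is exactly condition~(ii).

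The depth of the argument lies entirely in the cited results; the actual work is bookkeeping. The step I expect to be fussiest is confirming that $P/(\ker\varphi_1+\ker\varphi_2)$ and $P/J$ match the coefficientwise and the relative tensor product on the nose—so that exactly one relation $\bu^w\otimes 1-1\otimes\bv^w$ is imposed, and no more—and that the resulting surjection coincides with $\kappa[g]$ under the isomorphism furnished by the proof of Lemma~\ref{lem:05664}. The only genuinely necessary sanity check beyond that is that $\N w$ is a faithful copy of $\N$ common to both $S_1$ and $S_2$, which uses only $w\neq 0$ and torsion-freeness.
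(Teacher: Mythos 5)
Your proposal is correct and follows essentially the same route as the paper: both identify $P/J$ with $\kappa[\bw^{\fibersum{S_1}{S_2}{\N w}}]$ via Corollary~\ref{cor:flat} and Lemma~\ref{lem:05664}, recognize the surjection onto $\kappa[\bu^{S_1+S_2}]$ as the map induced by the canonical monoid homomorphism $\fibersum{S_1}{S_2}{\N w}\to S_1+S_2$, and conclude with Lemma~\ref{lem:emb}. The only cosmetic difference is that the paper establishes $\ker\widetilde{\varphi}=J$ by explicitly constructing mutually inverse maps $\phi$ and $\psi$, whereas you use the standard presentation of the relative tensor product as a quotient of the coefficientwise one; both are valid.
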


\begin{proof}
First note that since $\fibersum{S_1}{S_2}{S}$ is a fibered sum, there exists a
canonical map $\fibersum{S_1}{S_2}{S}\to S_1+S_2$ by the universal property
and it induces a $\kappa$-algebra homomorphism
$\eta : \kappa[\bu^{\fibersum{S_1}{S_2}{S}}] \to \kappa[\bu^{S_1+S_2}]$.
Consider the $\kappa$-algebra homomorphism
\[
\widetilde{\varphi} : \kappa[X_1,\ldots,X_n,Y_1,\ldots,Y_m] \to \kappa[\bu^{\fibersum{S_1}{S_2}{S}}]
\]
given by $\widetilde{\varphi}(X_i)=\bu^{\fibersum{s_i}{0}{}}$ and
$\widetilde{\varphi}(Y_j)=\bu^{\fibersum{0}{t_j}{}}$.
Then $\varphi_1$, $\varphi_2$, $\varphi$, and $\widetilde{\varphi}$ are
epimorphisms, and we have the commutative diagram
\[
\xymatrix{
\kappa[X_1, \ldots, X_n, Y_1, \ldots, Y_m]  \ar[rr]^{\qquad\qquad \widetilde{\varphi}} \ar[rrd]_{\varphi} & & \kappa[\bu^{\fibersum{S_1}{S_2}{S}}]
\ar[d]^{\eta} \\
& & \kappa[\bu^{S_1+S_2}] .
}
\]
Next we consider the following ideal of $\kappa[X_1, \ldots, X_n, Y_1, \ldots, Y_m]$
\[
J = \ker\varphi_1 + \ker\varphi_2 + \langle X^{i_1}_1\cdots X^{i_n}_n-Y^{j_1}_1\cdots Y^{j_m}_m \rangle.
\]
Note that the image of $X^{i_1}_1\cdots X^{i_n}_n-Y^{j_1}_1\cdots Y^{j_m}_m$ under $\widetilde{\varphi}$ is
\[
\begin{array}{rl}
& \left(\bu^{\fibersum{s_1}{0}{}}\right)^{i_1} \cdots \left(\bu^{\fibersum{s_n}{0}{}}\right)^{i_n} - \left(\bu^{\fibersum{0}{t_1}{}}\right)^{j_1} \cdots \left(\bu^{\fibersum{0}{t_m}{}}\right)^{j_m}\\
= & \bu^{\fibersum{(i_1s_1+\cdots+i_ns_n)}{0}{}} - \bu^{\fibersum{0}{(j_1t_1+\cdots+j_mt_m)}{}} \\
= & \bu^{\fibersum{w}{0}{}} - \bu^{\fibersum{0}{w}{}}
\end{array}
\]
which is equal to $0$ since $w \in S$ and so $\fibersum{w}{0}{} = \fibersum{0}{w}{}$.
It is now clear that $J$ is contained in $\ker \widetilde{\varphi}$ and so $\widetilde{\varphi}$ induces a $\kappa$-algebra homomorphism
\[
\psi : \kappa[X_1, \ldots, X_n, Y_1, \ldots, Y_m]/J \to \kappa[\bu^{\fibersum{S_1}{S_2}{S}}]
\]
with $\ker \psi = \ker \widetilde{\varphi}/J$.
On the other hand, note that by Corollary~\ref{cor:flat} and Lemma~\ref{lem:05664},
we have an isomorphism
\[
\iota: \kappa[\bu^{\fibersum{S_1}{S_2}{S}}] \xrightarrow{\,\,\,\sim\,\,\,}
\tensor{\kappa[\bu^{S_1}]}{\kappa[\bu^{S_2}]}{\kappa[\bu^S]}.
\]
And also, due to the hypothesis that $S=\N w$ for $w \in S_1 \cap S_2$,  and that
$J$ contains $X^{i_1}_1\cdots X^{i_n}_n-Y^{j_1}_1\cdots Y^{j_m}_m$,
the following diagram commutes
\[
\xymatrix{
\kappa[\bu^{S_1}]  & \hspace{-16mm} \simeq  & \hspace{-16mm}\dfrac{\kappa[X_1, \ldots, X_n]}{\ker\varphi_1}  \ar[rrr]  & & &
\dfrac{\kappa[X_1, \ldots, X_n,Y_1,\ldots,Y_m]}{J} \\
\kappa[\bu^{S}] \ar[u]\ar[rrrrr] && & & & \kappa[\bu^{S_2}] \simeq \dfrac{\kappa[Y_1, \ldots, Y_m]}{\ker\varphi_2} . \ar[u]
}
\]
We will show in the remaining proof that this commutative diagram leads to the desired equivalence.

First, the universal property of tensor products  ensures the existence of the homomorphism
\[
\tensor{\kappa[\bu^{S_1}]}{\kappa[\bu^{S_2}]}{\kappa[\bu^S]} \to
\frac{\kappa[X_1,\ldots,X_n,Y_1,\ldots,Y_m]}{J},
\]
which then induces a homomorphism from an isomorphic $\kappa$-algebra
\[
\phi : \kappa[\bu^{\fibersum{S_1}{S_2}{S}}] \to
\frac{\kappa[X_1,\ldots,X_n,Y_1,\ldots,Y_m]}{J}.
\]
We observe the images of $X_i $ and $Y_j $ in $\kappa[X_1,\ldots,X_n,Y_1,\ldots,Y_m]/J$
via $\phi \circ \psi$
\[
\begin{array}{rl}
 & X_i \mapsto \bu^{\fibersum{s_i}{0}{}} \stackrel{\iota}{\mapsto} \bu^{s_i} \otimes 1  \mapsto X_i  \\
& Y_j  \mapsto \bu^{\fibersum{0}{t_j}{}} \stackrel{\iota}{\mapsto} 1 \otimes \bu^{t_j}   \mapsto Y_j
\end{array}
\]
and the images of $\bu^{\fibersum{s_i}{0}{}}$ and $\bu^{\fibersum{0}{t_j}{}}$ in
$\kappa[\bu^{\fibersum{S_1}{S_2}{S}}] $ via $\psi \circ \phi$
\[
\begin{array}{rl}
& \bu^{\fibersum{s_i}{0}{}} \stackrel{\iota^{\tiny{-1}}}{\longmapsto} \bu^{s_i} \otimes 1 \mapsto X_i  \mapsto \bu^{\fibersum{s_i}{0}{}} \\
& \bu^{\fibersum{0}{t_j}{}} \stackrel{\iota^{\tiny{-1}}}{\longmapsto} 1 \otimes \bu^{t_j} \mapsto Y_j  \mapsto \bu^{\fibersum{0}{t_j}{}}
\end{array}.
\]
This shows that $\phi$ and $\psi$ are the inverse of each other, and so $\psi$ is an isomorphism.  This implies $\ker \widetilde{\varphi} = J$ since $\ker \psi = \ker \widetilde{\varphi}/J$.

Moreover, the previous commutative diagram $\varphi = \eta \circ \widetilde{\varphi}$ shows that $\ker \varphi \supset \ker\widetilde{\varphi} = J$. The equality holds, which means
\[
\ker \varphi =\ker\varphi_1+\ker\varphi_2+
\langle X^{i_1}_1\cdots X^{i_n}_n-Y^{j_1}_1\cdots Y^{j_m}_m \rangle,
\]
if and only if $\eta$ is injective, which, by the construction of $\eta$ in the beginning of the proof,
is equivalent to that the canonical map $\fibersum{S_1}{S_2}{S}\to S_1+S_2$ is one-to-one.
Finally, by Lemma~\ref{lem:emb}, the last condition is equivalent to
$\gp(S_1) \cap \gp(S_2) = \gp(S)$.  This proves the lemma.
\end{proof}

Now we prove Theorem~\ref{glueEqCond}.

\begin{proof}
Assume that $R_1 \simeq \kappa[\bu^{S_1}]$ and $R_2\simeq \kappa[\bu^{S_2}]$ in some $\kappa[\bu]$
can be glued along a polynomial ring in one variable.
Then by definition, there exist positive integers $a$ and $b$ such that their gluing is $\kappa[\bu^{aS_1+ bS_2}]$ up to isomorphisms.
The forward implication follows immediately by applying Lemma~\ref{lem:gluingCond} to $aS_1$ and $bS_2$.

Conversely, if there exist positive integers $a$ and $b$ that satisfy the stated condition for the groups,
then we have condition (ii) in Lemma~\ref{lem:gluingCond} for $aS_1$ and $bS_2$.
Hence, its equivalent condition (i) holds which gives the gluing of $S_1$ and $S_2$.
\end{proof}

The result in Lemma~\ref{lem:gluingCond} implies that the condition of $\gp(aS_1) \cap \gp(bS_2)$ being generated by a single
element $w$ in $aS_1 \cap bS_2$ is a necessary and sufficient condition for gluing to take place.
That is, the existence of such $w$ determines whether or not a gluing is possible.
Furthermore, although gluing happens in the level of semigroups,
it is closely related to the relative positions of the groups generated by them.
In particular if an element in $S_1 \cap S_2$ generates $\gp(S_1) \cap \gp(S_2)$, then it generates $S_1 \cap S_2$,
but the converse does not hold.
For instance, let $S_1 = \langle (1,0), (1,1) \rangle$ and $S_2 = \langle (0,1), (1,1) \rangle$ in $\N^2$.
Then for any positive integers $a$ and $b$, $a S_1 \cap b S_2$ is always generated by one element, but $\gp(aS_1) \cap \gp(bS_2)$ has rank $2$. This reflects the folding intuition described after Example~\ref{ex:folding}. While condition (ii) in Lemma~\ref{lem:gluingCond}
cannot be established by scalar multiples, we see that it may be done by matrix transformations in the discussion following Example~\ref{ex:normalcurve2}.

{\bf Summary.}
We conclude this paper by reiterating our ``relative" point of view.
The idea of gluing is to construct a new affine semigroup ring from two given ones by identifying selected subrings.
We have seen two approaches in this section: from set-theoretic viewpoint or from categorical viewpoint.

The set-theoretic approach is the classical method initiated in Rosales~\cite{ros:psN}, 
in which the part to be identified is a one-dimensional polynomial ring.
To assure that no extra identification occurs outside the polynomial ring,
this classical approach takes into account the defining relations of the given affine semigroup rings and those of the new one.
Precisely, the new affine semigroup ring, if constructible, is restricted to acquire only one extra relation among the generators
in addition to the existing relations in the given affine semigroup rings.

From another standpoint, our approach of taking fibered sum is functorial, where an affine semigroup ring is regarded as an object equipped with morphisms in the category of affine semigroup rings.
An affine semigroup ring containing the two given ones with identified parts fits in a commutative diagram. The desired construction is a diagram, smallest in the sense that all such diagrams emerge from it. 
 (This is as illustrated in the proof of Theorem~\ref{thm:glueVfibersum}.)
So what we seek is a universal object among these diagrams, or in other words, a fibered sum in the category of affine semigroup rings. 

Consideration of fibered sums yields the generality that the parts to be identified can be an arbitrary affine semigroup ring. This potentially widens the range of future studies in the field. 
For example, in the numerical cases, singularities are stable under gluing and this fact can be explained by base changes \cite{hu-kim:nsa}. 
The fibered sum introduced in this paper, thus, calls for the investigation of singular properties of affine semigroup algebras under base changes, among many prospective directions.

\medskip

\noindent {Acknowledgement.} The first author would like to express her gratitude to the Institute of Mathematics of Academia Sinica for their generous support and hospitality during her multiple visits.

\end{document}